\definecolor{darkgreen}{rgb}{0,0.5,0}
\definecolor{darkred}{rgb}{0.7,0,0}
\theoremstyle{plain}
\newtheorem{lemma}{Lemma}[section]
\newtheorem{thm}[lemma]{Theorem}
\newtheorem{prop}[lemma]{Proposition}
\newtheorem{cor}[lemma]{Corollary}
\theoremstyle{definition}
\newtheorem{defn}[lemma]{Definition}
\newtheorem{rmk}[lemma]{Remark}
\def\blbox{\quad \vrule height7.5pt width4.17pt depth0pt}
\newcommand{\cmt}[1]{\opt{draft}{\textcolor[rgb]{0.5,0,0}{
$\LHD$ #1 $\RHD$\marginpar{\blbox}}}}
\numberwithin{equation}{section}
\newcommand{\de}{\delta}
\newcommand{\om}{\omega}
\newcommand{\Om}{\Omega}
\newcommand{\ph}{\phi}
\newcommand{\vph}{\varphi}
\newcommand{\R}{\ensuremath{{\mathbb R}}}
\newcommand{\N}{\ensuremath{{\mathbb N}}}
\newcommand{\Z}{\ensuremath{{\mathbb Z}}}
\newcommand{\C}{\ensuremath{{\mathbb C}}}
\newcommand{\weakto}{\rightharpoonup}
\DeclareMathOperator{\inj}{inj}
\newcommand{\norm}[1]{\Vert#1\Vert}  
\def\osc{\mathop{{\mathrm{osc}}}\limits}
\def\blbox{\quad \vrule height7.5pt width4.17pt depth0pt}
\newcommand{\beq}{\begin{equation}}
\newcommand{\eeq}{\end{equation}}
\newcommand{\beqs}{\begin{equation*}}
\newcommand{\eeqs}{\end{equation*}}
\newcommand{\beqa}{\begin{equation}\begin{aligned}}
\newcommand{\eeqa}{\end{aligned}\end{equation}}
\newcommand{\beqas}{\begin{equation*}\begin{aligned}}
\newcommand{\eeqas}{\end{aligned}\end{equation*}}
\newcommand{\brmk}{\begin{rmk}}
\newcommand{\ermk}{\end{rmk}}
\newcommand{\partref}[1]{\hbox{(\csname @roman\endcsname{\ref{#1}})}}
\newcommand{\half}{\frac{1}{2}}
\newcommand{\atan}{\text{atan}}
\newcommand{\Mneu}{\widetilde{\mathcal{M}}}
\newcommand{\halb}{{\tfrac12}}
\newcommand{\pt}{\partial_t}
\newcommand{\ddt}{\tfrac{d}{dt}}
\newcommand{\M}{\ensuremath{{\mathcal M}}_{-1}}
\newcommand{\A}{\ensuremath{{\mathcal A}}}
\newcommand{\D}{\ensuremath{{\mathcal D}}}
\newcommand{\abs}[1]{{\vert#1\vert} }
\newcommand{\eps}{\varepsilon}
\newcommand{\na}{\nabla}
\newcommand{\Hol}{{\cal H}} 
\newcommand{\Col}{\ensuremath{{\cal C}}}
\newcommand{\thin}{\text{-thin}}
\newcommand{\comp}{\texttt{c}}
\newcommand{\pCpm}{{\partial C_\pm}}
\newcommand{\pC}{{\partial C_0}}
\newcommand{\bit}{\begin{itemize}}
\newcommand{\eit}{\end{itemize}}
\newcommand{\dddelta}{\tfrac{d}{d\delta}\vert_{\delta=0}}
\newcommand{\ddeps}{\tfrac{d}{d\eps}\vert_{\eps=0}}
\newcommand{\la}{\langle}
\newcommand{\ra}{\rangle}
\newcommand{\dist}{\text{dist}}
\newcommand{\supp}{\text{supp}} 
\newcommand{\Leb}{\mathcal{L}}
\newcommand{\nto}{\nrightarrow}
\newcommand{\Hgamma}{H_{\Gamma,*}^1(C_0)}
\newcommand{\Var}{\mathcal{V}}
\newcommand{\loc}{_{\text{loc}}}
\newcommand{\tang}{t_{p_0,\Gamma}}
\newcommand{\Arg}{\text{Arg}}
\title{{\sc
Teichm\"uller harmonic map flow from cylinders
}
\\ 
}
\author{Melanie Rupflin}
\date{\today}
\begin{document}
\maketitle

\begin{abstract}
We define a geometric flow that is designed to change surfaces of cylindrical type spanning two disjoint boundary curves into  
solutions of the Douglas-Plateau problem of finding minimal surfaces with given boundary curves. 
We prove that also in this new setting and for arbitrary initial data, solutions of the Teichm\"uller harmonic map flow exist for all times. 
Furthermore, for solutions for which a three-point-condition does not degenerate as $t\to\infty$,
 we show convergence along a sequence $t_i\to\infty$
 to a critical point of the area given either by a minimal cylinder or by two minimal discs spanning the given boundary curves.
\end{abstract}

\section{Introduction}\label{sect:intro}

Teichm\"uller harmonic map flow, introduced in the joint work \cite{RT} with Peter Topping for closed surfaces, is 
a geometric flow that is designed to change parametrised surfaces into critical points of the area. 
Indeed, for closed surfaces in a non-positively curved target manifold, 
 the flow \textit{always} succeeds in changing, or more generally decomposing, the initial surface into (a union of) branched minimal 
 immersions through globally defined smooth solutions \cite{RT2}.

Here we take a first step to generalize this approach to the problem of flowing surfaces with boundaries to a solution of the Douglas-Plateau problem of finding a minimal surface spanning given boundary curves. 
Namely, given two disjoint, closed $C^3$ Jordan curves $\Gamma_\pm$ in Euclidean space
we investigate how to flow a surface of cylindrical type in order to find a minimal surface spanning the two boundary curves.

As in \cite{RT} this flow will be constructed as a gradient flow of the Dirichlet energy 

$$E(u,g)=\frac12\int_{C_0}\abs{du}_g^2 dv_g$$
considered as a function of two variables: a map $u:C_0\to \R^n$ parametrising the evolving 
surface over a fixed domain, here the cylinder $C_0=[-1,1]\times S^1$, and a Riemannian metric $g$ on the domain. 
 
We remark that if a pair $(u,g)$ is a critical point of $E$ then it is also a critical point of the area, to be more precise either a constant map or a (possibly branched) minimal immersion 
\cite{GOR}.
A key idea of Teichm\"uller harmonic map flow is to consider $E$ on the set of equivalence classes of maps and metrics modulo the symmetries of $E$, compare \cite{RT}. Thus we identify 
\bit
\item $(u,g)\sim(u,\lambda\cdot g)$ for all functions $\lambda:C_0\to\R^+$  due to the conformal invariance of $E$, and
\item $(u,g)\sim (u\circ f,f^* g)$ for diffeomorphisms $f:C_0\to C_0$ that are homotopic to the identity.
\eit

As in  \cite{RT} we shall then define Teichm\"uller harmonic map flow from cylinders as an $L^2$ gradient flow on the resulting set of equivalence classes  
$$\A:=\{(u,g): u:C_0\to \R^n \text{ so that } 
u\vert_{\{\pm1\}\times S^1} \text{ parametrises } \Gamma_\pm, \,g \text{ a metric on } C_0\}/\sim.$$

One important point to be understood in order to truly define such a flow 
is how to define an $L^2$-metric on $\A$ and the closely related question of how to best represent a curve in $\A$ through pairs of maps and metrics.

In the definition of Teichm\"uller harmonic map flow on closed surfaces in \cite{RT}
we chose a canonical representative by asking that $g$ has constant curvature $K_g\equiv 1,0,-1$ (depending on the genus) and that $\norm{\partial_t g}_{L^2}$ is minimal. 
For the resulting $L^2$-gradient flow this means that the symmetries are used to maximally simplify the evolution equation for the metric component in the sense that $g$ only moves by the part of the gradient of $E$ that is 
orthogonal to the action of the symmetries. At the same time, the map component evolves with the full gradient, i.e. the tension.
As a result, cf. \cite{Rexistence},
for closed surfaces  the evolution of the metric turns out to be well controlled as long as $\inj(M,g(t))\nto 0$,
while the map component shows a similar behaviour as the solutions of  
the corresponding flow for fixed metrics, 
i.e. the harmonic map heat flow of Eells-Sampson, which is well understood for closed domain surfaces and maps into general target manifolds.

In the present setting of flowing surfaces with boundary the situation is somewhat different, mainly because our boundary condition is not of Dirichlet-type, 
but only of Plateau-type, i.e. prescribing the boundary values only
up to reparametrisation. As such, even for a fixed metric $g_0$, one cannot expect strong regularity results for the gradient flow of $u\mapsto E(g_0,u)$ unless one imposes a three-point-condition for $u\vert_{\pC}$. 

For maps $u$ parametrised over the disc $(D_1(0),g_{eucl})$ such a gradient flow of maps was introduced and studied by 
Chang and Liu in \cite{CL1,CL2,CL3} who considered both maps into Euclidean space and into Riemannian manifolds.  
The more general case of flowing to discs of prescribed mean curvature (and prescribed Plateau-boundary condition) 
has been considered more recently by Duzaar and Scheven \cite{D-S}. 
They show that an isoperimetric condition on the prescribed mean curvature ensures the existence of global weak solutions
and that these solutions subconverge to a 
disc with the prescribed mean curvature. 
In both cases, the flows are given as equations for only a map
component $u$. This is consistent with our approach as the special structure of the disc makes it unnecessary to also evolve a metric on the domain; namely, 
the moduli space of the disc consists of only one point and one can furthermore pull-back any map by a suitable M\"obius transform to obtain a map that obeys a three-point-condition. Since M\"obiustransforms do not change the conformal structure this means that one can replace $(u,g_{eucl})$ by a representative of the same point of $\A$ 
whose map component satisfies a three-point-condition without having to adapt the metric component at all.

These special features of maps and metrics on the disc 
are not present for any other surface with boundary, though in case of the cylinder the moduli space has a very simple structure as it is one dimensional. But even in this case, the group of conformal diffeomorphisms from $(C_0,g)$ to itself is not sufficiently large to 
impose a three-point-condition for the map component without having to adjust the metric suitably. As 
some kind of restriction on how $u\vert_{\pC}$ parametrises the boundary curves $\Gamma_{\pm}$ is needed to obtain a flow that admits global solutions, we shall thus use the symmetries in a slighly different way than in the case of closed surfaces. 
Namely, we use only most, but not all, symmetries to ensure that the evolution of the metric is regular, while also setting aside  
a number of degrees of freedom ($3$ per boundary curve) to prevent a formation of singularities of the map
at the boundary by imposing a three-point-condition. 

 We remark that
the Douglas-Plateau problem has been considered by many authors and we refer to the books \cite{book-minimal}, \cite{Colding-Minicozzi}, \cite{Jost}, \cite{Struwe-book}
and the references therein for an overview of existing results. What we would like to point out is the well known fact
that while  one can in general not prescribe the topological type of a minimal surface, one always obtains a minimal surface that is parametrised either over the original
domain, for us the cylinder, or over a surface of a simpler topological type, in the present situation two discs.

The paper is organised as follows. 
To begin with, we discuss how to best represent curves in the set of equivalence classes 
$\A$ and consequently give the precise definition of the flow. We then state our main results which 
guarantee the existence of global weak solutions for arbitrary initial data, 
see Theorem \ref{thm1}, as well subconvergence to either a minimal cylinder or to two minimal discs spanning the given 
boundary curves, at least for solutions for which the three-point-condition does not
degenerate, see Theorem \ref{thm2}.
The rest of the paper is then dedicated to the proof of these results. In section \ref{sect:short-time}
we prove short-time existence based on a time-discretisation 
scheme and derive a priori estimates on the map and metric component which are crucial for both the proof of existence and 
of asymptotic convergence. This asymptotic analysis is carried out in section \ref{sect:asympt} but before that, in section 
\ref{sect:long-time}, we establish that solutions exist for all times.

\section{Definition of the flow}

\subsection{Representing a curve in $\A$: Admissible variations}\label{sect:admissible}
As preparation for the definition of Teichm\"uller harmonic map flow on cylinders we discuss ways of representing curves in the set of equivalence classes $\A$ through suitably chosen pairs of 
maps and metrics. We do not claim that our choice is canonical but rather that it is designed for the purpose of obtaining a gradient flow of energy that admits global regular solution.

To begin with, we need to identify a suitable representative of a conformal class $\comp$ of (smooth) metrics on $C_0$.
While one can always consider constant curvature, here flat, metrics with geodesic boundary curves, it turns out that 
this particular representative is in general not the natural one to flow surfaces with boundary towards minimal surfaces. In particular, one would like to avoid the possibility 
that a boundary curve of the domain (on which we after all impose our boundary condition) can shrink to a point and thus be lost.  

For the cylinder we shall thus consider smooth metrics compatible with $\comp$ which have constant curvature $-1$ and for which 
the boundary curves have both the same constant geodesic
curvature. 

We first recall the following standard fact of complex analysis

\begin{lemma}\label{lemma:conf}
To any smooth conformal structure $\comp$ on $C_0$ there exists a unique number $Y>0$ such that $(C_0,\comp)$ is conformally equivalent to 
 ($[-Y,Y]\times S^1,ds^2+d\theta^2)$.
\end{lemma}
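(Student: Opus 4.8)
The plan is to invoke the uniformization theorem for surfaces with boundary. A smooth conformal structure $\comp$ on $C_0$ makes $(C_0,\comp)$ into a compact Riemann surface with boundary, of genus $0$ with two boundary components (an annulus). The classical uniformization of such surfaces says that every such annulus is conformally equivalent to a \emph{unique} straight cylinder $[-Y,Y]\times S^1$ with the flat product metric $ds^2+d\theta^2$, for some $Y>0$; equivalently, to a standard round annulus $\{1\le|z|\le R\}$ in $\C$ with $R=e^{2Y}$ under the map $(s,\theta)\mapsto e^{s+Y+i\theta}$. So the statement is essentially a restatement of this fact, and the work is in recalling why it holds and why $Y$ is uniquely determined.

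First I would establish existence. One route is to double the surface across its boundary: the orientation double $\widehat{C_0}$ of $(C_0,\comp)$ is a closed Riemann surface of genus $1$, carrying an antiholomorphic involution $\tau$ whose fixed-point set is $\partial C_0$. By uniformization of the torus, $\widehat{C_0}\cong \C/(\Z+\Z\omega)$ for some $\omega$ in the upper half-plane, and the involution $\tau$ forces the lattice to be rectangular, i.e. one may take $\omega=iT$ purely imaginary, with the two circles $\{\image z=0\}$ and $\{\image z=T/2\}$ descending to $\partial C_0$. The quotient of $\C/(\Z+\Z iT)$ by $\tau$ is then exactly the flat cylinder $[0,T/2]\times (\R/\Z)$, which after rescaling the $\theta$-variable to have circumference $2\pi$ and recentering is $([-Y,Y]\times S^1, ds^2+d\theta^2)$ with $Y$ a fixed multiple of $T$. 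Alternatively, and perhaps more cleanly, one can simply quote the standard classification of doubly connected plane domains: every such domain with both boundary components nondegenerate is conformally a round annulus, and then transport the product metric back. Either way gives a conformal equivalence $(C_0,\comp)\cong([-Y,Y]\times S^1,ds^2+d\theta^2)$.

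Next I would prove uniqueness of $Y$. Suppose $[-Y_1,Y_1]\times S^1$ and $[-Y_2,Y_2]\times S^1$ (with product metrics) are conformally equivalent. Passing to round annuli, this says $\{1<|z|<R_1\}$ and $\{1<|z|<R_2\}$ are conformally equivalent with $R_j=e^{2Y_j}$, and the standard fact is that the \emph{modulus} $\frac{1}{2\pi}\log R_j$ (equivalently $Y_j/\pi$) is a conformal invariant of the annulus; hence $R_1=R_2$ and $Y_1=Y_2$. The cleanest self-contained argument: a conformal equivalence lifts to the universal covers (strips), must send boundary components to boundary components, and after composing with $z\mapsto 1/z$ if needed extends by Schwarz reflection to a conformal automorphism of $\C^*$ fixing $0$ and $\infty$, hence a rotation-dilation $z\mapsto cz$; comparing the inner and outer radii forces $R_1=R_2$. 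This is the one place requiring a genuine (though standard) argument rather than bookkeeping.

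The main obstacle, such as it is, is purely expository: this lemma is labelled a "standard fact of complex analysis," so the expected proof is a one- or two-line citation to uniformization for bordered surfaces (or the classification of doubly connected domains) plus the conformal invariance of the modulus of an annulus, rather than a from-scratch construction. If a self-contained proof is wanted, the doubling argument above is the most efficient, and its only subtle point is checking that the antiholomorphic involution on the doubled torus is conjugate to complex conjugation on a rectangular lattice — which follows because $\tau$ has nonempty fixed-point set and acts freely on the complement, pinning down the lattice up to the rectangular normalization.
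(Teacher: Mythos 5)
Your proof is correct, and since the paper gives no proof of its own — Lemma \ref{lemma:conf} is explicitly introduced as ``the following standard fact of complex analysis'' with no argument supplied — there is nothing to compare against. Both the doubling argument and the classification of doubly connected domains are standard correct routes to existence, and the Schwarz-reflection argument for uniqueness of the modulus (hence of $Y$) is the usual one; your expository remark that the intended justification is a citation is exactly right.
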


On such a cylinder $([-Y,Y]\times S^1,ds^2+d\theta^2)$ we can then use the following hyperbolic metrics whose structure is well known from the Collar lemma \cite{randol}

\begin{lemma}
\label{lemma:metric-cyl-1}
On $([-Y,Y]\times S^1,ds^2+d\theta^2)$ there is a one parameter family of \textit{collar metrics} 
$$g_{\ell}=\rho_\ell(s)^2(ds^2+d\theta^2)$$
where 
$$\rho_\ell(s)=\frac{\ell}{2\pi\cos(\frac{\ell s}{2\pi})}, \quad \ell\in(0,L_0(Y)), \quad L_0:=\frac{\pi^2}{Y},$$ which are all hyperbolic and whose boundary curves have the same constant geodesic curvature $\kappa\equiv \kappa_{\ell,Y}$.
\end{lemma}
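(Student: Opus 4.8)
The statement is essentially a calculation verifying that the conformal factor $\rho_\ell(s)^2(ds^2+d\theta^2)$ has constant Gauss curvature $-1$, together with the observation that the boundary curves $\{s=\pm Y\}$ have equal constant geodesic curvature. The plan is to proceed in three steps: first verify hyperbolicity by the standard formula for the curvature of a conformally flat metric; second, compute the geodesic curvature of the boundary circles and observe it depends only on $|s|=Y$ (hence equal at $s=\pm Y$); third, check the parameter range $\ell\in(0,L_0(Y))$ with $L_0=\pi^2/Y$ is exactly what makes $\rho_\ell$ well-defined and positive on all of $[-Y,Y]$.

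\emph{Step 1 (hyperbolicity).} For a metric of the form $g=\rho^2(ds^2+d\theta^2)$ on a planar domain, the Gauss curvature is $K_g=-\rho^{-2}\Delta\log\rho$ where $\Delta=\partial_s^2+\partial_\theta^2$ is the flat Laplacian. Since $\rho_\ell$ depends on $s$ only, this reduces to $K_g=-\rho_\ell^{-2}(\log\rho_\ell)''$. Writing $\rho_\ell(s)=\tfrac{\ell}{2\pi}\sec(\tfrac{\ell s}{2\pi})$, we have $\log\rho_\ell(s)=\log\tfrac{\ell}{2\pi}+\log\sec(\tfrac{\ell s}{2\pi})$, so $(\log\rho_\ell)'(s)=\tfrac{\ell}{2\pi}\tan(\tfrac{\ell s}{2\pi})$ and $(\log\rho_\ell)''(s)=(\tfrac{\ell}{2\pi})^2\sec^2(\tfrac{\ell s}{2\pi})=\rho_\ell(s)^2$. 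Hence $K_{g_\ell}\equiv-1$, which is the claimed hyperbolicity. (One recognizes $g_\ell$ as the restriction to a sub-cylinder of the standard hyperbolic collar around a closed geodesic of length $\ell$, which is why the hypotheses match the Collar lemma.)

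\emph{Step 2 (geodesic curvature of the boundary).} The boundary components are the curves $s=\pm Y$, parametrised by $\theta$. For a conformally flat metric $\rho^2(ds^2+d\theta^2)$, the geodesic curvature of the coordinate line $s=\text{const}$, with respect to the inward normal, is given by $\kappa_g=\pm\rho^{-1}\partial_s\log\rho=\pm\rho^{-1}(\log\rho)'$ (sign depending on orientation of the normal). From Step 1, $(\log\rho_\ell)'(s)=\tfrac{\ell}{2\pi}\tan(\tfrac{\ell s}{2\pi})$, so on $s=\pm Y$ this equals $\pm\tfrac{\ell}{2\pi}\tan(\tfrac{\ell Y}{2\pi})$, and dividing by $\rho_\ell(\pm Y)=\tfrac{\ell}{2\pi}\sec(\tfrac{\ell Y}{2\pi})$ gives $|\kappa_g|=\sin(\tfrac{\ell Y}{2\pi})$ on both boundary circles. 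Choosing the inward normal on each side absorbs the sign, so the two boundary curves have the same constant geodesic curvature $\kappa_{\ell,Y}=\sin(\tfrac{\ell Y}{2\pi})$; in particular it is constant along each circle since $\rho_\ell$ and its derivative are independent of $\theta$.

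\emph{Step 3 (parameter range).} We need $\rho_\ell(s)>0$ and smooth for all $s\in[-Y,Y]$, i.e.\ $\cos(\tfrac{\ell s}{2\pi})>0$ throughout, which holds iff $\tfrac{\ell|s|}{2\pi}<\tfrac{\pi}{2}$ for all $|s|\le Y$, i.e.\ iff $\ell Y<\pi^2$, i.e.\ $\ell<L_0(Y)=\pi^2/Y$. Conversely for any such $\ell$ the formula gives a smooth positive conformal factor, so the family is parametrised exactly by $\ell\in(0,L_0(Y))$. The main (very mild) obstacle here is simply bookkeeping the orientation conventions so that ``same geodesic curvature'' is literally true rather than true up to sign; none of the computation is deep, as the lemma is a restatement of the classical collar geometry from \cite{randol}.
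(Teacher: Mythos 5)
Your proof is correct. The paper itself offers no proof of Lemma~\ref{lemma:metric-cyl-1}: it states the family $g_\ell$ and remarks only that its structure is ``well known from the Collar lemma \cite{randol}.'' Your direct verification (conformal-curvature formula $K_g=-\rho^{-2}(\log\rho)''$, then the boundary geodesic-curvature computation giving $|\kappa_{\ell,Y}|=\sin(\tfrac{\ell Y}{2\pi})$, then the positivity/regularity constraint $\ell Y<\pi^2$) is exactly the calculation that underlies the cited fact, so your argument supplies what the paper defers to the literature. The only thing worth tightening is the sign bookkeeping in Step~2: with the inward normal on both boundary circles the signed geodesic curvature comes out $-\sin(\tfrac{\ell Y}{2\pi})$ on each (the collar boundary curves the ``same way'' relative to the interior), which still establishes equality of the two curvatures --- the point the lemma actually needs --- but the value $\kappa_{\ell,Y}$ should be quoted with a consistent sign convention rather than left at ``absorbs the sign.''
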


As admissible metrics for our flow we shall thus consider 
\beqa 
\M:=\{f^* g: \, &f:C_0\to [-Y,Y]\times S^1 \text{ smooth diffeomorphism}, \\ 
&g=g_\ell \text{ a collar metric as in Lemma \ref{lemma:metric-cyl-1}}, \,\ell,Y\in (0,\infty)\}.\eeqa

While Lemma \ref{lemma:metric-cyl-1} does not yet give a canonical representative of a conformal class,   
such a choice can be made so that the following splitting  of the tangent space is respected
 
\begin{lemma}\label{lemma:splitting}
  For any $g\in\M$ we have 
  $$T_g\M=\{L_X g: X\in \Gamma(TC_0)\}\oplus \text{Re}(\Hol(g))\oplus span\{\psi_g^2\cdot g\}.$$
where $\text{Re}(\Hol(g)\}$ is $L^2(C_0,g)$-orthogonal to $\{L_X g\}\oplus span\{\psi_g^2\cdot g\}$.
  
  Here $\Hol(g)$ is the real vector space of quadratic differentials that are holomorphic in the interior of 
  $C_0$, continuous upto the boundary and whose traces on $\pC$ are real. 
Furthermore $\Gamma(TC_0)$ stands for the space of smooth vectorfields on $C_0$ which are tangential 
  to $\pC$ on $\pC$ and $\psi_g:C_0\to  \R$ is characterised by
  $$\psi_g^2\cdot g=f^*\big(\tfrac{d}{d\ell}(\rho_\ell^2)(ds^2+d\theta^2)\big) \vert_{\ell=\ell_0}$$
for $g=f^*g_{\ell_0}$, $g_\ell$ the collar metrics of Lemma \ref{lemma:metric-cyl-1}.

\end{lemma}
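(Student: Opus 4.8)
The plan is to reduce everything to the interior geometry of a hyperbolic cylinder with geodesic boundary curves, where one can invoke the standard slice-type decomposition from Teichm\"uller theory, and then handle the one extra parameter (the length $\ell$) together with the boundary regularity by hand. First I would observe that, since all statements are diffeomorphism-invariant, it suffices to prove the claim for $g = g_{\ell_0}$ a collar metric on $([-Y,Y]\times S^1, ds^2+d\theta^2)$: conjugating by $f$ turns $L_X g_{\ell_0}$ into $L_{f^*X}(f^*g_{\ell_0})$, turns holomorphic quadratic differentials into holomorphic quadratic differentials (conformal invariance), sends $\psi_{g_{\ell_0}}^2 g_{\ell_0}$ to $\psi_g^2 g$ by the very definition of $\psi_g$, and preserves the $L^2(C_0,g)$ inner product up to the conformal-weight bookkeeping that makes quadratic-differential/metric pairings conformally invariant. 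So from now on $g=g_{\ell_0}=\rho_{\ell_0}^2(ds^2+d\theta^2)$.

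Next I would set up the three summands concretely. The tangent space $T_g\M$ is parametrised by moving $f$ (giving the Lie-derivative directions $L_X g$, where $X$ is tangential to the boundary so that $f$ stays a diffeomorphism of $C_0$), moving $\ell$ (giving precisely the direction $\tfrac{d}{d\ell}(\rho_\ell^2)|_{\ell=\ell_0}(ds^2+d\theta^2) = \psi_g^2 g$, up to a positive constant), and moving $Y$. The point is that a variation of $Y$ changes the conformal structure, and the infinitesimal change of conformal structure is exactly recorded by the $L^2(C_0,g)$-orthogonal projection onto the transverse-traceless part, which for a hyperbolic surface is $\mathrm{Re}(\Hol(g))$ — here one uses that on $C_0$ with the real-trace boundary condition this space is one-dimensional (matching $\dim \mathcal M = 1$ from Lemma \ref{lemma:conf}), spanned by (a real multiple of) $ds^2 - d\theta^2$ written in the conformal coordinate. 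So the three candidate summands together span $T_g\M$ by a dimension count: infinitesimally, $\mathrm{diff}_0$-directions plus one $\ell$-direction plus one $Y$-direction.

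Then I would prove the orthogonality/directness. Decompose an arbitrary symmetric $2$-tensor $h$ on $(C_0,g)$ into its $g$-trace part $\tfrac12(\tr_g h) g$ and its trace-free part $h_0$; the trace part lies in $\{\lambda g\}$, which contains $\psi_g^2 g$, and it is $L^2$-orthogonal to every trace-free tensor, in particular to $\mathrm{Re}(\Hol(g))$ and to the trace-free part of any $L_X g$. For the trace-free tensors one has the classical $L^2(C_0,g)$-orthogonal splitting into $\delta_g^*$-image (divergence terms, i.e. the trace-free parts of Lie derivatives) and the transverse-traceless kernel of $\delta_g$, which is $\mathrm{Re}(\Hol(g))$ — this is the standard computation that a trace-free divergence-free symmetric tensor on a Riemann surface is the real part of a holomorphic quadratic differential, now carried out on the cylinder with the boundary conditions ``$X$ tangential to $\pC$'' and ``real trace on $\pC$'', which are exactly the adjoint boundary conditions making the integration by parts produce no boundary term. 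Combining: $\{L_X g\} + \mathrm{span}\{\psi_g^2 g\}$ accounts for all divergence directions and all of the trace direction, $\mathrm{Re}(\Hol(g))$ is the orthogonal complement within the trace-free tensors of the divergence directions, and the trace direction is orthogonal to $\mathrm{Re}(\Hol(g))$; hence the sum is direct, $\mathrm{Re}(\Hol(g))$ is $L^2$-orthogonal to the other two, and it equals $T_g\M$ by the dimension count. (The subtle point that $\{L_X g\}$ and $\mathrm{span}\{\psi_g^2 g\}$ need not themselves be mutually orthogonal is fine — only the sum, as a subspace, must be complementary to $\mathrm{Re}(\Hol(g))$, which is what the statement asserts.)

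The main obstacle I expect is the boundary regularity and the boundary terms in the integration by parts: one must check that the Hodge-type / slice decomposition of trace-free symmetric tensors, which is textbook on \emph{closed} surfaces, still holds on the cylinder-with-boundary with the specified function spaces — that the relevant operator $\delta_g$ has closed range with kernel exactly $\mathrm{Re}(\Hol(g))$, and that ``$X$ tangential to $\pC$'' is precisely the boundary condition under which $\delta_g^*$ and $\delta_g$ are formal adjoints with \emph{no} boundary contribution, while ``real trace on $\pC$'' is the natural (Neumann-type) condition cutting out the finite-dimensional cokernel. Verifying that $\Hol(g)$ is genuinely one-dimensional under these conditions (so the count closes) and that the collar metrics' boundary curves having constant geodesic curvature is compatible with — indeed forces — this choice of boundary condition is the technical heart; everything else is conformal bookkeeping and a dimension count.
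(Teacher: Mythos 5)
The paper itself contains no proof of this lemma; it is stated and followed by exactly two remarks, namely that for the cylinder $\Hol(g)$ is one-dimensional and spanned by $dz^2$, $z=s+i\theta$, and that the orthogonality is a consequence of the real part of a holomorphic quadratic differential being trace- and divergence-free. Your proposal fleshes out precisely this approach: reduce to the collar metric by naturality, decompose symmetric $2$-tensors into trace and trace-free parts, use the $\delta_g^*$-image/$\ker\delta_g$ splitting of the trace-free part with the boundary conditions ``$X$ tangential'' and ``$q$ real on $\pC$'' killing the boundary term in the integration by parts, and identify $\ker\delta_g\cap\{\text{trace-free}\}$ with $\mathrm{Re}(\Hol(g))$.

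Where you are vague, and where a genuine gap remains, is in the spanning part of the claim, i.e.\ that the right-hand side equals all of $T_g\M$ and not just a subspace. Your ``dimension count'' (diffeo directions plus one $\ell$-direction plus one $Y$-direction) only closes once you actually know that $\mathrm{Re}(\Hol(g))\subset T_g\M$, equivalently that the transverse-traceless projection of the $Y$-variation is nonzero; saying that this projection ``lands in'' $\mathrm{Re}(\Hol(g))$ is tautological. This is easy to verify — for $g=g_{\ell_0}=\rho^2(ds^2+d\theta^2)$ the $Y$-variation corresponds to $L_Vg$ with $V=s\partial_s$, whose $g$-trace-free part is $\rho^2(ds^2-d\theta^2)$, and
$\int_{C_0}\langle\rho^2(ds^2-d\theta^2),\,ds^2-d\theta^2\rangle_g\,dv_g = \int 2\,ds\,d\theta>0$
— but it does need to be said. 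Equivalently, one could appeal to the explicit horizontal family $G_\ell$ from Lemma \ref{lemma:horizontal-family} (whose $\ell$-derivative is a nonzero element of $\mathrm{Re}(\Hol(G_\ell))$ by construction), though in the paper's ordering that lemma is stated later. A second, smaller omission: for the sum to be \emph{direct} you also need $\{L_Xg\}\cap\mathrm{span}\{\psi_g^2 g\}=\{0\}$; this follows since $L_Xg$ being pure trace forces $X$ to be a conformal Killing field tangential to $\pC$, and on the finite cylinder these are exactly the rotations $c\,\partial_\theta$, which are Killing, so $L_Xg=0$. With these two points filled in, your argument is a complete proof in the spirit of the paper's remarks.
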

   
We recall that 
for the cylinder the space 
$\Hol$ 
is simply made up by elements of the form $cdz^2$, $c\in\R$, $z=s+i\theta$, for collar coordinates $(s,\theta)\in [-Y,Y]\times S^1$ as in Lemma \ref{lemma:metric-cyl-1}.

We also remark that the orthogonality relation claimed in the lemma is a simple consequence of the fact that the real part of a holomorphic quadratic differential is trace and divergence free.

This lemma implies that the most \textit{efficient} way (i.e. with least $L^2$ velocity) to lift a curve $[g(\cdot)]$ from 
Teichm\"uller space $\M/\D_0$
to $\M$ is as a \textit{horizontal} curve, moving only in the direction of $Re(\Hol(g))$. Here $\D_0$ denotes 
the space of smooth diffeomorphisms from $C_0$ to itself that are homotopic to the identity.

For cylinders we can describe such horizontal curves of 
metrics explicitly by the following lemma which is proved in the appendix

\begin{lemma}\label{lemma:horizontal-family}
Let $\eta>0$ be any fixed number. We define $Y=Y_\eta:(0,\infty)\to (0,\infty)$ by
$$Y(\ell)=\tfrac{2\pi}{\ell}\big(\tfrac\pi2-\atan(\eta\cdot \ell) \big)$$
and $f_\ell=f_\ell^\eta:C_0\to [-Y(\ell),Y(\ell)]\times S^1$ by
$$f_\ell(x,\phi)=(s_\ell^\eta(x),\phi)=\big(
\tfrac{2\pi}{\ell} \atan(\tfrac{\ell_0}\ell\cdot \tan(\tfrac{\ell_0}{2\pi}x))\, ,\,\phi\big)$$
where $\ell_0=\ell_0^\eta$ is determined through the condition 
$Y(\ell_0)=1$.

Then the family $G_\ell:=(f_\ell)^*\big(\rho_\ell^2(s)(ds^2+d\theta^2)\big)$ 
is horizontal, i.e. for every $\ell$
$$\frac{d}{d\ell}G_\ell\in Re(\Hol(C_0,G_\ell)).$$
\end{lemma}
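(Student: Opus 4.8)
The plan is to compute the family $G_\ell$ explicitly in the fixed coordinates $(x,\phi)$ on $C_0$, to write down the (one–dimensional) space $\text{Re}(\Hol(C_0,G_\ell))$ concretely, and then to differentiate in $\ell$ and observe that the result is a real multiple of a single holomorphic quadratic differential with coefficient independent of $x$. Writing $s_\ell:=s_\ell^\eta$, so that $f_\ell(x,\phi)=(s_\ell(x),\phi)$, one has
$$G_\ell=(f_\ell)^*\big(\rho_\ell^2(s)(ds^2+d\theta^2)\big)=\rho_\ell\big(s_\ell(x)\big)^2\big(s_\ell'(x)^2\,dx^2+d\phi^2\big).$$
Using $\rho_\ell(s)^2=\tfrac{\ell^2}{4\pi^2}\sec^2(\tfrac{\ell s}{2\pi})$, the relation $\tfrac{\ell}{2\pi}s_\ell(x)=\atan\!\big(\tfrac{\ell_0}{\ell}\tan(\tfrac{\ell_0 x}{2\pi})\big)$ and $\sec^2(\atan z)=1+z^2$, one finds, with the $\ell$-independent quantity $t:=\tan(\tfrac{\ell_0 x}{2\pi})$,
$$\rho_\ell\big(s_\ell(x)\big)^2=\tfrac1{4\pi^2}\big(\ell^2+\ell_0^2 t^2\big),\qquad s_\ell'(x)=\frac{\ell_0^2(1+t^2)}{\ell^2+\ell_0^2 t^2},$$
so that, abbreviating $P(x):=\ell_0^2(1+t^2)$, which does not depend on $\ell$, and $Q(x,\ell):=\ell^2+\ell_0^2 t^2$, for which $\partial_\ell Q=2\ell$,
$$G_\ell=\frac{P^2}{4\pi^2 Q}\,dx^2+\frac{Q}{4\pi^2}\,d\phi^2.$$
Along the way I would record that $s_\ell(1)=Y(\ell)$ — which follows from the defining relation $Y(\ell_0)=1$, equivalently $\tan(\tfrac{\ell_0}{2\pi})=\tfrac1{\eta\ell_0}$, together with $\tfrac\pi2-\atan\tau=\atan\tfrac1\tau$ for $\tau>0$ — so that $s_\ell$ maps $[-1,1]$ onto $[-Y(\ell),Y(\ell)]$, $f_\ell$ is a diffeomorphism onto the cylinder in the statement, $G_\ell\in\M$, and all the trigonometric arguments above stay in $(-\tfrac\pi2,\tfrac\pi2)$ so that $G_\ell$ is a smooth metric.

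Next I would identify $\text{Re}(\Hol(C_0,G_\ell))$. Since $f_\ell$ is by construction a conformal diffeomorphism from $(C_0,G_\ell)$ onto $([-Y(\ell),Y(\ell)]\times S^1,\rho_\ell^2(ds^2+d\theta^2))$, pullback by $f_\ell$ carries quadratic differentials that are holomorphic in the interior, continuous up to the boundary and real on the boundary for the target conformal structure to ones with the same properties for $(C_0,G_\ell)$, and commutes with taking real parts. By the fact recalled after Lemma \ref{lemma:splitting}, on the flat cylinder these are exactly $c\,dw^2$, $c\in\R$, with $w=s+i\theta$; since $(f_\ell)^*(dw)=s_\ell'(x)\,dx+i\,d\phi$, this gives
$$\text{Re}\big(\Hol(C_0,G_\ell)\big)=\big\{\,c\,\big(s_\ell'(x)^2\,dx^2-d\phi^2\big)\ :\ c\in\R\,\big\}.$$

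It then remains to differentiate the explicit formula for $G_\ell$. Because $P$ does not depend on $\ell$ and $\partial_\ell Q=2\ell$,
$$\partial_\ell\!\Big(\frac{Q}{4\pi^2}\Big)=\frac{\ell}{2\pi^2},\qquad \partial_\ell\!\Big(\frac{P^2}{4\pi^2 Q}\Big)=-\frac{\ell}{2\pi^2}\cdot\frac{P^2}{Q^2}=-\frac{\ell}{2\pi^2}\,s_\ell'(x)^2,$$
whence $\frac{d}{d\ell}G_\ell=-\frac{\ell}{2\pi^2}\big(s_\ell'(x)^2\,dx^2-d\phi^2\big)$, which by the previous paragraph lies in $\text{Re}(\Hol(C_0,G_\ell))$ with $c=-\tfrac{\ell}{2\pi^2}$, a real constant independent of the base point $x$. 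That is the assertion.

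The step I expect to be the crux is the simplification of the coefficients of $G_\ell$: the lemma hinges on two facts that emerge from it, namely that $\partial_\ell$ of the conformal factor $\rho_\ell(s_\ell(x))^2$ (equivalently of the $d\phi^2$-coefficient of $G_\ell$) is independent of $x$, and that $\partial_\ell$ of the $dx^2$-coefficient is the same $x$-independent multiple of $s_\ell'(x)^2$; together these force $\frac{d}{d\ell}G_\ell$ to be a fixed real multiple of $\text{Re}(dw^2)$ rather than merely a transverse-traceless tensor, and they are precisely what the definitions of $Y_\eta$ and $f_\ell^\eta$ were engineered to produce. So the real work is the trigonometric bookkeeping around $t=\tan(\tfrac{\ell_0 x}{2\pi})$; once the clean form $G_\ell=\tfrac{P^2}{4\pi^2 Q}dx^2+\tfrac{Q}{4\pi^2}d\phi^2$ with $P$ independent of $\ell$ and $\partial_\ell Q=2\ell$ is in hand, both the description of $\text{Re}(\Hol)$ and the differentiation are immediate.
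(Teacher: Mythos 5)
Your proof is correct, and it takes a genuinely different route from the paper's. You treat the lemma as a verification problem: you compute $G_\ell$ explicitly in the fixed coordinates $(x,\phi)$, obtaining the clean form
\begin{equation*}
G_\ell=\frac{P(x)^2}{4\pi^2\,Q(x,\ell)}\,dx^2+\frac{Q(x,\ell)}{4\pi^2}\,d\phi^2,\qquad P=\ell_0^2(1+t^2),\ Q=\ell^2+\ell_0^2 t^2,\ t=\tan(\tfrac{\ell_0 x}{2\pi}),
\end{equation*}
with $P$ independent of $\ell$ and $\partial_\ell Q=2\ell$, identify $\text{Re}(\Hol(C_0,G_\ell))$ by pulling back $c\,dw^2$ through the conformal map $f_\ell$, and then differentiate; the key algebraic felicity is that $\partial_\ell$ of the $d\phi^2$-coefficient is the $x$-independent constant $\tfrac{\ell}{2\pi^2}$, while $\partial_\ell$ of the $dx^2$-coefficient is $-\tfrac{\ell}{2\pi^2}\,s_\ell'(x)^2$, giving precisely $\tfrac{d}{d\ell}G_\ell=-\tfrac{\ell}{2\pi^2}\bigl(s_\ell'^2\,dx^2-d\phi^2\bigr)\in\text{Re}(\Hol(C_0,G_\ell))$. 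The paper proceeds in the opposite direction: it \emph{assumes} horizontality and \emph{derives} the formulas for $Y(\ell)$ and $f_\ell$ by writing the horizontality condition as an ODE for the width $w(\ell)$ (and then for $s_\ell$), integrates, and leaves as a check that the $(\partial_t g)_{xx}$ components match. Your approach is more self-contained and directly establishes exactly what the lemma asserts, whereas the paper's derivation has the added virtue of exhibiting where the formulas (and the free parameter $\eta$) come from and implicitly showing that these are the \emph{only} horizontal families in $\M$ up to a fixed pullback — a uniqueness fact that the main text invokes just after the lemma. All the computational steps in your argument check out, including the verification that $s_\ell(1)=Y(\ell)$ via the defining relation $\tan(\tfrac{\ell_0}{2\pi})=\tfrac{1}{\eta\ell_0}$.
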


Since $Y(\cdot)$ is a bijection, we can combine the above result with Lemmas 
\ref{lemma:metric-cyl-1} and \ref{lemma:conf} to conclude that  any horizontal curve of metrics in $\M$ must be of the form 
$f^*(G_{\ell(t)}^\eta)$ for some fixed $\eta>0$ and a fixed diffeomorphism $f:C_0\to C_0$.

As such, we shall from now on consider $\eta>0$ to be fixed and will in particular allow all constants to depend on this number as well as on the boundary curves $\Gamma_\pm$ 
(and their parametrisations $\alpha_\pm$) without further mentioning this.

To describe the space of admissible maps, we first recall that   
the prescribed boundary curves $\Gamma_{\pm}$ are assumed to be disjoint, closed $C^3$ Jordan curves of which we shall fix 
 proper $C^3$ parametrisations 
$$\alpha_\pm:S^1\to \Gamma_\pm.$$

We then consider maps in the space 
$$H_\Gamma^1(C_0,g):=\{u\in H^1((C_0,g),\R^n) \text{ such that }  u:\pCpm \to\Gamma_\pm \text{ is weakly monotone }\}$$
i.e. $H^1$ maps so that the traces $u\vert_\pCpm$ can be written in the form 
$$u\vert_\pCpm=\alpha_\pm\circ \varphi_\pm$$
for some weakly monotone functions $\varphi_\pm:S^1\to S^1$. 
Here and in the following we identify $\pCpm:=\{\pm 1\}\times S^1$ with $S^1$ when convenient.

It is well known that the space $H_\Gamma^1$ is not closed under weak $H^1$ convergence
as one can find sequences of maps with bounded energy for which the boundary 
curves $\Gamma_\pm$ are parametrised 
over smaller and smaller arcs of $\pCpm$ thus resulting in a weak limit that no longer spans $\Gamma_\pm$.
The standard way to deal with this loss of completeness is to impose a three-point-condition. So we shall restrict the set of admissible maps for our flow to  
\beqa 
H_{\Gamma,*}^1(C_0)&:=\{u\in H_\Gamma^1(C_0): u\vert_{\pCpm}=\alpha_\pm\circ\varphi_\pm \text{ for } \varphi_\pm 
\text{ satisfying }\\
&\qquad \qquad \varphi_\pm(\theta_k)=\theta_k \text{ for } \theta_k=\frac{2\pi}{3} k, \,k=0,1,2\}.
\eeqa

To compensate for the (in our case $6$) lost degrees of freedom we need to allow the metric to move not only in 
horizontal direction but also through the pull-back by select diffeomorphisms.

For this purpose we will define (and discuss) a suitable family of diffeomorphism $h_{b,\phi}$, 
$\phi=(\phi^+,\phi^-)\in\R^2$, $b=(b^+,b^-)\in \C^2$, with $\abs{b^\pm}<1$
later on in section \ref{sect:diffeos}. We remark that by using the one Killing field that is available for the cylinder we could reduce the number of degrees 
of freedom to $5$ instead of $6$ 
(e.g. by asking that $\phi^++\phi^-=0$) though this would not lead to a significant simplification.

All in all we then say that a curve $(u,g)(t)$ is an admissible representative of a curve of equivalence classes $[(u,g)(t)]\in\A$ if
\begin{itemize}
\item $g(t)=h_{\phi(t), b(t)}^*\tilde g(t) $ for a horizontal curve of metrics $\tilde g(\cdot)\in\M$ 
and continuous families of parameters $(b,\phi)(\cdot)\in \Om_h$ where 
$\Om_h:= (D_1(0))^2\times \R^2\subset \C^2\times \R^2$ is the domain of parameters for the diffeomorphisms $h_{b,\phi}$ that will be defined in 
section \ref{sect:diffeos}.
\item $u(t)\in H_{\Gamma,*}^1(C_0,\R^n)$ for every $t$.
\end{itemize}

We remark that we can and will assume without loss of generality that the initial metric $\tilde g(0)$ of the horizontal curve $(\tilde g)$ is given by one of the 
metrics $G_{\ell}$ described in Lemma \ref{lemma:horizontal-family}
simply by pulling-back the whole problem (including the parametrisations $\alpha_\pm$ used in the three-point-condition) by a fixed diffeomorphism. As such we shall from now on consider metric in the set 
$$\Mneu:=\{ h_{b,\phi}^*G_\ell:\, (b,\phi)\in \Om_h, \,\ell\in (0,\infty)\}.$$

\subsection{Definition of the flow}\label{sect:def}
As we consider a problem with a Plateau boundary condition, the space of admissible variations does not form a vectorspace. 
As such the flow that we shall define will not be governed by a system of PDEs with prescribed boundary values but rather, for the map component, by a partial differential inequality.

To motivate the following definition we first make some general formal computations,  which we of course do not claim to 
be new in any way, but which are rather included for the convenience of the reader. We remark in particular that the differential inequalities we derive correspond to the ones obtained in \cite{CL1} and \cite{D-S} in case of the domain being a disc. 

Given a functional $\mathcal{F}$ defined on some (Hilbert)manifold $B$ 
we may want to define a gradient flow under the restriction that the velocity $\pt w$ at each time is constrained to some closed
convex cone $X(w(t))\subset T_{w(t)}B$, e.g. because we want to constrain the flow to some convex set $A$ and thus the velocity to the corresponding solid tangent cone.

One can formally define such a gradient flow by asking that 
\beq\label{eq:formal-grad-1}
\pt w=P^{X(w)}\bigg(-\na\mathcal{F} (w) \bigg)
\eeq
where $P^{X(p)}:T_pB\to X(p)$ is the nearest point projection. 

We then observe that a variational formulation can be given by asking that at each time the velocity 
$\pt w$ is given by a variation $\ddeps w_\eps$ of $w_0=w(t)$ which is 
admissible in that $\ddeps w_\eps\in X(w(t))$ 
and which, among all such variations, minimises the functional 
\beq \label{eq:formal-grad-2}
\ddeps\mathcal{F}(w_\eps)+\tfrac12\norm{\ddeps w_\eps}^2 .\eeq

In practice, such a formulation often asks for more regularity, in particular of $\pt w$, than what we can a priori expect of a weak solution. So consider instead 2-parameter families $w_{\eps,\delta}$ 
 with  $w_{\eps,0}=w_\eps$ such that each of the families $ w_{\cdot,\delta}$ gives again an admissible variation of $w(t)$. Then if 
$w_{\eps,0}=w_\eps$ minimises \eqref{eq:formal-grad-2}
we must have that
\beq \label{eq:formal-grad-3}
\dddelta\ddeps\mathcal{F}(w_{\eps,\delta})
+\la\dddelta\ddeps w_{\eps,\delta},
\pt w \ra \geq 0
\eeq
which gives not only a weaker condition than \eqref{eq:formal-grad-2} but often requires less regularity of $\pt w$ than \eqref{eq:formal-grad-2}.

Going back to our problem of defining a gradient flow of the Dirichlet energy on the set $\A$  we recall that the (negative) $L^2$-gradient of 
the energy with respect to the metric variable can be written in the form 
$\frac14Re(\Phi(u,g))$, where $\Phi(u,g)$ is the Hopf-differential which is given in isothermal coordinates $(s,\theta)=(s_g,\theta_g)$ of $(C_0,g)$ by 
$\Phi(u,g)=(\abs{u_s}^2-\abs{u_\theta}^2-2i\la u_s,u_\theta\ra)\cdot dz^2$ , $z=z_g=(s+i\theta)$. 

The weak formulation \eqref{eq:formal-grad-3} thus translates to the condition that 
with $v:=\tfrac{d^2}{d \eps d\delta}\vert_{\delta=\eps=0} u_{\eps,\delta}$ and 
$h=\tfrac{d^2}{d \eps d\delta}\vert_{\delta=\eps=0} g_{\eps,\delta}$
\beqa
\left[\int\la dv,du\ra_g +v \cdot  \partial_tu \, dv_g\right]+\int \langle -\frac14\text{Re}(\Phi(u,g))+
\pt g, h\rangle \, dv_{g}\geq 0
\eeqa
for all variations $(u_{\eps,\delta},g_{\eps.\delta})$ of map and metric that are admissible in the sense described above.

On the one hand, the resulting differential inequality for $g$ 
can be simply recast as a differential equation
\beq \label{eq:evol-g}
\pt g=\frac14P^\Var_g(Re(\Phi(u,g))\eeq 
to be solved on $\Mneu$. Here $P^\Var_g$ is the $L^2$-orthogonal projection 
onto the tangent space $\Var(g)$ of $\Mneu=\{ h_{b,\phi}^*G_\ell:\, (b,\phi)\in \Om_h, \,\ell\in (0,\infty)\}$
which is given by 

 \beq \label{eq:VS-variations} \Var(g):=Re(\Hol(g))\oplus \{L_{h_{b,\phi}^*X}g: X\in \mathcal{X}(b,\phi)\},\eeq
 for $g=h_{b,\phi}^*G_\ell$ 
 where 
 $\mathcal{X}(b,\phi)$ is the $6$ dimensional space of vectorfields generating the diffeomorphisms $h_{b,\phi}$, compare section \ref{sect:diffeos}.

On the other hand, admissible variations of the map can be described as follows. 
Given $u\in \Hgamma$ we let $\varphi_\pm$ be such that $u\vert_\pCpm=\alpha_\pm\circ \varphi_\pm$.
Then functions of the form $\alpha_\pm\circ (\varphi_\pm+\eps\cdot \beta_\pm+O(\eps^2))$ are again monotone parametrisations of $\Gamma_{\pm}$ at least for $\eps$ in a small onesided interval 
$[0,\eps_0)$ if $\beta_\pm$ can be written in the form $\beta_\pm=\lambda_\pm\cdot (\psi_\pm-\varphi_\pm)$ for some numbers $\lambda_\pm>0$ and weakly monotone functions $\psi_\pm$.

As variations $\ddeps u$ of the map component we thus consider elements of
\beqa 
T_u^+H^1_{\Gamma,*}(C_0):=\{v\in H^1(C_0,\R^n): &\, v\vert_\pCpm=\lambda_\pm \alpha'_\pm(\varphi)\cdot (\psi_\pm-\varphi_\pm) \text{ for } \lambda_\pm>0 \text{ and }\\ &
\psi_\pm\in C^0(S^1,S^1) \text{  weakly monotone with } \psi_\pm(\theta_k)=\theta_k\}.\eeqa
We remark in particular that to any $v\in T_u^+H^1_{\Gamma,*}(C_0)$ there is a onesided variation $(u_\eps)\subset \Hgamma$, $\eps\in [0,\eps_0)$ with $\ddeps u_\eps=v$, see \cite[Lemma 2.1]{D-S}.

As
$T_u^+ H^1_{\Gamma,*}$ is in general not a vectorspace, but only a convex cone, we cannot reduce the resulting partial differential inequality 
\beq \label{eq:evol-u}
\la du, dw\ra_{L^2(C_0,g)}+\int w\cdot \pt u\, dv_g\geq 0 \text{ for all } w\in T^+_u H^1_{\Gamma,*}(C_0)
\eeq
to a PDE with a standard boundary condition though one immediately obtains that
$u$ satisfies the heat equation 
$\pt u=\Delta_g u$
in the interior.

Furthermore, as pointed out in \cite{D-S}, the additional 
condition that  
$$\int\la du, dw\ra+\Delta_g u \cdot w \,dv_g\geq 0 \text{ for all } w\in T^+_u H^1_{\Gamma,*}, $$
can be seen as a weak Neumann-type boundary condition.

Given that \eqref{eq:evol-g} and \eqref{eq:evol-u} were motivated by the idea that $\pt(u,g)$ should minimise the functional
\eqref{eq:formal-grad-2}, the so called stationarity condition, asking that  

\beq\label{eq:stationar}
\frac14\int Re( \Phi(u,g))L_X g dv_g+\int Du(X)\cdot \Delta_g u dv_g =0  \text{ for every } X\in \Gamma(TC_0)_*
\eeq

where  
$$\Gamma(TC_0)_*:=\{Y\in \Gamma(TC_0): Y(\pm1, \theta_k)=0\},$$ results if one considers 
variations of the form $(u(t+\eps)\circ f_\eps, g)$.

Similarly one expects the energy to be non-increasing along the flow, compare \eqref{eq:energy-cond} below. 

All in all, we define
\begin{defn}
A weak solution of  Teichm\"uller harmonic map flow on the cylinder $C_0$ is represented by a curve of maps
$$u\in L^\infty([0,T),H^1_{\Gamma,*}(C_0,\R^n))\cap H^1([0,T)\times C_0)$$
and a curve of metrics
 $g\in C^{0,1}([0,T),\Mneu)$
which satisfy
\beq
\label{eq:fluss-map}
\int_{[0,T]\times C_0}\la du, dw\ra_{g}+ \pt u\cdot w\, dv_{g(t)} \, dt \geq 0 
\text{ for all }w\in L^2([0,T],T^+_u H^1_{\Gamma,*}(C_0))
\eeq
and 
\beq \label{eq:fluss-metric}
\pt g=\tfrac14P^\Var_g(Re(\Phi(u,g)) \qquad \text{for a.e. } t.
\eeq

Such a weak solution is called stationary if it satisfies \eqref{eq:stationar} for almost every $t$, and we say $(u,g)$ satisfies the energy inequality if 
 for almost every $t_1<t_2$  
\beq \label{eq:energy-cond} E(u,g)(t_1)-E(u,g)(t_2)\geq \halb \int_{t_1}^{t_2}\int_{C_0}\abs{\pt u}^2 dv_g dt+ \int_{t_1}^{t_2}\norm{\pt g}_{L^2(C_0,g)}^2 dt.\eeq
\end{defn}

\subsection{Main results}
For the flow we just defined we will prove the following two main results
\begin{thm}[Existence of global solutions]\label{thm1}
Let $\Gamma_{\pm}$ be two disjoint closed $C^3$ Jordan curves. 
Then to any initial data $(u_0,g_0)\in \Hgamma\times \Mneu$
there exists a stationary weak solution $(u,g)$ of Teichm\"uller harmonic map flow which is
defined for all times, smooth in the interior of $C_0$ and satisfies the energy inequality.
\end{thm}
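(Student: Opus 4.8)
\textbf{Proof strategy for Theorem \ref{thm1}.}

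The plan is to construct the global solution by a time-discretisation (minimising movements / De Giorgi) scheme, following the strategy of Chang--Liu \cite{CL1} and Duzaar--Scheven \cite{D-S} for the map component, combined with the control of the metric component developed for the closed-surface case in \cite{Rexistence}. First I would fix a step size $h>0$ and, given $(u^{(k)},g^{(k)})$, define $(u^{(k+1)},g^{(k+1)})$ as a minimiser of
\beqs
(u,g)\mapsto E(u,g)+\tfrac1{2h}\big(\norm{u-u^{(k)}}_{L^2(C_0,g^{(k)})}^2+d_{\Mneu}(g,g^{(k)})^2\big)
\eeqs
over $u\in H^1_{\Gamma,*}(C_0)$ and $g\in\Mneu$, where $d_{\Mneu}$ is the obvious finite-dimensional-plus-Teichm\"uller distance coming from the $L^2$-metric restricted to $\Mneu$. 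Existence of a minimiser at each step uses the direct method: energy bounds give weak $H^1$-compactness of the maps, the three-point-condition prevents the loss of the Plateau boundary constraint (this is exactly the point of working in $H^1_{\Gamma,*}$ rather than $H^1_\Gamma$), and the parameters $(b,\phi,\ell)$ describing $g$ a priori live in a bounded-below region once one knows $\ell$ stays away from $0$. The Euler--Lagrange (in)equations of the minimiser are precisely the discrete analogues of \eqref{eq:fluss-map}--\eqref{eq:fluss-metric}, including the weak Neumann-type boundary condition for $u$ and the projection onto $\Var(g)$ for $g$; the stationarity condition \eqref{eq:stationar} and the energy inequality \eqref{eq:energy-cond} follow on the discrete level by testing with the vector fields in $\Gamma(TC_0)_*$ and by the standard telescoping of the minimising-movement inequality.

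Next I would pass to the limit $h\to 0$. The discrete energy inequality gives, uniformly in $h$, a bound on $\int_0^T\norm{\pt u}^2\,dv_g\,dt+\int_0^T\norm{\pt g}_{L^2}^2\,dt$ together with $\sup_t E(u,g)(t)\le E(u_0,g_0)$, so the interpolated curves are equicontinuous into $L^2$ (for $u$) and into $\Mneu$ (for $g$, which is finite-dimensional modulo the one Teichm\"uller parameter), and bounded in $H^1$; Aubin--Lions and Arzel\`a--Ascoli then produce a limiting pair $(u,g)$ with $u\in L^\infty_tH^1\cap H^1_{t,x}$ and $g\in C^{0,1}_t\Mneu$. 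The variational inequality \eqref{eq:fluss-map} is stable under this convergence because the admissible cone $T^+_uH^1_{\Gamma,*}$ varies upper-semicontinuously with $u$ (monotone limits of monotone reparametrisations are monotone, and the three points are fixed), and \eqref{eq:fluss-metric} passes to the limit since $P^\Var_g$ depends continuously on $g$ on the part of $\Mneu$ where $\ell$ is bounded. Interior smoothness of $u$ follows from the interior heat equation $\pt u=\Delta_g u$ (which \eqref{eq:fluss-map} forces, as noted after \eqref{eq:evol-u}) together with parabolic bootstrapping, using that $g$ is smooth in $x$ and Lipschitz in $t$; smoothness of $g$ in the interior is automatic since $\Mneu$ consists of smooth metrics.

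The only genuinely global-in-time obstacle is to rule out that the construction breaks down in finite time, i.e. that the Teichm\"uller parameter $\ell(t)$ (equivalently, via Lemma \ref{lemma:horizontal-family}, the modulus $Y(\ell)$) runs off to a degenerate value, or that the diffeomorphism parameters $(b,\phi)$ escape the domain $\Om_h=(D_1(0))^2\times\R^2$. This is where the a priori estimates promised for section \ref{sect:short-time} enter: one shows $\abs{\ddt\ell}\le C(1+\ell)$ and a matching lower-order bound on $\ddt(b,\phi)$ driven by $\norm{Re(\Phi(u,g))}_{L^1}\le CE(u_0,g_0)$, so that on any finite interval $\ell$ stays in a compact subset of $(0,\infty)$ and $\abs{b^\pm}$ stays bounded away from $1$ --- the energy inequality provides the global-in-time bound on the driving term. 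Note that, in contrast to the closed case, no non-collar degeneration can occur here because the flow is confined to $\Mneu$ by construction; the subtle point is precisely the lower bound on $\ell$ (preventing a boundary curve of the domain from shrinking), which is the reason collar metrics rather than flat ones were chosen. Once finite-time non-degeneration is established, the short-time solution extends and, being uniform, yields the global stationary weak solution satisfying the energy inequality. I expect the finite-time control of $\ell$ and $(b,\phi)$ --- i.e. showing the flow cannot leave the admissible region $\Mneu$ in finite time --- to be the main technical hurdle, with the passage to the limit in the variational inequality \eqref{eq:fluss-map} (because of the moving convex cone) a close second.
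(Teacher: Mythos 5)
Your proposal captures the overall architecture (time discretisation, a priori estimates, passage to the limit, then ruling out finite-time degeneration) but there are two genuine gaps, plus a structural deviation from the paper's scheme that introduces an extra difficulty.

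First, the paper's discretisation is \emph{not} a joint minimising-movement step in $(u,g)$: the metric is updated by solving the ODE $\pt g=\tfrac14P^\Var_g\big(Re(\Phi(u^h(t_j),g))\big)$ on $(t_j,t_{j+1}]$ with the \emph{old} map frozen, and only the map is then obtained by minimising $\mathcal{F}^h_{g^h(t_{j+1}),u^h(t_j)}$. Your joint-minimisation variant would have Euler--Lagrange equations involving the first variation of $d_{\Mneu}(\cdot,g^{(k)})^2$, which is $-2\exp_g^{-1}(g^{(k)})$ rather than anything that obviously discretises \eqref{eq:fluss-metric}; you would need to justify that this still produces the projected Hopf differential in the limit, and the well-posedness of the joint minimisation problem (coercivity in the $g$-variable) is not addressed. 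The paper sidesteps all of this by solving the metric ODE directly, using that $\Var(g)$ is finite-dimensional and that $P^\Var_g$ is Lipschitz (Lemma \ref{lemma:Lip-Projection}).

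Second and more seriously: when passing $h\to 0$ in the metric equation you assert that $P^\Var_g$ depends continuously on $g$ where $\ell$ is bounded, which is true but insufficient. Since $P^\Var_g$ is a \emph{non-local} $L^2$-projection, one needs the Hopf differentials $\Phi(u_j,g_j)$ to converge \emph{strongly in $L^1$ on all of $C_0$}, which in turn requires strong $H^1$ convergence of the maps everywhere --- in particular in neighbourhoods of the points $P_j^\pm$ where the three-point-condition is imposed and where the Duzaar--Scheven $H^2$ estimates do not apply. Ruling out energy concentration at $P_j^\pm$ is precisely the content of the paper's key Lemma \ref{lemma:key-est}, whose proof requires the boundary-straightening construction of Lemma \ref{lemma:make-boundary-flat} to build admissible competitors respecting the Plateau condition. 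Your sketch does not address this, and without it the limit metric cannot be identified as the solution of \eqref{eq:fluss-metric}.

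Third, the claimed a priori bound $\abs{\ddt\ell}\le C(1+\ell)$ does not hold, and the paper explicitly points out (equation \eqref{est:evol-l-small}) that for small $\ell$ the best one gets from the finite $L^2$-length of $g(\cdot)$ is $\abs{\ddt\ell}\le C\ell^{1/2}\norm{\pt g}_{L^2}$, which \emph{does} permit $\ell\to 0$ in finite length. The lower bound on $\ell$ is therefore not a consequence of the energy inequality plus the $L^1$-bound on $\Phi$; it requires the separate weighted-energy argument of Lemma \ref{lemma:inj}, which imports the Bochner-type estimate for $\I(t)=\int e(u,g)\rho^{-2}\vph(\rho)\,dv_g$ from the closed-surface paper \cite{RT2}. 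This is the genuine extra ingredient needed to rule out degeneration of the injectivity radius; the a priori estimates you cite only control $(b,\phi)$ (via Lemma \ref{lemma:diffeo-properties-1}) and the upper bound on $\ell$.
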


The above solution flows to a minimal surface in the sense that 
\begin{thm}[Asymptotics]\label{thm2}
Let $(u,g)(t),\, t\in [0,\infty)$, be a stationary weak solution of Teichm\"uller harmonic map flow that satisfies the energy inequality 
and for which the three-point-condition does not degenerate in the sense that 
$\limsup_{t\to \infty} 1-\abs{b^\pm(t)}>0$.
Then there is a sequence of times $t_i\to\infty$ such that the equivalence classes $[(u,g)(t_i)]$ converge to a critical point of the area in one of the following ways:
\begin{itemize}
\item[I](Non-degenerate case) 
If $inj(C_0,g(t_i))\nrightarrow 0$ for $i\to \infty$ then 
$f_i^*(u(t_i),g(t_i))$ converges to a limit $(u_\infty,g_\infty)$ where $g_\infty\in \Mneu$ and 
where $u_\infty\in H^1_{\Gamma,*} (C_0, \R^n)\cap C^0(C_0)$
is a (possibly branched) minimal immersion.

Here  
\beq \label{eq:diffeos-asympt}
f_i:=h_{0,2\pi n_i}, \quad n_i^\pm=\lfloor\tfrac{\phi^\pm(t_i)}{2\pi}\rfloor
\eeq
and the convergence for the metric component is smooth convergence on all of $C_0$ while the maps converge 
uniformly on the whole cylinder $C_0$ as well as strongly in $H^1(C_0)$ and weakly in $H^2_{loc}(C_0\setminus \bigcup_{j,\pm}P_j^\pm)$ away from the points 
$P_j^\pm=(\pm1,\theta_j)$, $j=0,1,2$ at which the three-point-condition is imposed.
\item[II](Degenerate case) 
If $\inj(C_0,g(t_i))\to 0$
then 
$f_i^*(u(t_i),g(t_i))$ converges locally on $C_0\setminus \big(\{0\}\times S^1\big)$ to a limit
$  (u_\infty, g_\infty)$
which is such that
\begin{itemize}
 \item[\textbullet] 
  each of the cylinders $(C_\pm,g_\infty)$, $C_{\pm}:=\{0<\pm s\leq 1\}\times S^1$ 
is isometric to the hyperbolic cusp 
 $$([0,\infty)\times S^1, \rho_0(s)^2(ds^2+d\theta^2)),\quad \rho_0(s)=\frac1{2\pi \eta+s}$$
\item[\textbullet]
The two maps $u_\infty\vert_{C_\pm}$ can be extended across the punctures to give 
two (possibly branched) minimal immersion 
 $\bar u_\infty^\pm\in H^1_{\Gamma_\pm, *}(\overline{D})\cap C^0(\overline{D})$ parametrised over closed disc in $\R^2$ each of which spans the corresponding boundary curve $\Gamma^\pm$. 
\end{itemize}

Here the convergence is smooth local convergence for the metrics and weak $H^2_{loc}$ convergence on $(C_-\cup C_+)\setminus \bigcup_{j,\pm} P_j^\pm$
as well as locally uniform and 
strong $H^1_{loc}$ convergence on $C_-\cup C_+$ for the maps
and the diffeomorphisms are again given
 by \eqref{eq:diffeos-asympt}.
\end{itemize}
\end{thm}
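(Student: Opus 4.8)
The plan is to establish Theorem~\ref{thm2} by combining the a~priori estimates and energy inequality with a Sacks--Uhlenbeck/Rivière-type bubbling analysis adapted to the Plateau boundary condition, following the strategy of \cite{RT2} for closed surfaces but inserting the new boundary-regularity input from the disc theory of \cite{CL1,D-S}. First I would exploit the energy inequality \eqref{eq:energy-cond}: since $E(u,g)(t)$ is nonincreasing and bounded below by zero, the right-hand side is finite on $[0,\infty)$, so there is a sequence $t_i\to\infty$ along which $\norm{\pt u(t_i)}_{L^2(C_0,g(t_i))}\to 0$ and $\norm{\pt g(t_i)}_{L^2}\to 0$. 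By \eqref{eq:fluss-metric} the latter forces $P^\Var_{g(t_i)}(\mathrm{Re}\,\Phi(u(t_i),g(t_i)))\to 0$ in $L^2$; in particular the $\mathrm{Re}(\Hol)$-component of the Hopf differential tends to zero, and the stationarity condition \eqref{eq:stationar} together with $\pt u(t_i)\to 0$ controls the remaining (vectorfield-generated) components. Thus along $t_i$ the pair $(u(t_i),g(t_i))$ is an approximate critical point of $E$: $u(t_i)$ is approximately harmonic with approximately free (Neumann-type) boundary values along the monotone parametrisations, and $\Phi(u(t_i),g(t_i))$ is approximately trace- and divergence-free with small holomorphic part.

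Next I would normalise by the diffeomorphisms $f_i=h_{0,2\pi n_i}$ of \eqref{eq:diffeos-asympt}, which absorb the integer parts of the rotation parameters $\phi^\pm(t_i)$ while leaving the conformal class and the three-point-condition intact, so that the rescaled parameters $(b^\pm(t_i),\phi^\pm(t_i)-2\pi n_i^\pm)$ lie in a fixed compact subset of $\Om_h$ precisely because of the nondegeneracy hypothesis $\limsup(1-\abs{b^\pm(t_i)})>0$ (after passing to a further subsequence). Then the dichotomy is governed by $\inj(C_0,g(t_i))$, equivalently by whether $\ell(t_i)$ stays bounded away from $0$ or not. In Case~I, $g(t_i)\to g_\infty\in\Mneu$ smoothly on all of $C_0$ by the explicit description of $\Mneu$ in Lemma~\ref{lemma:horizontal-family} and compactness of the parameter domain; the maps $u(t_i)$ have bounded energy, satisfy a uniform three-point-condition, and are approximately harmonic with free boundary, so by the $\ep$-regularity for (almost-)harmonic maps in the interior and the boundary regularity theory for the Plateau problem (no interior or boundary bubbles can form because any bubble would carry a fixed quantum of energy and, at the boundary, the three-point-condition plus monotonicity rules out the concentration of a boundary bubble at a non-prescribed point while the energy bound limits the number of prescribed points) one gets strong $H^1(C_0)$ and uniform convergence to a limit $u_\infty$, weakly in $H^2_{\mathrm{loc}}$ away from the six points $P_j^\pm$; the limit then satisfies $\Delta_{g_\infty}u_\infty=0$, the weak Neumann condition, and $\Phi(u_\infty,g_\infty)\equiv0$, hence is a (possibly branched) minimal immersion by \cite{GOR}, and $u_\infty\in H^1_{\Gamma,*}\cap C^0$ because monotonicity and the three-point-condition pass to uniform limits.

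In Case~II, $\ell(t_i)\to 0$, so the hyperbolic collar degenerates: on the cylinder $[-Y(\ell),Y(\ell)]\times S^1$ with $Y(\ell)\to\infty$, the collar metric $\rho_\ell^2(ds^2+d\theta^2)$ converges, after the standard cusp rescaling near each of $s=\pm Y(\ell)$ (equivalently near $s=0$ on $C_0$), to the hyperbolic cusp metric $\rho_0(s)^2(ds^2+d\theta^2)$ with $\rho_0(s)=(2\pi\eta+s)^{-1}$ — this is exactly the content of Lemma~\ref{lemma:horizontal-family} read in the limit $\ell\to0$, using the definition of $Y_\eta$ — giving the stated isometry type of $(C_\pm,g_\infty)$; I would make this precise by change of variables in the explicit formulas for $s_\ell^\eta$ and $\rho_\ell$. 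On the two halves $C_\pm$ the maps again subconverge locally in $H^1$ and uniformly to approximately-then-exactly harmonic maps with small energy on the degenerating neck; since the neck $\{|s|\le\delta\}$ carries energy $\le\ep$ (a neck-energy decay estimate of the Sacks--Uhlenbeck/Rivière type, which here I expect to be the technically heaviest ingredient), there is no energy loss across the puncture, so by the removable-singularity theorem for harmonic maps with finite energy each $u_\infty|_{C_\pm}$ extends conformally across the cusp end to a harmonic map $\bar u_\infty^\pm$ on the closed disc $\overline D$, still monotone on $\partial D=\Gamma_\pm$ and satisfying the three-point-condition, hence a (possibly branched) minimal disc spanning $\Gamma_\pm$; uniform convergence up to the boundary away from $P_j^\pm$ follows from the boundary $\ep$-regularity of \cite{CL1,D-S}. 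The main obstacle is precisely controlling the neck in Case~II — proving that no energy concentrates on the hyperbolic neck and that the limiting object on each side closes up to a genuine disc rather than losing a boundary curve — which requires the quantitative collar/no-neck-energy estimates of the kind developed in \cite{RT2}, now combined with the free-boundary regularity so that the monotone boundary parametrisation is not destroyed in the limit; the nondegeneracy assumption on $b^\pm$ is what guarantees that the boundary curves are not themselves pinched off together with the neck.
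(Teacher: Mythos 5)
Your overall scaffolding is right (select $t_i$ via the energy inequality, normalize by $f_i=h_{0,2\pi n_i}$, dichotomy via $\inj(C_0,g(t_i))$, removable singularity in the degenerate case), but the proposal has a genuine gap at the crux of the proof: you never actually justify that $\Phi(u_\infty,g_\infty)\equiv0$ --- you simply assert it. Note that \eqref{eq:fluss-metric} only tells you that the \emph{projection} $P^\Var_{g(t_i)}(\mathrm{Re}\,\Phi(u(t_i),g(t_i)))\to 0$, where $\Var(g)=\mathrm{Re}(\Hol(g))\oplus\{L_{h^*_{b,\phi}X}g:\ X\in\chi(b,\phi)\}$ is a seven-dimensional space. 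That is very far from implying $\Phi\to 0$, and the limit Hopf differential is a priori only holomorphic in the interior, possibly with poles at the six points $P_j^\pm$. The paper closes this gap by combining: (i) strong $L^1$ convergence of the Hopf differentials on all of $C_0$ (which itself needs the ``key'' Lemma \ref{lemma:key-est} ruling out energy concentration at the $P_j^\pm$, not a generic bubbling/no-bubble claim); (ii) passing to the limit in the stationarity condition \eqref{eq:stationar} and in the vanishing of the $\chi(b,\phi)$-component, and then using Lemma \ref{lemma:diffeo-properties-2} to obtain $\int \mathrm{Re}(\Phi_\infty)\,L_Xg_\infty\,dv=0$ for \emph{all} $X$ tangent to $\pC$; (iii) a Fourier-mode computation, carried out on subcylinders $[-1+\eps,1-\eps]\times S^1$ because of the potential poles, showing that this forces $\Phi_\infty=a_0\,dz^2\in\Hol$; and finally (iv) $P^{\Hol}(\Phi_\infty)=0$ gives $a_0=0$. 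In Case II the analogous step shows $\Phi_\infty=c^\pm(ds+id\theta)^2$, i.e.\ a possible second-order pole $c^\pm z^{-2}dz^2$ on the punctured disc, and it is the removable-singularity theorem (forcing $\Phi_\infty$ to be smooth across the puncture) that finally kills $c^\pm$. None of this appears in your proposal.

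You also misidentify where the technical weight lies. In Case II the paper does \emph{not} need a neck-energy/no-neck estimate: the statement only asserts convergence on $C_-\cup C_+$, i.e.\ away from the collapsing geodesic, and no claim is made (or needed) that no energy is lost on the neck; the $L^1$-bound $\norm{\Phi_\infty}_{L^1}<\infty$ is all that is used to rule out exponentially growing Fourier modes of $\Phi_\infty$ on the cusp. Conversely, the ingredient you treat as routine --- ``boundary $\ep$-regularity plus the three-point-condition rules out boundary bubbling'' --- is precisely what Lemma \ref{lemma:key-est} and Corollary \ref{cor:H^1-strong-estimates} establish, and that argument is delicate: it requires straightening the boundary curve (Lemma \ref{lemma:make-boundary-flat}) to build admissible one-sided variations of the Plateau boundary condition, and it is this lemma that makes the strong $H^1(C_0)$ convergence --- and hence the $L^1$ convergence of the Hopf differentials entering the projection --- possible near the $P_j^\pm$. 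Without it, and without the stationarity/Fourier argument, the proof does not close.
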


We remark that while we only obtain convergence in $H^1\cap C^0$ respectively in $H^2$ away from $P_j^\pm$, the limit $u_\infty$ is indeed far more regular than that. 
Namely, classical regularity theory for solutions of the Plateau-Problem, see e.g. \cite{Struwe-book} or \cite{book-minimal}, 
yields that $u_\infty$ is of class $C^{2,\alpha}$, $\alpha<1$, upto the boundary.

\section{Short-time existence of solutions}\label{sect:short-time}

We shall prove short-time existence of solutions to arbitrary initial data 
based on a time discretisation scheme. We remark that this method has been carried out successfully to obtain solutions of several other geometric flows, e.g. by Haga et. al. \cite{Haga} for harmonic map flow and by Moser \cite{Moser} for biharmonic map flow, 
and that also the solutions for the evolution to minimal discs by Chang-Liu \cite{CL1} respectively to discs of prescribed mean curvature of Duzaar-Scheven \cite{D-S} were obtained this way.

A key part of this section consists in proving suitable a priori estimates for the 
approximate solutions resulting from such a time discretisation. 
For some of these estimates we will be able to appeal to work of Duzaar and Scheven \cite{D-S} whose delicate estimates allowed them
to prove  
$H^2$ bounds upto the boundary but away from the points $P_j^\pm$ despite their equation being non-linear.

In the present paper the challenges are somewhat different as we do not have to deal with a non-linear equation for the map component 
but instead have to understand the interplay of the map and the metric component of the flow. What makes this particular aspect of the flow quite delicate,
 is that this relation involves a non-local projection operator.
This forces us to prove estimates that are valid not just near most boundary points but rather in neighbourhood of every boundary point, 
including the points $P_j^\pm$ at which we impose the three-point-condition.

\subsection{The time discretisation scheme}\label{sect:time-dicr}
To begin with we outline the time-discretisation scheme and show that it is well defined. 

Given an initial pair $(u_0,g_0)\in \Hgamma\times \Mneu$ 
and a (small) number $h>0$ we construct an approximate solution of Teichm\"uller harmonic map flow 
using the following time-discretisation:

For $j=0,1,2,..$ we let $t_j=t_j^h=j\cdot h$, set $u^h(t)=u_0$ for $t\in [t_0,t_1]$ and then construct iteratively the approximate solution 
$(u^h,g^h)$ on the interval $[ t_j^h, t_{j+1}^h]$ as follows:

First determine 
$g^{h}(\cdot)$ on $( t_j^h, t_{j+1}^h]$ as the solution of
\beq 
\label{eq:gh}
 \pt g(t):=\frac14P_{g(t)}^\Var( \Phi(u^h(t_j),g(t)) \text{ with } g(t_j)=g^h(t_j).
 \eeq

Then select $u^h(t_{j+1})$ as a minimiser of the functional $\mathcal{F}_{g^h(t_{j+1}), u^h(t_{j})}^h$ where 
\beq \label{eq:minim-fhj}
\mathcal {F}_{g,v}^h(w)=E(w,g)+\frac1{2h}\norm{w-v}_{L^2(C_0,g)}^2.\eeq

The existence of a minimiser of this functional is assured by the direct method of calculus of variation 
thanks to the $H^1$-weak-lower semicontinuity of $u\mapsto E(u,g)$ as well as the Courant Lebesgue Lemma and the resulting equicontinuity of the traces $u\vert_{\partial C_0}$, c.f. appendix \ref{sect:Courant}. 
 
To be more precise, we have

\begin{lemma}\label{lemma:ex-minimiser}
For any $g\in \M$, any map $\bar u\in H^{1}_{\Gamma,*}(C_0)$ and any $h>0$ there exists a minimiser $w\in\Hgamma$ of 
$$\mathcal{F}(w)=E(w,g)+\tfrac1{2h}\norm{w-\bar u}_{L^2(C_0,g)}^2$$
and $w$ satisfies
\beq\label{eq:E-L-eq-minimiser}
\int_{C_0} \langle dw,dv\rangle_g+\tfrac 1h\cdot(w-\bar u)\cdot v \,dv_g\geq 0 \text{ for all } v\in T^+_w\Hgamma
\eeq
in particular $\Delta_g w=\tfrac 1h\cdot(w-\bar u)$ in the interior of $C_0$. 

Furthermore $w$ satisfies the stationarity equation
\beq \label{eq:stat.eq-min} \frac14\int Re(\Phi(w,g)) \cdot L_Xg dv_g+\int dw(X)\cdot \Delta_g w \,dv_g=0 \text{ for all } X\in \Gamma(TC_0)_*\eeq
and the energy inequality 
\beq \label{est:energy-ineq-EL}
E(w,g)+\tfrac1{2h}\cdot \norm{w-\bar u}_{L^2(C_0,g)}^2\leq E(\bar u,g).\eeq
\end{lemma}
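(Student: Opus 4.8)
The plan is to produce $w$ by the direct method of the calculus of variations and then to extract \eqref{eq:E-L-eq-minimiser}, \eqref{eq:stat.eq-min} and \eqref{est:energy-ineq-EL} by comparing $w$ with, respectively, outer variations of the map supplied by \cite[Lemma 2.1]{D-S}, inner variations generated by vectorfields in $\Gamma(TC_0)_*$, and the competitor $\bar u$ itself. For existence I would fix a minimising sequence $(w_k)\subset\Hgamma$ of $\mathcal F$; since $\mathcal F(\bar u)=E(\bar u,g)<\infty$ we get $E(w_k,g)+\tfrac1{2h}\norm{w_k-\bar u}_{L^2(C_0,g)}^2\leq C$, so $(w_k)$ is bounded in $H^1(C_0,\R^n)$ and, after passing to a subsequence, $w_k\weakto w$ in $H^1$ and $w_k\to w$ in $L^2(C_0)$ by Rellich. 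Weak $H^1$-lower semicontinuity of $w\mapsto E(w,g)$ together with $\norm{w_k-\bar u}_{L^2}\to\norm{w-\bar u}_{L^2}$ then gives $\mathcal F(w)\leq\inf_{\Hgamma}\mathcal F$, provided $w\in\Hgamma$. The latter, i.e.\ the closedness of the constraint set, is the one step requiring a genuine argument: by the Courant--Lebesgue Lemma the uniform energy bound together with the monotonicity of the traces and the three-point condition yields a common modulus of continuity for the $w_k|_{\pCpm}$, so Arzela--Ascoli gives, along a further subsequence, their uniform convergence; writing $w_k|_{\pCpm}=\alpha_\pm\circ\varphi_{\pm,k}$ and using that $\alpha_\pm$ is a homeomorphism onto $\Gamma_\pm$, the $\varphi_{\pm,k}$ converge uniformly to weakly monotone limits $\varphi_\pm$ that still fix the points $\theta_k$, while weak $H^1$-continuity of the trace operator identifies $w|_{\pCpm}=\alpha_\pm\circ\varphi_\pm$. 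Hence $w\in H^1_{\Gamma,*}(C_0)=\Hgamma$ is the desired minimiser, and \eqref{est:energy-ineq-EL} is immediate from $\mathcal F(w)\leq\mathcal F(\bar u)=E(\bar u,g)$.

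For \eqref{eq:E-L-eq-minimiser} I would take $v\in T^+_w\Hgamma$ and use \cite[Lemma 2.1]{D-S} to get a one-sided variation $(w_\eps)_{\eps\in[0,\eps_0)}\subset\Hgamma$ with $w_0=w$ and $\ddeps w_\eps=v$; minimality forces $\tfrac{d}{d\eps}\big|_{\eps=0^+}\mathcal F(w_\eps)\geq0$, and since $\ddeps E(w_\eps,g)=\int_{C_0}\la dw,dv\ra_g\,dv_g$ and $\ddeps\tfrac1{2h}\norm{w_\eps-\bar u}_{L^2(C_0,g)}^2=\tfrac1h\int_{C_0}(w-\bar u)\cdot v\,dv_g$, this is precisely \eqref{eq:E-L-eq-minimiser}. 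Testing with $v\in C_c^\infty(\interior C_0,\R^n)\subset T^+_w\Hgamma$, for which both $\pm v$ are admissible so that the inequality becomes an equality, yields the weak form of $\Delta_g w=\tfrac1h(w-\bar u)$ on $\interior C_0$; as the right-hand side lies in $H^1$, interior elliptic regularity then gives $w\in H^3_{\mathrm{loc}}(\interior C_0)$ (and smoothness in the interior whenever $\bar u$ is smooth).

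For the stationarity identity \eqref{eq:stat.eq-min} I would use the inner variations $w_\eps:=w\circ f_\eps$ with $f_\eps$ the flow on $C_0$ of a vectorfield $X\in\Gamma(TC_0)_*$. Since $X$ is tangential to $\pC$, the $f_\eps$ are diffeomorphisms of $C_0$ preserving $\pCpm$, and since $X$ vanishes at the points $P_j^\pm=(\pm1,\theta_j)$ the maps $f_\eps|_{\pCpm}$ fix the $\theta_k$; hence $w_\eps|_{\pCpm}=\alpha_\pm\circ(\varphi_\pm\circ f_\eps|_{\pCpm})$ is still weakly monotone and still obeys the three-point condition, so $w_\eps\in\Hgamma$ for $\eps$ small of either sign. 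Minimality then gives $\tfrac{d}{d\eps}\big|_0\mathcal F(w_\eps)=0$; here the $L^2$-term contributes $\tfrac1h\int_{C_0}(w-\bar u)\cdot dw(X)\,dv_g$, while for the energy term I would use the diffeomorphism invariance $E(w\circ f_\eps,g)=E(w,(f_\eps)_*g)$ together with $\ddeps(f_\eps)_*g=-L_Xg$ and the fact recalled in Section \ref{sect:def} that the negative $L^2(C_0,g)$-gradient of $E(\,\cdot\,,g)$ in the metric variable is $\tfrac14 Re(\Phi(w,g))$, so that $\ddeps E(w\circ f_\eps,g)=\tfrac14\int_{C_0}Re(\Phi(w,g))\,L_Xg\,dv_g$. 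Substituting $\tfrac1h(w-\bar u)=\Delta_g w$ from the previous step then turns $\tfrac{d}{d\eps}\big|_0\mathcal F(w_\eps)=0$ into \eqref{eq:stat.eq-min}.

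The one genuinely delicate point is the closedness of the constraint set in the existence step: without the three-point condition a bounded-energy minimising sequence could parametrise $\Gamma_\pm$ over ever smaller subarcs of $\pCpm$ and lose them in the limit, and it is precisely the combination of that condition with the Courant--Lebesgue equicontinuity that keeps the limit inside $\Hgamma$. Everything else is a routine first-variation computation; in particular, none of the sharp boundary estimates of \cite{D-S} enter the proof of this lemma.
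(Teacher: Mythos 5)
Your proof is correct and takes essentially the same approach the paper indicates: the paper gives only a brief remark before Lemma~\ref{lemma:ex-minimiser} (direct method plus weak $H^1$-lower semicontinuity plus Courant--Lebesgue equicontinuity of the traces to keep the weak limit inside $\Hgamma$) and leaves the first-variation computations implicit. Your detailed account---outer one-sided variations from \cite[Lemma~2.1]{D-S} for \eqref{eq:E-L-eq-minimiser}, inner variations by the flow of $X\in\Gamma(TC_0)_*$ (tangential to $\pC$ and vanishing at $P_j^\pm$, hence preserving $\Hgamma$) for \eqref{eq:stat.eq-min}, and comparison with the competitor $\bar u$ for \eqref{est:energy-ineq-EL}---is exactly the intended argument.
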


We remark furthermore that the minimiser $w$ is bounded by 
\beq \label{est:Linfty}
\norm{w}_{L^\infty(C_0)}\leq \norm{\bar u}_{L^\infty(C_0)}  \eeq
so that the $L^\infty$ norm of the map component of the flow is non-increasing in time.
Indeed, if the above estimate would not be satisfied, we could compose $w$ with the nearest point projection  
to the ball with radius $\norm{u}_{L^\infty}$ to obtain a function with smaller energy $\mathcal{F}$.

In the present setting of metrics on a cylinder, 
short-time existence of a solution to the differential equation \eqref{eq:gh} on $\Mneu$ is a simple 
consequence of the fact that $\Hol$ is one-dimensional since this means that  
the evolution of 
the metric could be expressed as a system of (in total $7$) ordinary differential equations. 
As such it is easy to check that the projection satisfies the following Lipschitz- estimates

\begin{lemma}\label{lemma:Lip-Projection}
Let 
 $K$ be a compact subset of the set of admissible metrics 
 $\Mneu=\{g=h^*_{b,\phi}G_\ell, \ell\in (0,\infty), (b,\phi)\in \Om_h\}$ and let $P^{\Var}_g$ be the $L^2$-orthogonal projection onto $\Var(g):=T_g\Mneu$.
Then 
 $$\norm{P^\Var_{g_1}(Re(\Psi_1))-P^\Var_{g_2}(Re(\Psi_2))}_{C^k}\leq C\norm{g_1-g_2}_{C^k}\cdot \norm{\Psi_1}_{L^1(C_0)}
 +C\norm{Re(\Psi_1)-Re(\Psi_2)}_{L^1(C_0)}$$
 for all $g_{i}\in K$, quadratic differentials $\Psi_i$ on $(C_0,g_i)$, $i=1,2$ and $k\in \N$, where $C$ depends only on $k$ and $K$.
Here and in the following 
the $C^k$ norms are computed with respect to a fixed coordinate chart. 
\end{lemma}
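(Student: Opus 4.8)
The plan is to reduce everything to a finite-dimensional computation, since $\Mneu$ is parametrised by the seven real coordinates $(\ell, b, \phi) \in (0,\infty) \times \Om_h$. First I would fix the compact set $K \subset \Mneu$ and note that it corresponds to a compact set of parameters $(\ell, b, \phi)$ bounded away from the boundary $\abs{b^\pm} = 1$ and away from $\ell = 0, \infty$. The tangent space $\Var(g)$ has the explicit basis given in \eqref{eq:VS-variations}: the one-dimensional piece $Re(\Hol(g))$, spanned by (the pullback under $h_{b,\phi}$ of) $c\,dz^2$ in collar coordinates, together with the six Lie-derivative directions $L_{h_{b,\phi}^*X_a}g$ for a basis $\{X_a\}_{a=1}^6$ of $\mathcal{X}(b,\phi)$. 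All seven of these basis elements depend on $g$ (equivalently on the parameters) in a way that is smooth — indeed real-analytic — as maps into $C^k(C_0)$ for every $k$, because the collar metrics $\rho_\ell^2(ds^2+d\theta^2)$, the horizontal reparametrisations $f_\ell$ of Lemma \ref{lemma:horizontal-family}, and the diffeomorphisms $h_{b,\phi}$ all depend smoothly on their parameters. This is the input I would establish first, with the dependence estimated on the fixed coordinate chart.

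Next I would write the orthogonal projection explicitly via the Gram matrix. Let $e_1(g), \dots, e_7(g)$ denote the basis of $\Var(g)$ just described, and let $G(g) = \big(\la e_i(g), e_j(g)\ra_{L^2(C_0,g)}\big)_{i,j}$ be the $7\times 7$ Gram matrix. Then for a symmetric $2$-tensor $\sigma$ on $(C_0,g)$,
\[
P^\Var_g(\sigma) = \sum_{i,j} \big(G(g)^{-1}\big)_{ij}\, \la \sigma, e_j(g)\ra_{L^2(C_0,g)}\; e_i(g).
\]
On the compact set $K$ the Gram matrix is uniformly positive definite (the $e_i$ are a genuine basis and $K$ is compact), so $g \mapsto G(g)^{-1}$ is Lipschitz in $C^k$-parameter-norm with constant depending only on $K$. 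Applying this formula with $\sigma = Re(\Psi_i)$ and $g = g_i$, the difference $P^\Var_{g_1}(Re(\Psi_1)) - P^\Var_{g_2}(Re(\Psi_2))$ decomposes, by a standard telescoping of products, into: (a) terms where the Gram-inverse factors differ, controlled by $\norm{g_1 - g_2}_{C^k}$ times $L^2$-pairings of $\Psi_1$ against the (uniformly bounded in $C^k$) $e_j$, which are bounded by $\norm{\Psi_1}_{L^1(C_0)}$ since $\abs{e_j}_g$ is uniformly bounded; (b) terms where the basis elements $e_i(g_1) - e_i(g_2)$ or $e_j(g_1) - e_j(g_2)$ appear, again controlled by $\norm{g_1 - g_2}_{C^k}$ and $\norm{\Psi_1}_{L^1}$; and (c) the single term $\la Re(\Psi_1) - Re(\Psi_2), e_j(g_2)\ra_{L^2(C_0, g_2)}\,(G(g_2)^{-1})_{ij}\, e_i(g_2)$, which is bounded by $C\norm{Re(\Psi_1) - Re(\Psi_2)}_{L^1(C_0)}$. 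Collecting these yields exactly the claimed estimate.

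The one point requiring a little care — and the step I expect to be the main obstacle — is bookkeeping the fact that the $L^2(C_0,g)$ inner product itself depends on $g$ both through the density $dv_g$ and through the metric used to contract tensors; so ``$\la \sigma, e_j(g)\ra$'' hides a $g$-dependence that must be differentiated/compared along with everything else. This is handled by writing all pairings out in the fixed coordinate chart as integrals of polynomial expressions in the components of $g$, $g^{-1}$, $\sqrt{\det g}$, $\Psi$, and the $e_j$, and then using that on $K$ all of $g, g^{-1}, \sqrt{\det g}$ and their $C^k$-derivatives are uniformly bounded and Lipschitz in the parameters, while the $\Psi$-dependence enters linearly and is therefore controlled in $L^1$. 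The only subtlety is that $\sigma = Re(\Psi)$ is merely $L^1$ (continuous up to the boundary but with no quantitative bound), which is why the estimate is stated with $L^1$ norms of $\Psi$ and of $Re(\Psi_1) - Re(\Psi_2)$ rather than anything stronger; since all the ``test'' factors $e_j(g)$ and the Gram-inverse entries are bounded in $C^k \subset L^\infty$, the Hölder pairing $L^\infty \cdot L^1$ closes the argument. No regularity of $\Psi$ beyond integrability is needed, and the finite-dimensionality of $\Hol$ is what makes the whole scheme elementary.
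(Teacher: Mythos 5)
Your proof is correct and is essentially the argument the paper has in mind: the paper notes that $\Hol$ is one-dimensional, so the metric evolution is governed by a system of seven ODEs, and asserts the Lipschitz estimate for the projection is then ``easy to check''; your Gram-matrix computation, together with the telescoping of the difference, is precisely the finite-dimensional verification being appealed to. The only minor point worth flagging explicitly, which you use implicitly, is that the seven basis tensors are genuinely linearly independent uniformly over $K$ (so the Gram matrix stays uniformly invertible); this is guaranteed by the $L^2$-orthogonality of $Re(\Hol(g))$ to the Lie-derivative directions (Lemma \ref{lemma:splitting}), together with the lower bounds and orthogonality relations among the $L_{Y}G$ in Lemma \ref{lemma:variations-parameter} and the direct-sum statement of Lemma \ref{lemma:diffeo-properties-2}.
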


We remark that the real part of the Hopf-differential can be written equivalently as
\beq \label{eq:Hopf-coord}
Re(\Phi(u,g))=2u^*g_{\R^n}-\abs{du}_g^2\cdot g
\eeq 
so that we can bound the differences of Hopf-differentials by 
\beq 
\norm{Re(\Phi(u,g))-Re(\Phi(\tilde u,\tilde g))}_{L^1}\leq C\norm{g-\tilde g}_{C^0}+C\norm{u-\tilde u}_{H^1}\eeq
for a constant $C$ that depends only on a bound for the energies $E(u,g)$, $E(\tilde u,\tilde g)$.

Combined, Lemmas \ref{lemma:ex-minimiser} and \ref{lemma:Lip-Projection}  thus 
imply that solutions of the time discretisation scheme exist for as long as 
the injectivity radius $inj(C_0,g)=2\ell$ of the domain $(C_0,g)$ is bounded away from zero and infinity and the parameters $(b,\phi)$ 
remain in a compact set of $\Om_h$.

We furthermore remark that the energy of such an approximate solution is non-increasing, namely on the open interval $(t_k,t_{k+1})$ it 
decreases by
\beqas  
E((u^h,g^h)(t_k))-E((u^h(t_k),g^h(t_{k+1}))&=\int_{[t_k,t_{k+1}]}
\norm{\pt g^h}_{L^2(C_0,g^h)}^2 dt\\
&=\frac1{16}\int_{[t_k,t_{k+1}]} \norm{P^\Var(Re(\Phi(g^h(t), u^h(t_k))}_{L^2(C_0,g^h)}^2 dt
\eeqas
while at $t_{k+1}$ there is a further loss of energy of no less than 
$$E(u^h(t_k),g^h(t_{k+1}))-E((u^h,g^h)(t_{k+1}))\geq \frac1{2h}
\norm{u^h(t_k)-u^h(t_{k+1})}^2_{L^2(C_0,g^h(t_{k+1}^{h}))},$$
compare \eqref{est:energy-ineq-EL}. 

All in all we can thus estimate 
\beq\label{est:energy-ineq-approx-scheme}
E((u^h,g^h)(t_k))\leq 
E((u^h,g^h)(t_{\tilde k}))- \int_{t_{\tilde k}}^{t_k}
\halb\norm{D_t^{h} u^h}_{L^2(C_0,\tilde g^h)}^2
+  \norm{\pt g^h}_{L^2(C_0,g^h)}^2 dt\eeq
for $\tilde k<k$ where compute the norm of the difference quotient  
$$D_t^{h} u^h(x,t)=\tfrac1{h} \cdot(u^h(x,t+h)-u^h(x,t))$$
with respect to the piecewise constant curve of metrics 
$\tilde g^h\vert_{[t_k,t_{k+1})}=g^h(t_{k+1})$.

We obtain in particular that the length of the curve of metrics is bounded uniformly by 
$$L_{L^2}(g^h\vert_{ [0,t]})\leq  E_0^{1/2} t^{1/2} \text{ for every }h>0,$$ 
$E_0$ an upper bound on the initial energy $E(u(0),g(0))$.

Consequently, to any given $(u_0,g_0)$ there exist numbers $\eps>0$, $T>0$ and $\bar C<\infty$ such that for every parameter $h>0$ the
solution of the time-discretisation scheme exists at least on the interval $[0,T]$
and so that the metric component $g^h=h_{b,\phi}^*G_{\ell}$ satisfies estimates of the form
\beq \label{ass:inj-apriori} 
\inj(C_0,g^h)=2 \ell\geq \eps \text{ and } \abs{b_j^\pm}\leq 1-\eps,\quad \abs{\phi^\pm}\leq \bar C \eeq
on this interval. \cmt{add reference}

Similarly we could obtain an upper bound for $\ell$ on $[0,T]$, but we remark 
that $\ell$ 
is indeed bounded from above uniformly in time 
in terms of only the initial energy. 
Namely, let $\delta_\Gamma:=\dist(\Gamma_+,\Gamma_-)>0$ be the distance between our prescribed disjoint boundary curves. Then the 
energy of any map $w\in H^1_\Gamma(C_0)$ with respect to a metric $g$ of the form $g=h_{b,\phi}^*G_\ell$
is bounded from below by
\beqas
E(w,h_{b,\phi}^*G_\ell)
& =E(\tilde w,G_\ell)=\frac12\int_0^{2\pi}\int_{-Y(\ell)}^{Y(\ell)}\abs{\tilde w_s}^2+\abs{\tilde w_\theta}^2 ds d\theta\\
&\geq \frac{ c}{Y(\ell)}\big(\int_0^{2\pi}\int_{-Y(\ell)}^{Y(\ell)}\abs{\tilde w_s} ds d\theta\big)^2
\geq \frac{\tilde c \delta_\Gamma^2}{Y(\ell)}, 
\eeqas
for some $\tilde c>0$ and $\tilde w=w\circ h_{b,\phi}^{-1}$. The resulting lower bound on $Y(\ell)$ results in an upper bound for $\ell$ and thus the injectivity radius
of the form 
\beq\label{est:apriori-upper-ell}
\ell\leq \bar L(E_0),\eeq
where $\bar L(E_0)$ depends only on an upper bound $E_0$ on the initial energy $E(u_0,g_0)$ (and as usual 
the geometric setting and the fixed number $\eta$).

This uniform upper bound on $\ell$ and the resulting 
control on the metrics $G_\ell$ near the boundary  $\partial C_0$ will allow us to 
 derive a priori bounds for the map component near
 $\partial C_0$ that are independent of $\inj(C_0,g)$. This will be crucial for the asymptotic analysis (in the degenerate case) carried out later on. 
  Conversely, all estimates near the boundary will depend on $b$ as the case $\abs{b_\pm}\to 1$ corresponds to the degeneration 
 (in collar coordinates) of the three-point-condition.

To prove that the above time-discretisation scheme converges to a solution of the flow, we need to derive a priori estimates for the map component where we will distinguish between 
\begin{itemize}
\item the interior of the cylinder where standard estimates for the heat equation apply
\item the boundary region away from the points $P_k^\pm=(\pm1,\theta_k)$, on which we shall be able to appeal to results of Duzaar-Scheven \cite{D-S}
\item the region near the points $P_k^\pm$ at which the three-point-condition is imposed.
\end{itemize}

 \subsection{A priori estimates in the interior and near general boundary points}
 Let $\bar u$ be any fixed map, $h>0$, let $g=h_{b,\phi}^*G_\ell$ be a metric as in Lemma \ref{lemma:horizontal-family} and 
 let $u$ be a minimiser of the functional  
  $\mathcal{F}_{g,\bar u}^h=E(u,g)+\tfrac1{2h}\norm{u-\bar u}_{L^2(C_0,g)}^2.$
  
Then, setting $f=\frac1h( u-\bar u)$ we know 
that 
\beq 
\label{eq:EL-minimiser-f}
\int_{C_0}\langle du,dv\rangle dv_g+\int_{C_0}v \cdot f dv_g\geq 0
 \text{ for } v\in T^+_u\Hgamma.\eeq
 In particular
$\Delta_g u=f$  in the interior, so standard elliptic estimates combined with the upper bound \eqref{est:apriori-upper-ell}
on $\ell$ yield

\begin{lemma}
\label{lemma:interior-minimiser}
Given any $\ell_0>0$, $E_0<\infty$ and any $\delta>0$ there exists a constant $C<\infty$
such that the following holds true. Let 
$u\in \Hgamma$ be any map of energy  $E(u,g)\leq E_0$ 
which satisfies \eqref{eq:EL-minimiser-f} for some $f\in L^2(C_0)$
and $g\in \Mneu$ with $2\inj(C_0,g)>\ell_0$. Then
\beq \label{est:H2-int} 
\int_{[-1+\delta,1-\delta]\times S^1}\abs{\na_g^2 u}^2 +\abs{d u}_g^4 dv_g\leq C\cdot \big(E(u,g)+\norm{f}_{L^2(C_0,g)}^2).\eeq
\end{lemma}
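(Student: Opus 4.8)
The plan is to reduce the statement to a purely interior elliptic regularity estimate for the equation $\Delta_g u = f$, which holds on the compact subcylinder $[-1+\delta/2,1-\delta/2]\times S^1$ once we know $u$ satisfies this equation weakly there with no boundary interference. The point is that \eqref{eq:EL-minimiser-f} restricted to variations $v$ supported in the interior is a genuine equality (interior variations in both directions are admissible, since $T_u^+\Hgamma$ contains all of $C_c^\infty(\interior C_0,\R^n)$ as a linear subspace), so $u$ is a weak solution of $\Delta_g u=f$ in $\interior C_0$ and elliptic regularity applies without any boundary conditions. Thus the only genuine content is the $L^4$ gradient bound; the $\abs{\na_g^2 u}^2$ bound then follows from the standard Calderón–Zygmund estimate once the right-hand side $f\in L^2$ and the metric is controlled.

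First I would fix the geometry: by Lemma~\ref{lemma:horizontal-family} and the parametrisation of $\Mneu$, a metric $g=h_{b,\phi}^*G_\ell$ with $2\inj(C_0,g)=\ell>\ell_0$ and $\ell\leq\bar L(E_0)$ (this upper bound is the a priori estimate \eqref{est:apriori-upper-ell}, which is why $E_0$ enters) has conformal factor $\rho_\ell^2$ uniformly bounded above and below, together with all its derivatives, on the compact middle region $[-1+\delta/2,1-\delta/2]\times S^1$ — here one uses that $\rho_\ell(s)=\ell/(2\pi\cos(\ell s/2\pi))$ stays away from its poles at $\pm Y(\ell)$ because $Y(\ell)$ is bounded below in terms of $\bar L(E_0)$, and the diffeomorphisms $h_{b,\phi}$ are controlled on compacta uniformly for $\abs{b^\pm}\leq 1-\eps$; but actually, since we only claim a constant depending on $\ell_0,E_0,\delta$, it suffices to note $g$ is uniformly comparable to a fixed flat metric with all geometric quantities bounded on this compact set. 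Consequently the Laplacian $\Delta_g$ is, in these fixed coordinates, a uniformly elliptic operator with smooth coefficients bounded in $C^k$ for every $k$.

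Next, the key $L^4$ bound. Since $\abs{du}_g^2\,dv_g$ is conformally invariant and $\Vol(C_0,g)$ on the middle region is bounded, the $L^2$ bound $\int \abs{du}_g^2\,dv_g\leq 2E_0$ translates into a fixed $H^1$ bound for $u$ in the flat coordinates. Then $\Delta_g u=f$ with $f\in L^2$, together with $H^1$ control, gives $u\in H^2_{loc}$ by Calderón–Zygmund, hence $du\in H^1_{loc}\embed L^4_{loc}$ (two-dimensional Sobolev embedding) on the slightly smaller region $[-1+\delta,1-\delta]\times S^1$, with the quantitative bound $\norm{du}_{L^4}^2\leq C(\norm{u}_{H^2}^2)\leq C(E_0+\norm{f}_{L^2}^2)$ — where the interior $H^2$ estimate is $\norm{u}_{H^2([-1+\delta,1-\delta]\times S^1)}\leq C(\norm{\Delta_g u}_{L^2}+\norm{u}_{H^1})$ on the larger region. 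Rewriting in geometric terms, $\int\abs{\na_g^2 u}^2+\abs{du}_g^4\,dv_g\leq C(E(u,g)+\norm{f}_{L^2(C_0,g)}^2)$; the constant absorbs the (bounded) conformal factors.

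The main obstacle is bookkeeping rather than conceptual: one must be careful that the constants in the conformal comparison depend only on $\ell_0$ (lower injectivity bound) and on $E_0$ (through the a priori upper bound $\ell\leq\bar L(E_0)$ of \eqref{est:apriori-upper-ell}, which keeps us away from the poles of $\rho_\ell$) and on $\delta$, and not on $b,\phi$ in the interior — this is where one invokes that $h_{b,\phi}$ acts trivially enough on the middle of the cylinder, or more simply that on $[-1+\delta/2,1-\delta/2]\times S^1$ away from the boundary the diffeomorphism parameters can only contribute bounded distortion, which can be absorbed. One should also double-check that the interior equality in \eqref{eq:EL-minimiser-f} is genuinely an equality (not just $\geq$): for $v\in C_c^\infty(\interior C_0)$ both $\pm v$ lie in $T_u^+\Hgamma$, giving $\int\la du,dv\ra + \int v\cdot f\,dv_g=0$, i.e. $\Delta_g u=f$ weakly in the interior, which is exactly what the elliptic machinery needs. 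Everything else is standard.
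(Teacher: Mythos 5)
Your overall strategy is exactly the one the paper invokes (the paper gives it as a one-line remark): compactly supported interior variations $\pm v$ are both admissible, so \eqref{eq:EL-minimiser-f} is an equality in the interior, $\Delta_g u = f$ weakly there, and one applies standard interior Calder\'on--Zygmund estimates plus the two-dimensional Sobolev embedding $H^1\embed L^4$. Two of your geometric justifications are off, though, and worth fixing.

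First, $\rho_\ell$ has no poles at $s=\pm Y(\ell)$: since $\tfrac{\ell}{2\pi}Y(\ell)=\tfrac{\pi}{2}-\atan(\eta\ell)$, one has $\rho_\ell(\pm Y(\ell))=\sqrt{1+\eta^2\ell^2}/(2\pi\eta)$, which is finite. The degeneration you actually need to rule out lives at the \emph{central geodesic}: $\rho_\ell(0)=\ell/(2\pi)\to 0$ as $\ell\to0$, and this is precisely why the hypothesis $2\inj(C_0,g)=\ell>\ell_0$ is there (cf.\ the paper's remark after the lemma that the region of degeneration shrinks to $\{0\}\times S^1$). The role of the upper bound $\ell\leq\bar L(E_0)$ from \eqref{est:apriori-upper-ell} is to give a $C^k$ bound on $G_\ell$ in fixed coordinates on $[-1+\delta,1-\delta]\times S^1$, not to keep away from poles near the boundary.

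Second, the claim that ``the diffeomorphism parameters can only contribute bounded distortion'' on the middle region is false whenever $\delta<1/4$: the cut-offs in \eqref{def:diffeos} make $h_{b,\phi}$ nontrivial already on $\{3/4\leq\abs{x}\leq 1\}$, and the angular derivative of the M\"obius restriction $f_{b^\pm}$ blows up as $\abs{b^\pm}\to 1$ (compare Lemma \ref{lemma:variations-parameter}). The clean argument for $b,\phi$-independence of the constant is that every quantity in \eqref{est:H2-int}, as well as the equation $\Delta_g u=f$ and the norms $E(u,g)$, $\norm{f}_{L^2(C_0,g)}$, is invariant under pullback by a diffeomorphism, and $h_{b,\phi}$ is of the form $(x,\theta)\mapsto(x,\cdot)$, hence maps the region $[-1+\delta,1-\delta]\times S^1$ onto itself; replacing $(u,g,f)$ by $(u\circ h_{b,\phi}^{-1},G_\ell,f\circ h_{b,\phi}^{-1})$ therefore reduces everything to the case $g=G_\ell$, for which the geometry on the fixed subcylinder is controlled purely in terms of $\ell_0$, $\bar L(E_0)$, $\delta$. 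With these two corrections your proof matches the paper's intent.
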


We remark that 
the region in which $G_\ell$ degenerates as $\ell\to 0$ is contained in what corresponds to 
arbitrarily small cylinders 
$[-\delta,\delta] \times S^1$ with respect to the fixed coordinates 
$(x,\theta)\in C_0=[-1,1]\times S^1$ since we use hyperbolic rather than flat metrics to represent a conformal class. Therefore  
\begin{rmk}
The analogue of \eqref{est:H2-int} is valid with a constant independent of $\ell_0$ 
on every compact region  of $(-1,1)\times S^1\setminus (\{0\}\times S^1)$
and for every metric $g\in \Mneu$ as well as for metrics $h_{b,\phi} G_{\ell=0}$ described in \eqref{eq:G0}.
\end{rmk}

Near the boundary but away from the points $P_j^{\pm}$ we can use the results of Duzaar-Scheven \cite{D-S}
which apply to more general (in particular non-linear) equations. 
Namely, as explained in appendix \ref{sect:Courant}, we can derive the following a priori estimates from Theorem 8.3 of \cite{D-S}

\begin{prop}\label{prop:Duzaar-Scheven}
For any $b_0<1$, $\ell_0>0$ and $E_0<\infty$ there exist constants $\eps_1,r_0>0$ and $C<\infty$ such that the following holds true.
Let $g=h_{b,\phi}^*G_\ell$ for some $\ell\geq \ell_0$ and $\abs{b^\pm}\leq b_0.$ 
Suppose furthermore that 
$f\in L^2(C_0,g)$ and that $u\in \Hgamma$ has energy $E(u,g)\leq E_0$.
Then, if $u$ is so that \eqref{eq:EL-minimiser-f} is satisfied for all variations 
$v\in T^+H_{\Gamma,*}^1(C_0)$ with support in a ball $B_r^g(p)$, where 
$p\in C_0$ and  $r\in(0,r_0)$ are such that  
$$B_r^g(p)\cap \{P_j^\pm\}=\emptyset$$
and if the energy on this ball is small in the sense that 
$$E(u,B_r^g(p)):=\halb\int_{B_r^g(p)}\abs{du}^2 dv_g\leq \eps_1$$
then 
$u\in H^2(B_{r/2}^g(p),g)$ with
\beq  \label{est:D-S} \int_{B_{r/2}^g(p)}\abs{\na^2_g u}^2 +\abs{\na_g u}^4\,dv_g\leq \frac{C}{r^2}E(u,B_r^g(p))+C\int_{B_r^g(p)}\abs{f}^2 dv_g.\eeq
\end{prop}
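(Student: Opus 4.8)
\textbf{Proof proposal for Proposition \ref{prop:Duzaar-Scheven}.}

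The plan is to deduce the estimate directly from the boundary regularity theory of Duzaar--Scheven \cite{D-S}, which is stated for a fixed reference domain (a half-disc) with a fixed reference metric. First I would fix a point $p$ with $B_r^g(p)\cap\{P_j^\pm\}=\emptyset$ and distinguish two cases: either $B_r^g(p)$ lies in the interior of $C_0$ — in which case the conclusion is already contained in Lemma \ref{lemma:interior-minimiser} together with the Remark following it, after possibly shrinking $r_0$ so that such balls stay in a compact subset of $(-1,1)\times S^1\setminus(\{0\}\times S^1)$ — or $B_r^g(p)$ meets $\pC$. In the latter case the point $p$ is at controlled distance from $\pCpm$ but bounded away from the three special points $P_j^\pm$, and the idea is to straighten the boundary: compose with the conformal chart of Lemma \ref{lemma:conf} and the explicit diffeomorphism of Lemma \ref{lemma:horizontal-family}, followed by $h_{b,\phi}^{-1}$, to map a fixed-size neighbourhood of $p$ in $(C_0,g)$ to a standard half-disc carrying a metric that is conformal to $g_{eucl}$ with conformal factor bounded above and below (the bounds depending only on $\ell_0$, $b_0$, and on how far $p$ is from $\{P_j^\pm\}$, which in turn is controlled once $r_0$ is chosen small). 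Since the Dirichlet energy, the heat-type inequality \eqref{eq:EL-minimiser-f}, and the monotone Plateau boundary condition are all conformally natural in dimension two, the transplanted map $\tilde u$ solves on the half-disc exactly the variational inequality to which Theorem 8.3 of \cite{D-S} applies, with right-hand side $\tilde f$ whose $L^2$ norm is comparable to that of $f$.

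The next step is to verify the hypotheses of \cite[Theorem 8.3]{D-S} in the transplanted picture: the prescribed boundary curve is one of the fixed $C^3$ Jordan curves $\Gamma_\pm$ with its fixed proper parametrisation $\alpha_\pm$ (here one uses that $p$ is away from the $P_j^\pm$, so no three-point-condition is active inside the ball and the admissible variations there are genuinely all of $T^+H^1_{\Gamma,*}$ localised to the ball), and the smallness condition $E(\tilde u, \tilde B)\le \eps_1$ is just the conformally invariant restatement of $E(u,B_r^g(p))\le\eps_1$. One must also track that the constants $\eps_1$, the admissible radius, and the final constant $C$ coming from \cite{D-S} depend only on the $C^3$ data of $\Gamma_\pm$, on $E_0$, and on the uniform bounds $\ell\ge\ell_0$, $\abs{b^\pm}\le b_0$ (plus the fixed $\eta$), via the two-sided bound on the conformal factor; this is where the role of $b_0<1$ enters, since $\abs{b^\pm}\to1$ is precisely the degeneration of the three-point-condition and would destroy the uniform control of $h_{b,\phi}$ near $\pC$. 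Finally I would pull the half-disc estimate \eqref{est:D-S} back through the bi-Lipschitz-and-conformal change of variables, absorbing the conformal factors and their first two derivatives into $C$, to recover the stated bound on $B_{r/2}^g(p)$; the factor $r^{-2}$ on the energy term appears in the standard way from the scaling of the $H^2$ estimate over balls of radius $r$.

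The main obstacle I anticipate is bookkeeping rather than conceptual: one must check carefully that the explicit changes of coordinates from Lemmas \ref{lemma:conf} and \ref{lemma:horizontal-family} together with $h_{b,\phi}^{-1}$ really do furnish, uniformly over $\ell\ge\ell_0$ and $\abs{b^\pm}\le b_0$ and over all admissible $p$ at the relevant boundary scale, a chart in which the metric is a controlled conformal perturbation of the flat half-disc metric used in \cite{D-S}, with $C^2$ bounds on the conformal factor — in particular that the hyperbolic collar metrics $G_\ell$ do not degenerate in the boundary region (which is exactly the content of the Remark above: degeneration of $G_\ell$ as $\ell\to0$ is confined to a small neighbourhood of $\{0\}\times S^1$, away from $\pC$), and that the diffeomorphisms $h_{b,\phi}$ stay uniformly bi-Lipschitz with uniform $C^2$ bounds on the stated parameter range. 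Once this uniform chart comparison is in place, the Proposition follows from \cite[Theorem 8.3]{D-S} essentially verbatim, which is why the detailed argument is deferred to appendix \ref{sect:Courant}.
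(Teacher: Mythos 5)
Your proposal follows essentially the same route as the paper's argument in appendix~\ref{sect:Courant}: dispose of interior balls via Lemma~\ref{lemma:interior-minimiser}, conformally transplant a boundary ball to a half-disc with conformal factor controlled from above and below by $\ell_0$, $b_0$ and $\eta$, invoke \cite[Theorem~8.3]{D-S}, and pull the estimate back while absorbing the conformal factor (and the covering by finitely many half-discs) into~$C$. The one concrete difference is that the paper, after pulling back by $h_{b,\phi}^{-1}$ composed with a fixed conformal chart to the disc, additionally pulls back by the M\"obius transform $M_{b^\pm,\phi^\pm}$: the intermediate map $u\circ h_{b,\phi}$ satisfies the three-point-condition only at the shifted points $f_{b^\pm,\phi^\pm}^{-1}(\theta_k)$, and composing with $M_{b^\pm,\phi^\pm}$ restores it at the standard points so that the new triple lands literally in the class to which \cite[Theorem~8.3]{D-S} is stated to apply. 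You instead argue that, since $B_r^g(p)$ avoids the $P_j^\pm$, the three-point-condition never enters the localised variational inequality, so the correction is unnecessary. That reasoning is sound at the level of the PDE, but you should verify that \cite[Theorem~8.3]{D-S} is stated in a form that does not presuppose the map is three-point-normalised on the whole disc; the paper's extra M\"obius pullback is a cheap way to avoid having to make that check, and costs nothing since it does not affect the uniform two-sided bound on the conformal factor (precisely because $|b^\pm|\leq b_0<1$).
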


Here and in the following we denote geodesic balls in $(C_0,g)$ by $B_r^g(p):=\{\tilde p\in C_0: d_g(\tilde p,p)<r\}$  and 
compute the energy on a geodesic ball with respect to the corresponding metric unless indicated otherwise.

 As we shall use this and the subsequent lemmas to control the map near the boundary, it is important to remark
 \begin{rmk}\label{rem:no-lower-e-bound}
The above result is valid also without imposing a lower bound on $\ell$, and in particular also for the metric $G_0$ defined in \eqref{eq:G0}, as long as one considers only points contained in a compact subset $K\subset C_0\setminus\{0\}\times S^1$ and allows the constants to depend also on this set $K$. 
Similarly, on compact  sets $K\subset\subset  C_0\setminus(\{0\}\times S^1)$ the a priori estimates derived in the subsequent Lemma \ref{lemma:H^2-general-boundary} and  in Corollary \ref{cor:H^1-strong-estimates} are all valid for metrics $h_{b,\phi}^*G_\ell$ with $\ell\geq 0$
and $\abs{b_\pm}\leq b_0<1$, again with a constant that also depends on $K$.
\end{rmk}

 As our target is Euclidean space which 'supports no bubbles', i.e. for which there are no non-trivial harmonic maps from $S^2$,
 we can furthermore 
 exclude a concentration of the energy near general points of the boundary

 \begin{lemma}\label{lemma:H^2-general-boundary}

To any numbers $\Lambda,E_0<\infty$, $b_0<1$, $d, \ell_0>0$ and any $\eps>0$ there exists a radius $r>0$ such that the following holds true.

Let $u\in H^1_{\Gamma,*}(C_0)$ be a map of energy $E(u,g)\leq E_0$ which satisfies \eqref{eq:EL-minimiser-f} for a function 
$f\in L^2(C_0,g)$ with $\norm{f}_{L^2}\leq \Lambda$,
a metric $g=h_{b,\phi}^*G_\ell$, with $\ell\geq \ell_0$ and $\abs{b^\pm}\leq b_0$ and variations $v\in T_u^+\Hgamma$ with $\supp(v)\subset C^*:=C_0\setminus \bigcup_{j,\pm}P_j^\pm$.

Then the energy is small 
\beq \label{est:smallness-energy} 
E(u,B_r^g(p))\leq \eps\eeq
on balls around arbitrary points 
$$p\in C^*(d)=C_{g}^*(d):=C_0\setminus  \bigcup_{j,\pm}B_d^{g}(P_j^\pm).$$
In particular, the estimate 
$$\norm{u}_{H^2(C^*(d))}\leq C\cdot E(u,g)+C\norm{f}_{L^2}^2$$
holds true with a constant $C$ that depends only on $\Lambda,E_0, \ell_0$, $b_0$ and $d$.
 
\end{lemma}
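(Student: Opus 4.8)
The plan is to prove Lemma \ref{lemma:H^2-general-boundary} by a contradiction/compactness argument for the small-energy claim \eqref{est:smallness-energy}, and then to deduce the global $H^2$ bound on $C^*(d)$ by a covering argument combining Proposition \ref{prop:Duzaar-Scheven} (near the boundary), Lemma \ref{lemma:interior-minimiser} together with the Remark after it (in the interior), and the fact that the region where $G_\ell$ degenerates sits in an arbitrarily thin fixed cylinder $\{|x|\le\delta\}\times S^1$, which is harmless because we only need estimates near $\partial C_0$.

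First I would establish the energy-smallness statement. Suppose it fails: then there are sequences $u_i\in H^1_{\Gamma,*}(C_0)$, $f_i$ with $\norm{f_i}_{L^2}\le\Lambda$, metrics $g_i=h_{b_i,\phi_i}^*G_{\ell_i}$ with $\ell_i\ge\ell_0$, $|b_i^\pm|\le b_0$, and points $p_i\in C^*(d)$ with $E(u_i,B_{r_i}^{g_i}(p_i))>\eps$ while $r_i\to 0$. After passing to a subsequence, $\ell_i\to\ell_\infty\in[\ell_0,\bar L(E_0)]$ by \eqref{est:apriori-upper-ell}, $b_i\to b_\infty$ with $|b_\infty^\pm|\le b_0<1$, and (modulo the $2\pi\Z$-translation indeterminacy coming from $\phi^\pm$, which we can normalise away using the Killing field / periodicity exactly as in \eqref{eq:diffeos-asympt}) the metrics converge smoothly on $C_0$ to an admissible limit metric $g_\infty=h_{b_\infty,\phi_\infty}^*G_{\ell_\infty}$; also $p_i\to p_\infty\in C^*(d)$. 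Since $E(u_i,g_i)\le E_0$, a subsequence of $u_i$ converges weakly in $H^1$ and, by the Courant--Lebesgue lemma (appendix \ref{sect:Courant}), the boundary traces are equicontinuous, so $u_i\to u_\infty$ uniformly on $\partial C_0$ and $u_\infty\in H^1_{\Gamma,*}(C_0)$. Now two cases: if $p_\infty$ is an interior point, the interior $H^2$ estimate of Lemma \ref{lemma:interior-minimiser} (valid near $p_\infty$ by the Remark) gives strong $H^1_{loc}$ convergence near $p_\infty$, hence $E(u_i,B_{r_i}^{g_i}(p_i))\to 0$, contradiction. If $p_\infty\in\partial C_0$, then $p_\infty\notin\{P_j^\pm\}$ since $d_{g_\infty}(p_\infty,P_j^\pm)\ge d$; here I would run the standard bubbling analysis at $p_\infty$: if no energy concentrates, Proposition \ref{prop:Duzaar-Scheven} again upgrades weak to strong $H^1$ convergence on a fixed ball and kills the excess energy; if energy concentrates at $p_\infty$, a rescaling (blow-up) produces, in the limit, a nonconstant harmonic map either from $\R^2$ (interior concentration within the ball) or a nonconstant harmonic half-disc solving the free/Neumann boundary condition coming from \eqref{eq:EL-minimiser-f} with boundary on a line (the linearisation of $\Gamma_\pm$ at the concentration point), whose energy is $\ge\eps_1>0$ --- both of which can be doubled/reflected to a nonconstant harmonic sphere, which is impossible since the target is $\R^n$; this contradiction establishes \eqref{est:smallness-energy}.

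Having fixed $r>0$ from the small-energy step, I would then cover $C^*(d)$: split it into an interior part $[-1+\delta,1-\delta]\times S^1$ (minus, if desired, the thin degeneration cylinder, which in any case does not meet a neighbourhood of $\partial C_0$) where Lemma \ref{lemma:interior-minimiser} directly gives $\int |\na_g^2 u|^2+|du|_g^4\le C(E(u,g)+\norm f_{L^2}^2)$, and a collar part near $\partial C_0$ which is covered by finitely many (a number controlled by $r,\ell_0,b_0,d$) geodesic balls $B_{r/2}^g(p)$ with $p\in C^*(d)$ and $B_r^g(p)\cap\{P_j^\pm\}=\emptyset$; on each such ball \eqref{est:smallness-energy} lets us apply Proposition \ref{prop:Duzaar-Scheven} to get $\int_{B_{r/2}^g(p)}|\na_g^2 u|^2+|\na_g u|^4\le \tfrac{C}{r^2}E(u,B_r^g(p))+C\int_{B_r^g(p)}|f|^2$. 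Summing the finitely many local estimates and absorbing overlaps yields $\norm{u}_{H^2(C^*(d))}\le C\,E(u,g)+C\norm f_{L^2}^2$ with $C=C(\Lambda,E_0,\ell_0,b_0,d)$, which is the claim.

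The main obstacle is the bubbling step at a boundary point $p_\infty\in\partial C_0\setminus\{P_j^\pm\}$: one must set up the blow-up so that the rescaled domains converge to either $\R^2$ or a half-plane, check that the weak boundary inequality \eqref{eq:EL-minimiser-f} (whose admissible cone $T^+_u H^1_{\Gamma,*}$ only forces monotone reparametrisations of $\alpha_\pm$, not Dirichlet data) passes to a genuine \emph{free} boundary condition for the limiting harmonic map onto the tangent line of $\Gamma_\pm$ at $p_\infty$, and then invoke the reflection principle to produce a nonconstant harmonic $S^2\to\R^n$ and derive the contradiction; care is also needed that the concentration point does not escape to $P_j^\pm$, which is exactly why the hypothesis restricts to $p\in C^*(d)$ with a fixed $d>0$ and why the three-point-condition nodes must be excised. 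Everything else --- the smooth convergence of the $G_{\ell_i}$ after normalising the $\phi$-translations, the uniform upper bound on $\ell$ from \eqref{est:apriori-upper-ell}, and the finite covering --- is routine given the results already established in the excerpt.
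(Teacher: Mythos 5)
Your overall strategy (contradiction/compactness for the small-energy statement, then a covering argument combined with Proposition \ref{prop:Duzaar-Scheven} and Lemma \ref{lemma:interior-minimiser} for the $H^2$ bound) matches the paper, and your second part (the covering) is essentially identical to the paper's one-line deduction. But there is a real gap in your treatment of the boundary blow-up.

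The paper does \emph{not} simply pass to a weak limit, split into ``$p_\infty$ interior / $p_\infty\in\partial C_0$'', and run ``standard bubbling''. It first proves a nontrivial \emph{Claim}: one may choose radii $r_i\to 0$, points $p_i\in C^*_{g_i}(d/2)$, and $\lambda_i\to\infty$ so that
$E(u_i,B^{g_i}_{r_i}(p_i))=\eps=\max_{p\in B^{g_i}_{\lambda_i r_i}(p_i)}E(u_i,B^{g_i}_{r_i}(p))$.
This is established via an annulus/pigeonhole argument: if the naive maximizers $y_i$ of $p\mapsto E(u_i,B_{r_i}^{g_i}(p))$ over $C^*_{g_i}(d)$ drift to $\partial B^{g_i}_d(P_0^+)$, one decomposes $B_d\setminus B_{d/2}(P_0^+)$ into $O(d/r_i)$ concentric annuli of thickness $2r_i$, observes that at most $K_\eps=2\lfloor E_0/\eps\rfloor$ of them contain an $r_i$-ball with energy $>\eps$, and thereby locates a thick annular region with no small-scale energy concentration in which to place $p_i$. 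This Claim is precisely what gives the uniform control $E(u_i,B^{g_i}_{r_i}(p))\le\eps\le\eps_1$ for all $p$ in the macroscopic neighbourhood $B^{g_i}_{\lambda_i r_i}(p_i)$, which is what makes Proposition \ref{prop:Duzaar-Scheven} applicable everywhere in the rescaled picture and gives uniform $H^2$ bounds on compact subsets of $\R^2$ (resp.\ $\Hyp$) for the rescaled maps. You explicitly acknowledge ``care is also needed that the concentration point does not escape to $P_j^\pm$'' and suggest the restriction $p\in C^*(d)$ handles it — but it does not: the hypothesis only tests $p\in C^*(d)$, while nothing prevents energy from concentrating \emph{inside} $B^{g_i}_d(P_j^\pm)$ at a scale comparable to $r_i$ and at distance $o(1)$ from a test point $p_i$ sitting on $\partial B^{g_i}_d(P_j^\pm)$. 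In that configuration the rescaled maps fail to have uniform $H^2$ bounds near $p_i$ (you would need to apply Proposition \ref{prop:Duzaar-Scheven} on balls for which the small-energy hypothesis is unverified), and the blow-up limit is not a harmonic map — the contradiction never materialises. This is the step the paper's Claim closes, and it is the genuine content of the proof.

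Two minor remarks on your limiting-object analysis in the half-plane case. First, the paper does not reflect: it simply notes that, by the Courant--Lebesgue equicontinuity of the traces, the rescaled boundary trace converges to a \emph{constant}, so the limit is a bounded-energy harmonic map on $\Hyp$ which is constant on $\partial\Hyp$, hence constant. Second, for the same reason your proposed identification of a ``free/Neumann boundary condition on the tangent line'' is not quite what comes out of the rescaling: the trace scale collapses to a point, not to a segment of line, so the constant-boundary route is the natural (and simpler) one. This is a cosmetic difference, however; the substantive gap is the missing blow-up point selection above.
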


\begin{proof}
In order to prove the first part of the lemma we argue by contradiction.
So assume that for some numbers 
$\eps,d>0$, $b_0<1$ and $E_0, \Lambda<\infty $ 
there is a 
sequence of $(u_i,g_i,f_i)$ as in the lemma
and a sequence of radii $\tilde r_i\to 0$ for which
$\sup_{x\in C_{g_i}^*(d)}E(u,B_{r_i}^{g_i}(x))>\eps.$
Here we can of course assume that $\eps\leq \eps_1$, the number of Proposition \ref{prop:Duzaar-Scheven}.

We first prove

\textit{Claim:}
There exist radii $r_i\to 0$, points $p_i\in C_{g_i}^*( d/2)$
and numbers $\lambda_i\to\infty$ so that
$$E(u,B_{r_i}^{g_i}(p_i))=\eps=\max_{p\in B_{\lambda_ir_i}^{g_i}(p_i)}E(u,B_{r_i}^{g_i}(p)).$$

To prove this claim let us first choose points $y_i$ and radii $r_i\to 0$ so that 
$E(u,B_{r_i}^{g_i}(y_i))=\eps=\max_{p\in C^*_{g_i}(d)}E(u,B_{r_i}^{g_i}(p)).$ 
 Then the claim is trivially true for $p_i=y_i$ unless the points $y_i$ converge to the boundary (relative to $C_0$)
 of the set $C_{g_i}^*(d)$ defined in the lemma.

So assume that, after passing to a subsequence, 
$\dist_{g_i}(y_i,P_j^{\pm})\to d $ for one of the point $P_j^\pm$, say for $P_0^+$. 

We then consider 
concentric annuli 
$$A_i^k:=B_{d-2kr_i}^{g_i}\setminus B_{d-2(k+1)r_i}^{g_i}(P_0^+ )$$
constructed so that two balls of radius $r_i$ one having its centre in $A_k$ the other in $A_{k+2}$ are always disjoint. 
Thus the number of such annuli that contain a point $p$ for which $E(u,B_{r_i}^{g_i}(p)) >\eps$ can be no more than 
$K_\eps=2\lfloor\frac{E_0}{\eps}\rfloor$. 

We now choose $N_i\to \infty$ so that for $i$ large $ K_\eps \cdot (N_i+1)r_i\leq d/2$ and observe that 
$B_{d}^{g_i}\setminus B^{g_i}_{d/2}(x_0)$ must contain an annulus $\bigcup_{k=k_i}^{k_i+N_i-1} A_i^k$ of thickness 
$N_i r_i$ 
which does not contain any point $p$ with $E(u,B_{r_i}^{g_i}(p))>\eps$. Possibly reducing $r_i$ so that the maximum of 
$p\mapsto E(u,B_{r_i}^{g_i}(p))$ on $B_{2d}^{g_i}\setminus B_{d-k_ir_i}^{g_i}(x_0)$ is equal to $\eps$ and selecting $p_i$
to be a point at which this maximum is 
achieved then implies the claim.

Based on this claim we now derive a contradiction using a standard blow-up argument where we distinguish between\\
\textit{Case 1:} \quad $\dist_{g_i}(p_i,\pC)r_i^{-1}\to \infty$ for some subsequence 
\\\\
\textit{Case 2:}\quad$\dist(p_i,\pC)r_i^{-1}\leq C$ for some constant $C<\infty$

In the first case, we rescale the maps to maps $v_i(x)=u(\exp_{p_i}^{g_i}(r_ix))$ that are defined on larger and larger balls in $\R^2$.
We also observe that as always in such a bubbling argument the resulting metrics can be written as 
$(\exp_{p_i}^{g_i}(r_i\cdot ))^*g_i=r_i^2 \tilde g_i$ for metrics $\tilde g_i$ that converge locally to the euclidean metric $g_{eucl}$.

In particular, the $H^2$ bounds on subsets of $(C_0,g_i)$ obtained in the previous lemmas 
imply that the $H^2$-norm of the maps $v_i$
on compact subsets of $(\R^2,g_{eucl})$ are bounded uniformly and that 
$\Delta v_i\to 0$ locally in $L^2$ since we have assumed $\norm{\Delta_{g_i}u_i}_{L^2}$ to be bounded.

Thus, after passing to a subsequence, we conclude that $v_i$ converges strongly in $H^1_{loc}$ and weakly in $H^2_{loc}$
to a limit $v_\infty:\R^2\to \R^n$ which is harmonic and has bounded energy and is thus constant. At the same time
$E(v_\infty,D_1(0))=\lim_{i\to\infty} E(v_i,D_1(0),\tilde g_i)=\lim_{i\to\infty} E(u_i, B_{r_i}^{g_i}(p_i))>\eps$
resulting in the desired contradiction. 

In the second case we rescale not around the points $p_j$ themselves, 
but rather around their nearest point projection $\tilde p_j$ to the boundary of $C_0$. The resulting maps, 
defined on larger and larger subsets of the halfplane $\mathbb{H}=\{y\in\R^2, y_2\geq 0\}$, 
satisfy uniform $H^2$ bounds on compact sets of $\mathbb{H}$ and have energy at least $\eps$ in the ball $B_{C+1}(0)\cap \mathbb{H}$.

We again obtain a harmonic limit with bounded energy, now defined only on the halfplane, but
furthermore constant on the axis $\partial \mathbb{H}$, since the maps $u_i\vert_{\pC}$ are 
equicontinous, compare Corollary \ref{lemma:courant-lebesgue-consequence} in the appendix. 
Thus also this limit must be constant leading again to a contradiction. 
This completes the proof of the first part of the lemma.

The second part now immediately follows from 
Proposition \ref{prop:Duzaar-Scheven} and the first part if we choose $\eps=\eps_1$ to be the constant of that proposition.
\end{proof} 
 
A further consequence of Proposition \ref{prop:Duzaar-Scheven} is
 
\begin{cor}\label{lemma:cont}
Let $u\in H^1_{\Gamma,*}(C_0)$ be a function for which 
\eqref{eq:EL-minimiser-f} is satisfied for some 
$f\in L^2(C_0)$ and  $g=h_{b,\phi}^*G_\ell$, $\ell>0$, and for all
variations $v\in T_u^+\Hgamma$ with $\supp(v)\subset C^*:=C_0\setminus \bigcup_{j,\pm}P_j^\pm$. Then $u$ 
is continuous on all of $C_0$, in particular in the points $P_j^\pm$ where the three-point-condition is imposed. 

Furthermore, let $u_i\in H^1_{\Gamma,*}(C_0)$ be any sequence of maps with uniformly bounded energy for which 
\eqref{eq:EL-minimiser-f} is satisfied for functions
$f_i$ with $\sup_i \norm{f_i}_{L^2(M,g_i)}<\infty$ 
and metrics 
 $g_i=h_{b_i,\phi_i}^*G_{\ell_i}$ with 
 $\sup_{i} \abs{b_i^\pm}<1 \text{ and } \inf_i\ell_i>0$ and again for variations $v\in T_{u_i}^+\Hgamma$ with $\supp(v)\subset C^*$.
If no energy concentrates at the points  $P_j^\pm$ in the sense that 
 \beq \label{ass:no-con}\lim_{r\to 0} \sup_{i} E(u_i, B^{g_i}_{r}(P_j^\pm))=0\quad j=0,1,2\eeq
then the maps $u_i$ are equicontinuous on all of $C_0$.
 \end{cor}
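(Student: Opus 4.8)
The plan is to split $C_0$ into a fixed neighbourhood of the six three--point--condition points $P_j^\pm$ and its complement. On the complement everything follows from Lemma \ref{lemma:H^2-general-boundary}: for the single map it gives, for every $d>0$, a bound $\norm{u}_{H^2(C^*(d))}\le C(E(u,g),\norm{f}_{L^2},\ell,b,d)$, hence (via $H^2\embed C^{0,\al}$ in two dimensions) Hölder continuity on $C^*(d)$, and letting $d\downto0$ yields continuity of $u$ on $C_0\setminus\bigcup_{j,\pm}P_j^\pm$. For the sequence, the hypotheses $\sup_i\abs{b_i^\pm}<1$, $\inf_i\ell_i>0$ together with $\sup_iE(u_i,g_i)<\infty$ (which, via \eqref{est:apriori-upper-ell}, also bounds $\ell_i$ above) make the constants in Lemma \ref{lemma:H^2-general-boundary} uniform in $i$, so $(u_i)$ is bounded in $H^2(C^*(d))$ and hence equicontinuous there for every $d>0$. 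Thus it only remains to prove continuity of $u$ at each $P=P_j^\pm$, resp. equicontinuity of $(u_i)$ near $P$.

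For this I would run a Courant--Lebesgue argument. Write $q:=\al_\pm(\theta_j)$. By Corollary \ref{lemma:courant-lebesgue-consequence} the traces $u\vert_\pC$ (resp.\ $u_i\vert_\pC$) are (equi)continuous, so $u(P)=q$ and $\abs{u-q}$ is (uniformly) small on $\pC\cap B_{R_0}^g(P)$ for $R_0$ small. Since near $\pC$ the diffeomorphisms $h_{b,\phi}$ differ from a fixed collar embedding only by the Möbius maps $M_{b^\pm}$ (uniformly bi--Lipschitz for $\abs{b^\pm}\le b_0<1$) and a rotation, which is an isometry of the collar metrics $G_\ell$, the metrics $g$ (resp.\ all $g_i$) are, on a fixed one--sided neighbourhood of $P$, uniformly comparable in $C^k$ to a fixed smooth metric; thus the geodesic half--discs $B_r^g(P)$ with curved boundary $S_r:=\{\dist_g(\cdot,P)=r\}\cap C_0$ behave like Euclidean half--discs. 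Fix $\eps>0$; shrinking $R_0$ I may assume in addition, by the no--concentration hypothesis \eqref{ass:no-con} (resp.\ $E(u,B_R^g(P))\to0$ as $R\to0$ in the single--map case), that $E(u,B_{R_0}^g(P))<\eps^2$ uniformly in $i$ and $CR_0\Lambda<\eps$, where $\Lambda$ bounds $\norm{f}_{L^2}$ (resp.\ $\sup_i\norm{f_i}_{L^2}$). Then for $0<\rho<R_0$ the Courant--Lebesgue Lemma (appendix \ref{sect:Courant}), applied on the half--annulus $B_{R_0}^g(P)\setminus B_\rho^g(P)$, produces a radius $r=r(\rho)\in(\rho,R_0)$ with $\osc(u,S_r)\le C\big(E(u,B_{R_0}^g(P))/\log(R_0/\rho)\big)^{1/2}$; since the endpoints of $S_r$ lie on $\pC\cap B_{R_0}^g(P)$, for $\rho$ small enough $\abs{u-q}\le C\eps$ holds on all of $\partial B_r^g(P)$.

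To carry this into $B_r^g(P)\supseteq B_\rho^g(P)$ I would split $u=h+w$ on $B_r^g(P)$, where $h$ is weakly $g$--harmonic with trace $u$ on $\partial B_r^g(P)$ and $w\in H_0^1(B_r^g(P))$ solves $\Delta_g w=f$ weakly (recall $\Delta_g u=f$ in the interior, which contains $B_r^g(P)$ up to its flat part). The maximum principle gives $\abs{h-q}\le\norm{u-q}_{L^\infty(\partial B_r^g(P))}\le C\eps$ on $B_r^g(P)$, while, the half--disc being convex, elliptic estimates together with the two--dimensional embedding $W^{2,2}\embed C^0$ and rescaling $B_r^g(P)$ to unit size give $\norm{w}_{C^0(B_r^g(P))}\le Cr\norm{f}_{L^2}\le CR_0\Lambda<\eps$. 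Hence $\abs{u-q}\le(C+1)\eps$ on $B_\rho^g(P)$; as $\eps>0$ is arbitrary and $\rho$ may be taken as small as one likes (uniformly in $i$), this gives continuity of $u$ at $P$, resp.\ $\sup_i\osc(u_i,B_\rho^{g_i}(P))\to0$ as $\rho\to0$. Combined with the complement estimates and the standard patching — equicontinuity at the finitely many $P_j^\pm$ fixes a neighbourhood of $\bigcup_{j,\pm}P_j^\pm$ on which all $u_i$ oscillate by $<\eps$, and the uniform $H^2$--bound handles the complementary $C^*(d)$ — one obtains continuity of $u$ on all of $C_0$ and equicontinuity of $(u_i)$ on all of $C_0$.

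The main obstacle is the behaviour at the $P_j^\pm$: there \emph{no} form of \eqref{eq:EL-minimiser-f} is available — the admissible test fields are required to be supported in $C^*=C_0\setminus\bigcup_{j,\pm}P_j^\pm$ — so one cannot bootstrap $H^2$--regularity there as in Lemma \ref{lemma:H^2-general-boundary}. The only structural input near $P_j^\pm$ is the interior equation $\Delta_g u=f$, the pointwise constraint $u(P_j^\pm)=\al_\pm(\theta_j)$, and the Courant--Lebesgue Lemma; the argument works only because the latter, \emph{together with} the no--energy--concentration hypothesis \eqref{ass:no-con}, controls $u$ (uniformly in $i$) on small geodesic circles about $P_j^\pm$, after which the harmonic comparison — robust in the metric since the $g_i$ are uniformly comparable near $\partial C_0$ — pushes the smallness into the half--disc. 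A minor point is that one wants the $B_r^g(P)$ to be convex half--discs so that the $L^2\to C^0$ estimate for $w$ applies, but this is routine for $r$ small.
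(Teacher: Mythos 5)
Your proof is correct, and it takes a genuinely different route from the paper's near the points $P_j^\pm$. Both proofs handle $C_0\setminus\bigcup B_d(P_j^\pm)$ identically via Lemma~\ref{lemma:H^2-general-boundary}, and both use the Courant--Lebesgue Lemma together with the no--concentration hypothesis \eqref{ass:no-con} to control things near the $P_j^\pm$; they diverge in how the smallness on small circles is propagated into the half--disc $B_\rho^{g_i}(P_j^\pm)$. The paper applies the Duzaar--Scheven boundary $H^2$ estimates (Proposition~\ref{prop:Duzaar-Scheven}) to a covering of each half--annulus $B_{5r/4}\setminus B_{3r/4}(P_j^\pm)$, rescales to a fixed annulus, and uses the Sobolev embedding to get $\osc_{\partial B_r^{g_i}}u_i$ small for \emph{every} $r\in(0,r_0]$, then combines with the trace equicontinuity via the elementary observation $\osc_{B_{r_0}}u_i\le\sup_{r\le r_0}\osc_{\partial B_r}u_i+\osc_{B_{r_0}\cap\pC}u_i$. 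You instead find a single good Courant--Lebesgue radius $r\in(\rho,R_0)$, control $u$ on all of $\partial B_r^g(P_j^\pm)$ (arc via Courant--Lebesgue, flat part via trace equicontinuity), and then push the smallness into $B_r$ with a harmonic comparison $u=h+w$ ($h$ harmonic with trace $u$, $w\in H_0^1$ with $\Delta_g w=f$), using the maximum principle for $h$ and a scaled $L^2\to C^0$ elliptic estimate for $w$. Your route is slightly more elementary in that it uses only the interior equation $\Delta_g u=f$ (valid on all of $\mathring C_0\supset C^*\cap\mathring C_0$) rather than the boundary $H^2$ estimates of Duzaar--Scheven near $P_j^\pm$, which, as you note, are exactly what is unavailable at the three--point--condition points; the paper's route reuses machinery already in place and gives, as a byproduct, quantitative oscillation control on every circle $\partial B_r$, not just one good radius. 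Two small points worth keeping in mind but which you already address: the $W^{2,2}\to C^0$ estimate for $w$ on the geodesic half--disc is fine because the corners have interior angle $\pi/2$ (so the domain is convex after the uniform straightening near $\pC$), and the conformal factor $\rho_\ell\circ s_\ell$ is uniformly bounded near $\pC$ (even as $\ell\to0$), so passing between $\Delta_g$ and the Euclidean Laplacian only changes $\norm{f}_{L^2}$ by a bounded factor.
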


 \begin{proof} Let $u_i$ be a sequence of maps as described in the lemma. 
As Lemma \ref{lemma:H^2-general-boundary} yields uniform $H^2$-bounds and thus equicontinuity for the $u_i$'s on
any compact subset of 
$C^*$ it is enough to prove that to any number $\eps>0$ there exists a radius $r_0$ so that 
$$\osc_{B_{r_0}^{g_i}(P_j^\pm)} u_i<\eps,\quad j=0,1,2.
$$

To begin with, we recall from Corollary \ref{lemma:courant-lebesgue-consequence} that 
the traces $u_i\vert_\pC$ are equicontinuous so that 
for $r_0$ sufficiently small 
\beq \label{est:osc-u-infty-1}
\osc_{B_{r_0}^{g_i}} u_i\leq \sup_{r\in (0,r_0]} \osc_{\partial B_{r}^{g_i}}
u_i+\osc_{B_{r_0}^{g_i}\cap \pC} u_i\leq 
\sup_{r\in (0,r_0]} \osc_{\partial B_{r}^{g_i}}
u_i+
\halb \eps\eeq
where $\partial B_{r}\subset \mathring C_0$ is the boundary relative to $C_0$ and where we consider balls with centre $P_j^\pm$ unless indicated otherwise.

We bound the oscillation over $\partial B_r$ 
by deriving suitable
$H^2$-estimates on annuli $A_r^i:=B_{\frac54 r}^{g_i}\setminus B_{\frac34 r}^{g_i}$. 
First of all, by \eqref{ass:no-con} we can assume that $r_0$ is small enough so that $E(u_i, B_{2r_0}^{g_i})<\eps_1$, the number of 
Proposition \ref{prop:Duzaar-Scheven}. 
Then, given any $r\in (0,r_0]$ we cover the above annulus $A_r^i$ by balls $B_{r/4}^{g_i}(x_k)$ so that the corresponding balls with double the radius are contained in 
$B_{2r}^{g_i}\setminus \{P_j^\pm\}$ and so that 
 no point is contained in more than $K$ of these larger balls $B_{r/2}^{g_i}(x_k)$,  $K$ independent of $r$ and $i$.
Since \eqref{eq:E-L-eq-minimiser} is satisfied for  $(u_i,g_i,f_i)$, at least for variations supported on $C^*$, we can apply 
Proposition \ref{prop:Duzaar-Scheven} to bound
\beq \label{est: osc-est-annulus} \int_{B_{r/4}^{g_i}(x_k)}\abs{\na_{g_i}^2 u_i}^2 \,dv_{g_i}
\leq C r^{-2} E(u_i,B_{r/2}^{g_i}(x_k) )+C \norm{f_i}_{L^2(B_{r/2}^{g_i}(x_k))}^2\eeq
and thus also 
\beq \label{est:H2-uinfty} \int_{A_r^i}\abs{\na_{g_i}^2 u_i}^2\, dv_{g_i}
\leq C r^{-2} E(u_i,B_{2r}^{g_i})+C\norm{f_i}_{L^2(B_{2r}^{g_i})}^2
\eeq
for a constant $C$ that is independent of $i$.

Observe that while the oscillation is invariant under the rescaling $\tilde u_i(x)=u_i(exp_{P_j^\pm}(rx))$, the left-hand-side of the above estimate transforms as 
 \beqa \int_{A_1(0)}\abs{\na^2_{g_{eucl}} \tilde u_i }^2 dx 
 &\leq Cr^2 \int_{A_r}\abs{\na_{g_i}^2 u_i}^2\,dv_{g_i}
 +C E(u_i,B_{2r}^{g_i})\\
 &\leq C E(u_i,B_{2r_0}^{g_i})+Cr^2\norm{f_i}_{L^2(B_{2r_0}^{g_i})}^2.
 \eeqa
 Applying the Sobolev embedding theorem on a suitable subset of the fixed half-annulus $A_1(0)\subset\R^2$ thus allows us to conclude that 
  $$\big(\osc_{\partial B_r^{g_i}} u_i\big)^2\leq C\cdot \int_{A_1(0)}\abs{\na_{g_{eucl}}^2 \tilde u_i}^2 +\abs{d \tilde u_i}^2 dx 
  \leq 
  C E(u_i,B_{2r_0}^{g_i})+Cr^2\norm{f_i}_{L^2(B_{2r_0}^{g_i})}^2$$
  for every $r\in (0,r_0]$
  again with constants that are independent of $i$. 
  
  Since the $L^2$-norms of $f_i$ are uniformly bounded and since we assumed that there is no concentration of energy at the points $P_j^\pm$ we can thus choose $r_0$ small enough so that the above expression is less than $\eps/2$ as desired.   
 \end{proof}
 
 \begin{rmk}\label{rem:no-lower-e-bound-2}
We observe that the claim of the above corollary remains true on arbitrary compact regions 
 of $C_0\setminus (\{0\}\times S^1)$ also without the assumption of a lower bound on $\ell$. Similarly, as all arguments are carried out locally, knowing that $u$ satisfies \eqref{eq:E-L-eq-minimiser} for variations supported in a set 
 $U\setminus \bigcup_{j,\pm}P_j^\pm$ is sufficient to conclude that $u$ is continous on every compact subset $K$ of $U$ 
 with modulus of continuity depending only on $K$ and the bounds on $\abs{b^\pm}$, $E(u,g)$  as well as the local $L^2$-norm of $f$.
 \end{rmk}

\subsection{No concentration of energy at points $P_j^{\pm}$.}
As the flow of metrics is determined in terms of the (non-local) projection of the Hopf-differential onto $\Var$,
we need to exclude the possibility that a non-trivial amount of energy (and thus possibly of $L^1$ norm of $\Phi$) is concentrating 
near one of the points $P_j^\pm$. Such a concentration of energy would be lost in a limiting process meaning that we could not expect that 
the evolution of the limiting metric would be described by the projection of the limiting Hopf-differential.

To this end we prove the following key lemma 

\begin{lemma}\label{lemma:key-est}
To any given numbers $\Lambda, M,E_0<\infty$, $b_0<1$ and $\eps, d_0>0$ there exists a radius $r>0$ such that 
the following holds true. Let 
$u\in \Hgamma$ be a map with energy $E(u,g)\leq E_0$ that is bounded by $\norm{u}_{L^\infty} \leq M$ and that 
weakly solves the differential inequality \eqref{eq:EL-minimiser-f} for a metric 
$g=h_{b,\phi}^*G_\ell$ for which $\abs{b^\pm}\leq b_0$ and for a function $f$ with $\norm{f}_{L^2(C_0,g)}\leq \Lambda$. 
Then the estimate 
$$E(u,B_r^g(x_0))<\eps$$
holds true for\textit{ every } point $x_0=(x,\theta)\in  C_0$ with $\abs{x}\geq d_0$, in particular for $x_0=P_j^\pm$.
\end{lemma}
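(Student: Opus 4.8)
The plan is to argue by contradiction and to refine the blow-up analysis already used in the proof of Lemma~\ref{lemma:H^2-general-boundary}. Note first that for points $x_0=(x,\theta)$ with $|x|\geq d_0$ that additionally stay a fixed distance away from all of the $P_j^\pm$, the assertion is already contained in Lemma~\ref{lemma:H^2-general-boundary}: the hypothesis $|x|\geq d_0$ confines us to a compact subset of $C_0\setminus(\{0\}\times S^1)$ on which, by the Remark following Lemma~\ref{lemma:interior-minimiser} and by Remark~\ref{rem:no-lower-e-bound}, all of the a priori estimates hold uniformly in $\ell\geq 0$, so that no lower bound on the injectivity radius is needed. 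It thus suffices to exclude a concentration of energy at the points $P_j^\pm$ themselves, i.e. to show that $E(u,B_r^g(P_j^\pm))<\eps$ for $r$ small, uniformly over all triples $(u,g,f)$ as in the statement.

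So suppose this fails. Then there are $(u_i,g_i,f_i)$ satisfying the hypotheses and radii $\rho_i\to 0$ with, say, $E(u_i,B_{\rho_i}^{g_i}(P_0^+))\geq\eps$, and we may assume $\eps\leq\eps_1$, the threshold of Proposition~\ref{prop:Duzaar-Scheven}. Running the point-selection from the proof of Lemma~\ref{lemma:H^2-general-boundary} we obtain radii $r_i\to 0$, points $p_i\to P_0^+$ and scales $\lambda_i\to\infty$ with
$$E(u_i,B_{r_i}^{g_i}(p_i))=\eps=\max_{p\in B_{\lambda_i r_i}^{g_i}(p_i)}E(u_i,B_{r_i}^{g_i}(p)),$$
and we rescale by $v_i(y)=u_i(\exp_{\hat p_i}^{g_i}(r_i y))$, where $\hat p_i=p_i$ if $\dist_{g_i}(p_i,\partial C_0)/r_i\to\infty$ and $\hat p_i$ is the nearest point of $\partial C_0$ to $p_i$ otherwise; since $|x|\geq d_0$ the rescaled metrics converge locally to $g_{eucl}$ on $\R^2$ resp.\ on the halfplane $\mathbb{H}$. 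As $\Delta_{g_i}u_i=f_i$ holds in the interior with $\|f_i\|_{L^2}\leq\Lambda$, the $v_i$ satisfy $\Delta v_i\to0$ in $L^2_{loc}$, so every subsequential limit is harmonic in the interior. If $\dist_{g_i}(p_i,P_0^+)/r_i\to\infty$ the rescaled three-point at $P_0^+$ escapes to infinity and the argument is exactly that of Lemma~\ref{lemma:H^2-general-boundary}: the interior estimates and Proposition~\ref{prop:Duzaar-Scheven} (applicable on all unit balls thanks to the max property and $\eps\leq\eps_1$), together with the equicontinuity of the traces coming from the Courant--Lebesgue lemma and the three-point-condition (Corollary~\ref{lemma:courant-lebesgue-consequence}), yield strong $H^1_{loc}$-convergence to a harmonic map on $\R^2$ resp.\ on $\mathbb{H}$ with constant boundary values, hence to a constant, contradicting $E(v_\infty,B_1)=\lim_i E(u_i,B_{r_i}^{g_i}(p_i))=\eps>0$.

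The genuinely new case is $\dist_{g_i}(p_i,P_0^+)\leq Cr_i$, in which the rescaled point $\tilde P$ corresponding to $P_0^+$ stays in a fixed ball of $\partial\mathbb{H}$. Away from $\tilde P$ the balls avoid $P_0^+$ and carry energy $\leq\eps\leq\eps_1$, so by Proposition~\ref{prop:Duzaar-Scheven} and the interior estimates $v_i\to v_\infty$ strongly in $H^1_{loc}(\overline{\mathbb{H}}\setminus\{\tilde P\})$, with $v_\infty$ harmonic in the interior. Moreover the traces $v_i|_{\partial\mathbb{H}}$ are equicontinuous, and by the three-point-condition $\varphi_{\pm,i}(\theta_k)=\theta_k$ combined with the Courant--Lebesgue estimate they converge locally uniformly to the constant $\alpha_+(0)$; hence $v_\infty$ equals this constant on $\partial\mathbb{H}\setminus\{\tilde P\}$. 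Since $v_\infty$ is bounded by $M$ by \eqref{est:Linfty} and has finite energy, $\tilde P$ is removable, and after reflecting across $\partial\mathbb{H}$ in the constant boundary value $v_\infty$ extends to a bounded harmonic map on $\R^2$, hence is constant.

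It remains to derive a contradiction from the concentrated energy $\eps$, and this is the main obstacle. If the rescaled concentration point stays a fixed distance from $\tilde P$, then strong $H^1$-convergence on a ball around it forces that energy to vanish, contradiction. The delicate situation is that the bubble may form right at $\tilde P$, i.e.\ on the boundary exactly at the three-point $P_0^+$: there Proposition~\ref{prop:Duzaar-Scheven} supplies no boundary $H^2$-bound, because admissible variations supported near $P_0^+$ are forced to vanish at $P_0^+$, so only weak $H^1$-convergence is available near $\tilde P$ and the Courant--Lebesgue bound on the trace controls but does not by itself kill the boundary energy there. To close this case the plan is, after recentering at $P_0^+$ at scale $r_i$, to observe that the only remaining possibility is a bubble sitting on the boundary at $\tilde P$ whose limiting boundary trace is the constant $\alpha_+(0)$ (by the same equicontinuity argument, the three-point-condition pinning the value $\alpha_+(0)$ at $P_0^+$ while the surrounding trace values converge to it), so that the bubble, being a finite-energy harmonic map on $\mathbb{H}$ with constant boundary data — or, if interior, on $\R^2$, where $\R^n$ carries no nontrivial bubbles — is itself constant; the concentrated energy is therefore neither carried by this bubble nor, by the max property at scales up to $\lambda_i r_i$ and the absence of neck energy, lost into $\tilde P$ or into scales tending to infinity. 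Here the uniform $L^\infty$-bound \eqref{est:Linfty} is used both to obtain honest limits and to remove the boundary point $\tilde P$, and I expect the stationarity identity \eqref{eq:stat.eq-min}, tested against radial vector fields vanishing at $P_0^+$, to provide the Pohozaev-type control needed to upgrade weak to strong energy convergence near $\tilde P$ and thereby rule out the residual loss of energy.
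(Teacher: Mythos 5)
Your reduction to the points $P_j^\pm$ and the contradiction setup (a sequence $(u_i,g_i,f_i)$ concentrating energy $\geq\eps$ at $P_0^+$ along radii $r_i\to 0$) match the paper's. But the blow-up route you then take does not close, and you identify the obstruction yourself without resolving it: in the case where the concentration occurs on the boundary at the rescaled three-point $\tilde P$, Proposition~\ref{prop:Duzaar-Scheven} provides no $H^2$ control at $P_0^+$, so you only get weak $H^1$ convergence there and the concentrated energy can simply disappear in the limit. Your final paragraph is explicitly speculative (``I expect the stationarity identity \ldots to provide the Pohozaev-type control''), and as stated there is no derivation of a Pohozaev identity, no control of the boundary terms that would arise (the trace of $|du|^2$ on $\partial C_0$ is not a priori controlled near $P_0^+$), and no argument that such an identity would actually force strong convergence. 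This is a genuine gap, not a detail to be filled in.

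The paper's proof avoids blow-up entirely. After extracting the local limit $u_\infty$ (continuous near $\partial C_0$ by Corollary~\ref{lemma:cont}), it constructs a diffeomorphism $\Phi$ of $\R^n$ (Lemma~\ref{lemma:make-boundary-flat}) that straightens $\Gamma_+$ to its tangent line near $p_0=\alpha_+(\theta_0)$ with $\|d^2\Phi\|_{L^\infty}\leq C$, and uses it to build an \emph{admissible} variation
$u_i^\eps=\Phi^{-1}\big[\Phi(u_i)+\eps\lambda_i(\Phi(u_\infty)-\Phi(u_i))\big]$,
where $\lambda_i$ is a cut-off supported in a small ball around $P_0^+$. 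Because $\Phi$ maps both $u_i|_{\partial C_0}$ and $u_\infty|_{\partial C_0}$ near $p_0$ into the tangent line, the interpolation respects the Plateau condition and the three-point-condition, so $w_i=\frac{d}{d\eps}u_i^\eps\in T^+_{u_i}H^1_{\Gamma,*}$ is a legitimate test function. Plugging $w_i$ into \eqref{eq:EL-minimiser-f} yields a leading term $\approx -\int\lambda_i|du_i|^2\,dv\leq -2\eps$ which dominates the error terms (controlled by $\|f_i\|_{L^2}$, $E(u_\infty,B_{2r_0})$, and $\omega_i(r_0)$ via the continuity of $u_\infty$ at $P_0^+$). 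This gives the contradiction directly, with no rescaling and no energy-quantization argument. So the two approaches are genuinely different, and the comparison step the paper uses is precisely what your proposal is missing: it is the mechanism that converts ``$u_\infty$ has small energy near $P_0^+$'' into a variational contradiction for the $u_i$, sidestepping the lack of $H^2$ bounds and strong convergence at $P_0^+$.
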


\begin{proof}
Thanks to Lemma \ref{lemma:H^2-general-boundary}
it is sufficient to establish the claim for the points $ P_j^\pm$, say for $x_0=P_0^+$.

So assume that the claim is wrong because for some fixed numbers 
$\Lambda,M,E_0<\infty$ and $b_0<1$ there exists a number $\eps_2>0$ such that 
there are triples $(u_i,g_i=h_{b_i,\phi_i}^*G_{\ell_i},f_i)$ for which all the assumptions of the lemma are satisfied, but for which energy concentrates at $x_0$ in the sense
that 
$$E(u_i,B_{r_i}^{g_i}(x_0))\geq \eps_2$$
 for a sequence of radii $r_i\to 0$.

 We remark that the diffeomorphisms $h_{b,\phi}$ defined later on in section \ref{sect:diffeos} are such that
 $h_{b,\phi}\equiv h_{b,\tilde \phi}$  in a neighbourhood of $\pC$ if the parameters $\phi^\pm$ and 
$\tilde\phi^\pm$ agree  modulo $2\pi$.
Thus, after passing to a subsequence, the metrics converge smoothly to some 
limiting metric $g=h_{b,\phi}^*G_\ell$, $\ell\geq 0$ at least in a neighbourhood $U$ of $\pC$, compare 
appendix \ref{appendix:collar}.

Away from the points $P^\pm_j$ we can apply Lemma \ref{lemma:H^2-general-boundary} to conclude that, after passing to a further subsequence, the maps converge on $U^*:=U\setminus \bigcup_{j,\pm} P_j^\pm$
in the sense that 
  $$u_i\to u_\infty \textit{ weakly in } H^2_{loc}(U^*) \text{ and strongly in } W^{1,p}_{loc}(U^*)\text { for every }p<\infty.$$
Furthermore, the uniform bounds on the energy
imply that the maps $u_i$ converge to $u_\infty$ weakly in $H^1$ on all of $U$ while the uniform $L^2$ bounds on $\norm{f_i}_{L^2}$ 
give weak $L^2$ convergence to a limit $f_\infty$ again on all of $U$. 
Remark that here there is no need to specify with respect to which metrics $g_i$ the norms are computed as the metrics are uniformly equivalent. 

Furthermore the traces $u_i\vert_\pC$  converge uniformly to $u_\infty\vert_{\pC}$ thanks to the equicontinuity obtained from the Courant-Lebesgue Lemma, so that $u_\infty$ can be extended to an element of $H^1_{\Gamma_*}(C_0)$.

We finally remark that the convergence of $(u_i,g_i,f_i)\to (u_\infty, g,f_\infty)$ implies that the differential
inequality \eqref{eq:EL-minimiser-f} is again satisfied 
for  $(u_\infty, g,f_\infty)$ at least for variations supported in $U^*$, see also appendix \ref{sect:Courant}.

 The basic idea of the proof,  working without modification only if the image of $u_i\vert_{\pC\cap B_{r_0}^{g_i}(x_0)}$
 happens to be the subset of a straight line, is now the following.
 
 Since $u_\infty$ is an element of $H^1$, we can choose $r_0>0$ so that 
 $E(u_\infty,B_{2r_0}^g(x_0))$ is far smaller than $\eps_2$ and thus in particular far smaller than the energy of the maps $u_i$ on this ball.
We would thus like to consider variations $u_\eps$ of $u_i$ which have the form 
$u_i+\eps\cdot \lambda (u_\infty-u_i)$, $\lambda$ a cut-off function supported on $B_{2r_0}^{g_i}(x_0)$. 
Then, if we could insert $v=\ddeps u_\eps$ as test-function into \eqref{eq:E-L-eq-minimiser} we would get a contradiction since 
the first term would give a 
a negative contribution of roughly $-\eps_2$ which could not 
be compensated by the second possibly positive term. \\

Of course, having Plateau- rather than Dirichlet-boundary conditions, the maps $u_i+\eps\cdot \lambda (u_\infty-u_i)$
are in general not in $H^1_\Gamma$. To obtain 
an admissible variation we shall thus use the following lemma which is proved later on.
  
\begin{lemma}\label{lemma:make-boundary-flat}
Let $\Gamma$ be a regular closed Jordan curve in $\R^n$ of class at least $C^3$.  
Then there exist constants $\hat r=\hat r(\Gamma)$ and $C=C(\Gamma)$ such that 
for any point $p\in \Gamma$ and any $ r\in (0,\hat r(\Gamma))$
there exists a 
$C^2$-diffeomorphism $\Phi:\R^n\to \R^n$ with  
$\Phi=id$ outside of $B_{2r}(p)$ and $\Phi(p)=p$ which satisfies
\beq \label{est:make-boundary-flat}
 r^2\norm{\Phi-id}_{L^\infty}+  r \norm{d \Phi-id}_{L^\infty}+\norm{d^2 \Phi}_{L^\infty}\leq C\eeq
and which 
straightens out the curve $\Gamma$ in a neighbourhood of $p$ in the sense that 
$$\Phi:\Gamma\cap B_{ r}(p)\to \{p\}+T_p\Gamma.$$
\end{lemma}

Returning to the proof of Lemma \ref{lemma:key-est} we let $\bar r\in (0,\hat r(\Gamma_+))$ be a fixed number that we determine later on. 
Then, as the traces $u_i\vert_{\pC}$ are equicontinuous 
we can choose 
$r_0>0$ small enough so that
\beq \label{cond:choice-r0} 
u_i\big(B_{2r_0}^{g_i}(x_0)\cap \pC \big)\subset B_{\bar r}(p_0)\eeq
where  $p_0:=u_i(x_0)=\alpha^+(\theta_0)\in\R^n$ is prescribed by the three-point-condition.

Let now $\Phi$ be the map given by Lemma \ref{lemma:make-boundary-flat} which straightens out the boundary curve on the ball 
$B_{\bar r}(p_0)\subset \R^n$
and let $\lambda\in C_0^\infty(B_{2r_0}^{g_i}(x_0))$ be a cut-off function which is identically $1$
on $B_{r_0}^{g_i}(x_0)\subset C_0$ and whose derivatives satisfy $\abs{\na_{g_i}^k \lambda_i}\leq Cr_0^{-k}$, $k=0,1,2$. Here the 
constant $C$ is independent of $i$ since the metrics $g_i$ converge smoothly near the boundary.

We then define 
$$u^\eps_i:=\Phi^{-1}\circ \big[\Phi\circ u_i +\eps\cdot \lambda_i \cdot \big( \Phi\circ u_\infty-\Phi\circ u_i\big)\big]$$
and claim that, for $\eps>0$ sufficiently small, this is an admissible variation of $u_i$, i.e. that 
$$u^\eps_i\in H^{1}_{\Gamma,*}(C_0).$$

As $u^\eps_i$ is clearly of class $H^1$, and as $u_i^\eps\equiv u_i$ away from 
$\supp(\lambda_i)$ it is enough to 
show that 
 $u_i^\eps(x_0)=p_0$
 and that 
 $u_i^\eps\vert_{\pC\cap B^{g_i}_{2r_0}(x_0)}$ is a weakly monotone parametrisation of a subarc of $\Gamma$ 
 (namely of  $u_i(\pC\cap B^{g_i}_{2r_0}(x_0)\,)$). 
The first claim is trivial since the three-point-condition is satisfied for all $u_i$'s and thus, since the traces converge uniformly, also for $u_\infty$.
The choice of $r_0$ and the properties of $\Phi$ imply furthermore that the restriction of $\Phi\circ u_i$  to $\pC\cap B^{g_i}_{2r_0}(x_0)$ gives a 
weakly monotone parametrisation of a segment in the tangent $\tang=\{p_0\}+T_{p_0}\Gamma$. 
The same holds true also for
$\Phi\circ u_\infty$ and thus for any interpolation of these two maps so $u_i^\eps$ is indeed an element of $H_{\Gamma,*}^1$. Thus $ w_i:=\ddeps u_i^\eps\in T^+_{u_i}H_{\Gamma,*}^1$ is an admissible test function for the differential inequality 
\beq \label{eq:to-contradict-EL-eq}
\int_{C_0}\langle du_i,dw_i\rangle\, dv_{g_i}+\int_{C_0}f_i\cdot w_i \,dv_{g_i}\geq 0.
\eeq
This will lead to 
the desired contradiction 
for $i$ sufficiently large provided the above construction is carried out on sufficiently small 
balls $B_{r_0}^{g_i}$ and $B_{\bar r}(p_0)$.

To begin with, we remark that 
\beq 
w_i=\ddeps u_i^\eps=\lambda_i\cdot(d\Phi^{-1})(\Phi(u_i))\cdot \big(\Phi(u_\infty )-\Phi(u_i)\big)
\eeq
is supported in the small ball $B_{2r_0}^{g_i}(x_0)$ and bounded
$\norm {w_i}_{L^\infty}\leq C_2$ by a constant depending only on $\Gamma^+$ and the bound $M$ imposed on the 
$L^\infty$ norms of the $u_i$.

We can thus bound the second term in \eqref{eq:to-contradict-EL-eq} by
\beq\abs{ \int_{C_0} f_i\cdot w_i\, dv_{g_i}}\leq C_2\cdot \norm{f_i}_{L^1(B_{2r_0}^{g_i}(x_0))}
\leq C r_0 \norm{f_i}_{L^2(C_0)}\leq C\Lambda r_0<\tfrac{\eps_2}{4} \eeq 
provided $r_0$ is chosen sufficiently small.

The first term of \eqref{eq:to-contradict-EL-eq} on the other hand can be bounded from above by 
\beqa \label{est:main-term-key-lemma}
\int \langle du_i, dw_i\rangle \, dv \leq 
&\int \lambda_i\cdot \bigg\langle \bigg[(d\Phi^{-1})(\Phi(u_i))\cdot d\Phi(u_\infty)\cdot du_\infty-du_i\bigg],  du_i\bigg\rangle  \,dv \\
&+ C \sup_{x\in B^{g_i}_{2r_0}}
\big(\abs{(d^2\Phi)(u_i(x))}\cdot \abs{\Phi(u_\infty(x))-\Phi(u_i(x)) }\big)\cdot \int \lambda_i \abs{du_i} ^2 dv \\
&+\frac{C}{r_0}\int_{\supp (d\lambda_i)}\abs{\Phi(u_\infty)-\Phi(u_i)}\cdot \abs{du_i}  dv \\
&\leq-\big[1-\frac14-C\cdot\om(r_0)\big] \cdot \int \lambda_i \cdot \abs{du_i} ^2 \,dv \\
&+C\cdot E(u_\infty, B_{2r_0}^{g_i})+C\cdot \norm{u_i-u_\infty}_{L^\infty(B_{2r_0}^{g_i}\setminus B_{r_0}^{g_i})}^2+CE(u_i,B_{2r_0}^{g_i}\setminus B_{r_0}^{g_i})
\eeqa
where all balls are to be taken with centre $x_0$, all integrals and norms are computed with respect to $g_i$ and where we set
$$\om_i(r_0):=\sup_{x\in B_{2r_0}^{g_i}}\big(\abs{(d^2\Phi)(u_i(x))}\cdot \abs{u_\infty(x)-u_i(x) }\big).$$

Recall that  $u_i\to u_\infty$ in $W_{loc}^{1,p}(U^*)$ for every $p<\infty$ and that the metrics converge. 
Thus the penultimate term in \eqref{est:main-term-key-lemma} tends to zero as 
$i\to \infty$, and is in particular $\leq \tfrac14\eps_2$ for $i$ large. Furthermore, for $i$ large, the last term in \eqref{est:main-term-key-lemma} is bounded by 
$C E(u_\infty, B_{3r_0}^g(x_0))+\tfrac14\eps_2\leq \frac12 \eps_2$, where
the last inequality holds provided $r_0$ is chosen sufficiently small.

Given that  $\frac12\int \lambda_i \cdot \abs{du_i}^2 \,dv_{g_i}\geq E(u_i, B_{r_0}^{g_i})\geq \eps_2,$ 
we can thus estimate (for $i$ large )
\beqa 
\int_{C_0}\langle du_i,dw_i\rangle\, dv_{g_i}+\int_{C_0}f_i\cdot w_i \,dv_{g_i}
&\leq -(\frac14-C\om_i(r_0))\cdot \int \lambda_i \cdot \abs{du_i}^2 \,dv_{g_i},
\eeqa
which leads to the desired contradiction to  \eqref{eq:to-contradict-EL-eq} provided we show 
that $r_0>0$ can be chosen so that 
\beq \label{claim:om}
C\om_i(r_0)< \frac14 \text{ for $i$ large}.
\eeq

To prove this last claim we recall that 
$d^2\Phi$ vanishes identically outside the ball $B_{2\bar r}(p)$. 
This means that $\om_i$ is obtained as supremum over a set on which the 
oscillation of the function $u_i$ is a priori no more than $4\bar r$, for a number $\bar r$ that we can still reduce if needed. 
This aspect of the construction is crucial
as we have no control on the behaviour of $u_i$ near $x_0$, so could in particular not hope for the 
oscillation of $u_i$ over the full ball $B_{2r_0}^{g_i}$ to be uniformly small. 

For $\bar r>0 $ sufficiently small and $i$ large, we can in particular estimate
\beqa 
C\om_i(r_0)& \leq C \cdot \sup_{B_{2r_0}^{g_i}(x_0)\cap \supp(d^2\Phi\circ u_i)} \abs{u_i-p_0}
+C\cdot \sup_{B_{2r_0}^{g_i}(x_0)}\abs{u_\infty-p_0}\\
 &\leq C \bar r+C\cdot \osc_{B_{4r_0}^g(x_0)} u_\infty \leq \frac{1}{8}+C\cdot \osc_{B_{4r_0}^g(x_0)} u_\infty.
  \eeqa
  
  We finally recall that $u_\infty$ satisfies \eqref{eq:E-L-eq-minimiser} for the function $f_\infty\in L^2$, the 
  limiting metric $g$ and for variations supported in $U^*$. As such Corollary \ref{lemma:cont} and Remark \ref{rem:no-lower-e-bound-2} imply that $u_\infty$ is
  continuous at least in a neighbourhood of $\pC$ and thus in particular in the points $P^\pm_j$.
  
  Carrying out the above argument for a small enough radius $r_0$, which might depend on $u_\infty$ but is independent of $i$, we thus find that \eqref{claim:om} indeed holds.
\end{proof}
It remains to prove

\begin{proof}[Proof of Lemma \ref{lemma:make-boundary-flat}]
Let $\Gamma$ be a $C^3$ closed Jordan curve, let $p_0\in \Gamma\subset \R^n$, let $\tang=p_0+T_{p_0}\Gamma$ be the tangent to $\Gamma$ at 
$p_0$ and let $\pi:\R^n\to \tang$ be the nearest point projection onto $\tang$.

Observe that for $\hat r>0$ chosen sufficiently small, in particular so that $\Gamma\cap B_{3\hat r}(p)$ is connected, this projection induces a $C^{2}$ bijection from 
$\Gamma\cap B_{3\hat r}(p)$ to a segment in $\tang$. 
Furthermore, after possibly reducing $\hat r$, we have that 
$\tang\cap B_{2 r}(p)\subset \pi(\Gamma\cap B_{3 r}(p))$ for all $r\in (0,\hat r).$

We now consider
$\psi:\tang \cap B_{2\hat r}(p)\to \R^n$ defined by 
$\psi=id\vert_{t_{p,\Gamma}}-\big(\pi\vert_{\Gamma\cap B_{3\hat r}(p)}\big)^{-1}$ and claim 
that given any number $r>0$ 
\beq \label{def:Psi}
\Psi=\Psi_r:=id+ \lambda_r\cdot \psi\circ \pi= 
id+\lambda_r\cdot \big[\pi-\big(\pi\vert_{\Gamma\cap B_{3\hat r}(p)}\big)^{-1}\circ \pi\big]
\eeq
gives the desired diffeomorphism. Here 
$\lambda_r \in C^\infty_0(B_{2  r}(p),\R)$  is given by a cut-off function which is 
 identically $1$ on $B_{  r}(p)$ and which satisfies the usual estimates of 
$\abs{D^k\lambda }\leq C  r^{-k}$, $k=0,1,2$. Since 
$\pi(\Gamma\cap B_{2  r}(p))\subset 
B_{2  r}(p)\cap t_{p,\Gamma}\subset \pi(\Gamma\cap B_{3\hat r}(p))$ the map $\Psi$ is well defined for any radius  $r\in(0,\hat r)$.

To prove that $\Psi$ has the properties we asked for in Lemma \ref{lemma:make-boundary-flat} we first remark that 
$\pi(p)=p$ and thus $\psi(p)=0$, i.e. $\Psi(p)=p$. More
generally, given any point $x\in \Gamma\cap B_{  r}(p)$ we obtain that
$$\Psi(x)=x+\lambda_r\cdot \big[\pi(x)-x\big]=\pi(x)$$
so $\Psi$ straightens the curve $\Gamma\cap B_{  r}(p)$ to a line as described in the lemma. 
Since $\lambda_r$ and thus also $\Psi-id$ is supported in 
$B_{2 r}(p)$, it remains to show that the estimate \eqref{est:make-boundary-flat}
claimed in the lemma holds true with a constant independent of $  r$.

Since $\Gamma$ is of class $C^3$ and since we project onto the tangent to $\Gamma$,
an estimate of the form  
$\abs{\pi(x)-x}\leq C  r^2$ is valid for all $x\in \Gamma\cap B_{3  r}(p)$ (recall that $ \Gamma\cap B_{3  r}(p)$ is connected).

We then use that we can write any $y\in  B_{2  r}(p)\cap \tang$ as $y=\pi(x)$ for an $x\in \Gamma\cap B_{3  r}(p)$ to conclude that  $\abs{\psi(y)}=\abs{\pi(x)-x}\leq C  r^2$.
In particular  
$$\norm{\Psi-id}_{L^\infty}\leq C
\sup_{B_{2  r}(p)}\abs{\psi\circ\pi}\leq C\sup_{B_{2  r(p)}\cap\tang} \abs{\psi}\leq C  r^2$$
holds true with a constant $C$ depending only on $\Gamma$. 
We furthermore remark that since the derivative of the function $\psi$ (which is defined only on a line)  vanishes in the point $p$
 there exists a constant $C$ (again depending only on $\Gamma$) so that 
 $\norm{d\psi}_{L^\infty(t_{p,\Gamma}\cap B_{2r}(p))}\leq C r$. We can thus bound
\beqas \norm{d\Psi-id}_{L^\infty}&\leq \norm{d\lambda}_{L^\infty}\cdot \norm{\psi\circ \pi}_{L^\infty}+ \norm{d\psi}_{L^\infty}\cdot  \norm{d\pi}_{L^\infty}\\
&\leq C  r^{-1} \cdot   r^2+C\norm{d\psi}_{L^\infty}\leq C  r
\eeqas
as well as 
\beqas \norm{d^2\Psi}_{L^\infty}&\leq \norm{d^2\lambda}_{L^\infty}\cdot \norm{\psi\circ \pi}_{L^\infty}+ \norm{d\lambda}_{L^\infty}\cdot \norm{d\psi}_{L^\infty}
+ C\norm{d^2(\psi\circ \phi)}_{L^\infty}\leq C.
\eeqas

\end{proof}

An important consequence we can derive from our 
key Lemma \ref{lemma:key-est} is 
\begin{cor}
\label{cor:H^1-strong-estimates}
Let $\Lambda, M<\infty$ and let $K$ be a compact subset of $\Mneu$. Then to every $\eps>0$ 
there exists a constant $\delta>0$ such that the following holds true:

Let $u_1$ and $u_2$ be such that \eqref{eq:EL-minimiser-f} is satisfied for functions
$f_{i}$ with $\norm{f_{i}}_{L^2}\leq  \Lambda$ and metrics $g_{i}\in K$.
Suppose furthermore that $E(u_i,g_i)\leq E_0$ and that $\norm{u_i}_{L^\infty}\leq M, i=1,2$. 
Then if 
$$
\norm{u_1-u_2}_{L^2(C_0,g)}\leq \delta$$
for some $g\in K$ then also
$$ \norm{u_1-u_2}_{H^1(C_0,g)}<\eps.$$
\end{cor}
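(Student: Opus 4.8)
The plan is to argue by contradiction and to reduce the claim to a compactness statement, using that away from the six marked points $P_j^\pm$ the maps satisfy uniform $H^2$ bounds, while Lemma \ref{lemma:key-est} prevents energy from concentrating at the $P_j^\pm$.

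So suppose the corollary were false. Then there would be $\eps_0>0$, a compact set $K\subset\Mneu$, numbers $\Lambda,M,E_0<\infty$, and sequences of maps $u_1^k,u_2^k\in\Hgamma$ satisfying \eqref{eq:EL-minimiser-f} for functions $f_i^k$ with $\norm{f_i^k}_{L^2}\le\Lambda$ and metrics $g_i^k\in K$, with $E(u_i^k,g_i^k)\le E_0$ and $\norm{u_i^k}_{L^\infty}\le M$, such that $\norm{u_1^k-u_2^k}_{L^2(C_0,g^k)}\to 0$ for suitable $g^k\in K$, yet $\norm{u_1^k-u_2^k}_{H^1(C_0,g^k)}\ge\eps_0$. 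Since the metrics in the compact set $K$ are mutually uniformly equivalent, I may compute all norms with respect to a single fixed background metric. The uniform bounds on energy and on $\norm{u_i^k}_{L^\infty}$ bound $u_i^k$ in $H^1(C_0)$, so after passing to a subsequence $u_i^k\weakto u_i^\infty$ weakly in $H^1(C_0)$ and, by Rellich--Kondrachov, strongly in $L^2(C_0)$; from $\norm{u_1^k-u_2^k}_{L^2}\to0$ one gets $u_1^\infty=u_2^\infty=:u^\infty$. It therefore suffices to show that $u_i^k\to u^\infty$ strongly in $H^1(C_0)$, as this contradicts $\norm{u_1^k-u_2^k}_{H^1}\ge\eps_0$.

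The next step is to localise. Fix $\eps_1>0$. Applying Lemma \ref{lemma:key-est} with $d_0=1$ (so that it covers the points $P_j^\pm$, which have $|x|=1$) produces a radius $r=r(\eps_1)>0$, independent of $k$, with $E(u_i^k,B_r^{g_i^k}(P_j^\pm))<\eps_1$ for all $i,k$ and all six points; by uniform equivalence of metrics the same smallness, with a slightly adjusted radius, holds for balls measured in the fixed background metric. Write $C_0$ as the union of $K_r:=C^*(d)=C_0\setminus\bigcup_{j,\pm}B_d(P_j^\pm)$ and the six small balls $B_d(P_j^\pm)$, with $d$ comparable to $r$. On $K_r$ I would invoke Lemma \ref{lemma:H^2-general-boundary} — whose constant depends only on $\Lambda,E_0,b_0,\ell_0$ and $d$, and $\ell\ge\ell_0>0$, $|b^\pm|\le b_0<1$ hold uniformly on the compact set $K$, and note that $C^*(d)$ already contains the full interior, including a neighbourhood of $\{0\}\times S^1$, so that the interior estimate of Lemma \ref{lemma:interior-minimiser} is subsumed — to obtain $\sup_k\norm{u_i^k}_{H^2(K_r)}<\infty$. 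Since $K_r$ is a compact region with Lipschitz boundary, Rellich--Kondrachov gives, along a further subsequence, strong $H^1(K_r)$ convergence of $u_i^k$; the limit is forced to be $u^\infty$ by the weak $H^1(C_0)$ convergence, whence $\norm{u_1^k-u_2^k}_{H^1(K_r)}\to0$.

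On the remaining six balls I have control only of the Dirichlet energy: $\norm{d(u_1^k-u_2^k)}_{L^2(B_d(P_j^\pm))}^2\le 4\big(E(u_1^k,B_d(P_j^\pm))+E(u_2^k,B_d(P_j^\pm))\big)<8\eps_1$, while $\norm{u_1^k-u_2^k}_{L^2}\to0$ from the strong $L^2$ convergence. Adding the contributions of $K_r$ and of the six balls gives $\limsup_k\norm{u_1^k-u_2^k}_{H^1(C_0)}^2\le C\eps_1$, and letting $\eps_1\downarrow0$ yields $\norm{u_1^k-u_2^k}_{H^1(C_0)}\to0$ along the chosen subsequence, the desired contradiction.

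I expect the only genuinely delicate ingredient to be the absence of energy concentration at the points $P_j^\pm$, which is precisely Lemma \ref{lemma:key-est} and is already in hand; everything else is a routine assembly of the interior and near-boundary $H^2$ estimates with Rellich's theorem. The secondary point — that the $H^2$ estimates are allowed to degenerate near $\{0\}\times S^1$ as $\ell\to0$ — causes no trouble here, because $\ell$ stays bounded away from $0$ and $\infty$ on the compact set $K$.
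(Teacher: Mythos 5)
Your proposal is correct and takes essentially the same route as the paper: argue by contradiction, obtain strong $H^1$ convergence away from the six marked points $P_j^\pm$ via the uniform $H^2$ bounds of Lemma \ref{lemma:H^2-general-boundary} (together with the interior estimates and Rellich compactness), and control the contribution of small balls around the $P_j^\pm$ using the no-concentration result of Lemma \ref{lemma:key-est}. The only difference is bookkeeping — you split the $H^1$ norm near $P_j^\pm$ into its gradient part (bounded by the energy via Lemma \ref{lemma:key-est}) and its $L^2$ part (controlled by the strong $L^2$ convergence), whereas the paper bounds the full local $H^1$ norm of each $u_i^k$ at once (using the $L^\infty$ bound and small ball volume for the $L^2$ part); both give the same contradiction.
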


\begin{proof}
To begin with, we remark that all metrics in $K$ are uniformly equivalent  since $K$ is compact. Thus it is sufficient to show the claim for norms $\norm{\cdot}_{L^2}$ and $\norm{\cdot}_{H^1}$ that are computed with respect to some fixed $g\in K$.

We then argue by contradiction. So assume there is a number $\eps_1>0$ and triples $(u_1^k, g_1^k, f_1^k)_{k\in\N}$ as well  $(u_2^k, g_2^k, f_2^k)_{k\in\N}$ so that all the assumptions of the lemma are satisfied (for each $k$) but for which

\beq \label{est:to-cont-cor} \norm {u_1^k-u_2^k}_{L^2(C_0)}\to 0\text{ while } \norm {u_1^k-u_2^k}_{H^1(C_0)}\geq \eps.\eeq

Then, after passing to a subsequence 
and using Proposition \ref{prop:Duzaar-Scheven} and Lemmas \ref{lemma:interior-minimiser} and \ref{lemma:H^2-general-boundary},
we find that locally on $C^*$ the maps
$u_{1,2}^k$ converge to limits $u_{1,2}$ strongly in $H^1$, where by construction these two limits must agree.

In particular, for any fixed number $r>0$ we have
\beq \label{est:difference-H1-contradict}
\norm{u_1^k-u_2^k}_{H^1\big(C_0\setminus \bigcup_{\pm,j}B_{r}^g(P_j^\pm)\big)} \to 0\eeq
as $k\to \infty$. 

We can then choose $r>0$ so small that Lemma \ref{lemma:key-est}, combined with the equivalence of the metrics in $K$,  implies that 
$$\norm{u_i^k}_{H^1(B^{g}_r(x_0))}\leq C\norm{u_i^k}_{H^1(B^{g_i^k}_{Cr}(x_0))} \leq \frac{\eps}{24} \text{ for all } k\in\N, \,i=1,2, \text{ and } x_0\in C_0.$$
Applied for the points $P_j^\pm$ and combined with \eqref{est:difference-H1-contradict}
this contradicts \eqref{est:to-cont-cor}.

\end{proof}

\subsection{Convergence of the time-discretisation scheme}
Given any initial data 
$(u_0,g_0)\in \Hgamma\times \Mneu$  we consider 
 the approximate solutions of Teichm\"uller harmonic map flow 
$(u_j,g_j):=(u^{h_j},g^{h_j})$, $h_j=2^{-j}$, obtained by the time discretisation scheme described in section \ref{sect:time-dicr}
We can analyse the maps $u_j$ using the results of the previous section since 
$u_j(t)$ can be seen as a stationary solution of 
\beq 
\label{eq:E-L-time-discret}
\int \langle du_j(t), dw\rangle \, dv_{\tilde g_j(t)}
+\int D_t^{h_j} u_j(t)\cdot w \,dv_{\tilde g_j(t)} \geq 0 \text{ for }
w\in T_{u_j(t)}^+H^1_{\Gamma,*} \text{ and  } t\in[0,T],
\eeq
where $\tilde g_j$ is piecewise constant so that $\tilde g_j(t)=g_j(t_k^{h_j})$, $t\in [t_k^{h_j}, t_{k+1}^{h_j})$.

Based on the results of the previous sections we can pass to a subsequence, still denoted by 
$(u_j, g_j)$, of approximate solutions which converge to a limiting curve
of maps and metrics $(u,g)$ as described below.

To begin with, we claim that $u_j$ converges uniformly in time with respect to $L^2$ in space, i.e. that 
\beq \label{est:conv-u-linfty} \sup_{t\in[0,T]}\norm{u_j(t)-u(t)}_{L^2}\to 0,\quad \text{ for } j\to \infty\eeq
and that $u\in C^0([0,T],L^2(C_0))$.
Here and in the following there is no need to specify with respect to which metric on $C_0$ the above convergence is to be understood as
all the considered metrics are uniformly equivalent on the interval $[0,T]$ we consider, compare \eqref{ass:inj-apriori}. 

To prove this claim we let
$t\mapsto \tilde u_j(t)$ be piecewise linear with 
$\tilde u_j(t_k^{h_j})=u_j(t_k^{h_j})$ for every $k$.
Then \eqref{est:energy-ineq-approx-scheme} gives 
uniform $C_t^{0,\frac12}L_x^2$ estimates for $\tilde u_j(t)$,
namely 
\beqa
\norm{\tilde u_j(t_2)-\tilde u_j(t_1)}_{L^2}\leq \int_{t_1}^{t_2}\norm{D^{h_j}_t u_j}_{L^2} dt &\leq (t_2-t_1)^{1/2} \big(\int_{t_1}^{t_2}\norm{D^{h_j}_t u_j}_{L^2}^2 dt\big)^{1/2}\\
&\leq (2E_0)^{1/2}(t_2-t_1)^{1/2}. 
\eeqa
Thus, after passing to a subsequence, $\tilde u_j$ converges in $C^0_tL^2_x$  to a limit $u$. 
Furthermore, again by \eqref{est:energy-ineq-approx-scheme},
$$\sup_t\norm{u_j(t)-\tilde u_j(t)}\leq \sup_k\int_{t_k^{h_j}}^{t_{k+1}^{h_j}}\norm{D_t^{h_j} u_j}_{L^2} dt\leq h_j^{1/2}(2E_0)^{1/2}\to 0 \text{ for } j\to \infty$$
leading to \eqref{est:conv-u-linfty}.

Remark that the uniform bounds on the energy furthermore imply that the limiting map is in $L^\infty([0,T], H^1(M,g_0))$
and that the spatial derivatives converge
$$ d u_j \weakto d u \text{ weakly in } L^2([0,T]\times C_0).$$

We recall that the traces of $u_j$ on $\pC$ are uniformly equicontinuous,
so we furthermore have that 
$$u_j(t)\vert_\pC\to u(t)\vert_\pC \text{ uniformly on } \pC \text{ for all } t\in [0,T],$$
where we stress that we do not claim that the rate of this convergence is uniform in time.

Additionally, the energy inequality \eqref{est:energy-ineq-approx-scheme} gives 
uniform $L^2(C_0\times [0,T-h_j])$ estimates for the difference quotients 
$D_t^{h_j}u_j$. Consequently, $u$ is weakly differentiable in time on
$C_0\times [0,T]$ with $D_t^{h_j}u_{j}\weakto \pt u$ in $L^2(C_0\times [0,T])$. 

For the metric component 
$g_j$ we can apply Lemma  \ref{lemma:Lip-Projection} to get uniform $C^{0,1}$ estimates in time with respect to any $C^k$ 
metric in space since the 
$L^1$ norm of the Hopf-differential is bounded in terms of the (non-increasing) energy. 

We can thus get convergence of $g_j\to g$ in $C^{0,\alpha}([0,T],C^k(C_0))$, $\alpha<1$, with the 
limiting curve being again of class $C^{0,1}_t C^k_x$ and thus in particular differentiable in time for almost every $t\in[0,T]$.

We shall now prove that the limit $(u,g)$ obtained in this way gives the desired solution of 
Teichm\"uller harmonic map flow, namely that

\begin{prop}
 \label{prop:convergence-discret}
 Let $(u^{h_j},g^{h_j})$ be a sequence of approximate solutions to a fixed initial data $(u_0,g_0)$ 
 converging as described above to some limiting curve $(u,g)$ as $h_j\to 0$.
 Then  the limit $(u,g)$ is a stationary weak solution of Teichm\"uller harmonic map flow 
 which also satisfies the energy-inequality (for a.e. $t_1<t_2$).
\end{prop}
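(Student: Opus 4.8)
The plan is to verify each defining property of a stationary weak solution for the limit $(u,g)$ by passing to the limit in the corresponding properties of the approximate solutions $(u^{h_j},g^{h_j})$, using the convergences established just above the proposition together with the a priori estimates of Section \ref{sect:short-time}. Concretely, one must check: (i) the regularity class $u\in L^\infty([0,T),H^1_{\Gamma,*})\cap H^1([0,T)\times C_0)$ and $g\in C^{0,1}([0,T),\Mneu)$; (ii) the metric equation \eqref{eq:fluss-metric}; (iii) the map inequality \eqref{eq:fluss-map}; (iv) the stationarity condition \eqref{eq:stationar}; (v) the energy inequality \eqref{eq:energy-cond}; and (vi) interior smoothness. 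Property (i) is essentially already recorded: the uniform $C^{0,1/2}_tL^2_x$ and energy bounds give $u\in C^0_tL^2_x\cap L^\infty_tH^1_x$, the difference quotients $D^{h_j}_tu^{h_j}\weakto\pt u$ in $L^2(C_0\times[0,T])$ give weak time-differentiability, and the Lipschitz estimate of Lemma \ref{lemma:Lip-Projection} gives $g\in C^{0,1}_tC^k_x$ with values in $\Mneu$ since $\Mneu$ is closed under the relevant limits and the parameters $(b,\phi,\ell)$ stay in a compact set by \eqref{ass:inj-apriori}--\eqref{est:apriori-upper-ell}.

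For the metric equation (ii), I would fix $t$, write $\pt g^{h_j}(t)=\tfrac14 P^\Var_{g^{h_j}(t)}(\mathrm{Re}(\Phi(u^{h_j}(\tau_j(t)),g^{h_j}(t))))$ where $\tau_j(t)$ is the left endpoint of the discretisation interval containing $t$, integrate against a test function in time, and pass to the limit: the $C^{0,\alpha}_tC^k_x$ convergence $g^{h_j}\to g$ handles the left-hand side, while on the right-hand side one uses $|\tau_j(t)-t|\le h_j\to 0$, the $C^0_tL^2_x$ convergence of $u^{h_j}$ (hence of $u^{h_j}(\tau_j(t))$), the formula \eqref{eq:Hopf-coord} to control $\mathrm{Re}(\Phi)$ differences in $L^1$, and the Lipschitz continuity of $g\mapsto P^\Var_g$ from Lemma \ref{lemma:Lip-Projection}. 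This yields \eqref{eq:fluss-metric} for a.e.\ $t$ (in fact for all $t$, since both sides are continuous). For the map inequality (iii), I would start from \eqref{eq:E-L-time-discret}, integrate in time against an arbitrary $w\in L^2([0,T],T^+_uH^1_{\Gamma,*})$, and pass to the limit using $du^{h_j}\weakto du$ weakly in $L^2([0,T]\times C_0)$, $D^{h_j}_tu^{h_j}\weakto\pt u$ weakly in $L^2$, and $\tilde g_j\to g$; the subtlety is that the admissible cone depends on $u$, so one has to argue (as in the Duzaar--Scheven framework and appendix \ref{sect:Courant}) that a fixed admissible test direction for the limit $u$ can be approximated by admissible directions $w_j\in T^+_{u^{h_j}(t)}H^1_{\Gamma,*}$, using the uniform equicontinuity of the traces from the Courant--Lebesgue lemma. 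Interior smoothness (vi) follows because $\pt u=\Delta_g u$ in the interior with $g$ smooth, so parabolic bootstrapping applies; alternatively it is inherited from the interior estimates of Lemma \ref{lemma:interior-minimiser} and the Remark following it.

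Finally, for the stationarity condition (iv) I would pass to the limit in \eqref{eq:stat.eq-min} (which holds for each $u^{h_j}(t_k)$), using that $\Delta_{g^{h_j}(t)}u^{h_j}=D^{h_j}_tu^{h_j}$ converges weakly in $L^2$ to $\pt u=\Delta_g u$ and that $\mathrm{Re}(\Phi(u^{h_j},g^{h_j}))\to\mathrm{Re}(\Phi(u,g))$ in $L^1$ by \eqref{eq:Hopf-coord}, with the vector field $X\in\Gamma(TC_0)_*$ fixed and smooth; here one must be slightly careful about whether \eqref{eq:stationar} holds pointwise in $t$ or only in an averaged sense, and I expect to only claim it for a.e.\ $t$ after integrating against a time test function. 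The energy inequality (v) comes from passing to the limit in \eqref{est:energy-ineq-approx-scheme}: the left-hand side converges by $C^0_tL^2_x$ convergence of $u^{h_j}$ plus weak $H^1$ lower semicontinuity, $\int\norm{\pt g^{h_j}}^2$ passes by the $C^{0,\alpha}_tC^k_x$ convergence and \eqref{eq:fluss-metric}, and $\int\tfrac12\norm{D^{h_j}_tu^{h_j}}^2$ is handled by weak lower semicontinuity of the $L^2([0,T]\times C_0)$ norm under $D^{h_j}_tu^{h_j}\weakto\pt u$. I expect the main obstacle to be step (iii): controlling the convex-cone-valued test functions uniformly, i.e.\ constructing the approximating sequence $w_j\in T^+_{u^{h_j}}H^1_{\Gamma,*}$ with $w_j\to w$ strongly enough to pair against the only-weakly-converging $du^{h_j}$ and $D^{h_j}_tu^{h_j}$, which is exactly the place where the Plateau (rather than Dirichlet) nature of the boundary condition and the non-vector-space structure of the constraint set bite hardest; the key inputs are the equicontinuity of traces and the structural description of $T^+_uH^1_{\Gamma,*}$ via weakly monotone reparametrisations together with \cite[Lemma 2.1]{D-S}.
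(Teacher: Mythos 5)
Your overall plan — identify each defining property and pass to the limit term by term — matches the paper's, and your diagnosis that the map inequality (iii) is the hardest step because the test cone $T_u^+ H^1_{\Gamma,*}$ is not a vector space is correct. But the proposal has a genuine gap at a more fundamental level: the convergences you invoke are too weak, and the machinery that actually closes the argument (the set $A_j^\Lambda$ of times where the discrete time derivative is $L^2$-bounded, together with Lemma~\ref{lemma:key-est} and Corollary~\ref{cor:H^1-strong-estimates}) is missing from your account.

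Concretely: for the metric equation you propose to control $\|\mathrm{Re}\,\Phi(u_j,g_j)-\mathrm{Re}\,\Phi(u,g)\|_{L^1}$ via \eqref{eq:Hopf-coord} using only the $C^0_t L^2_x$ convergence of $u_j$. But the estimate following \eqref{eq:Hopf-coord} requires $\|u_j-u\|_{H^1}$, not $\|u_j-u\|_{L^2}$, and the Hopf differential is quadratic in $du$; weak $L^2$ convergence of $du_j$ is useless for $L^1$ convergence of $|du_j|^2$. The same problem reappears in the stationarity passage and — crucially — in passing to the limit for the energy inequality: weak $H^1$ lower semicontinuity gives $E(u(t_1))\leqs\liminf E(u_j(t_1))$, which is the \emph{wrong} direction to let you take $\liminf$ on both sides of $E_j(t_1)-E_j(t)\geqs R_j$; you genuinely need $E_j(t_1)\to E(u(t_1))$, i.e. strong $H^1$ convergence at some initial time. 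And for the test-function approximation in (iii), the appendix lemma that produces $w_j\in T^+_{u_j}H^1_{\Gamma,*}$ with $dw_j\to dw$ in $L^2$ requires strong $H^1$ convergence of $u_j\to u$; the Courant--Lebesgue equicontinuity of traces alone is not enough.

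The paper supplies this strong $H^1$ convergence by observing that $u_j(t)$ is a stationary solution of \eqref{eq:EL-minimiser-f} with inhomogeneity $f=D_t^{h_j}u_j(t)$, and the a priori estimates of Section~\ref{sect:short-time} kick in only when $\|f\|_{L^2}\leqs\Lambda$. That forces the introduction of $A_j^\Lambda=\{t:\|D_t^{h_j}u_j(t)\|_{L^2}\leqs\Lambda\}$ (of measure $\geqs T-CE_0/\Lambda^2$), where Corollary~\ref{cor:H^1-strong-estimates} upgrades the $C^0_t L^2_x$ closeness of the $u_j$ to strong $H^1$ convergence on the whole cylinder, \emph{including} the points $P_j^\pm$ — this last point is essential because $P^\Var_g$ is nonlocal and cannot be localized away from $P_j^\pm$. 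The metric equation is then proved by a Gronwall comparison between $g_j$ and the solution $\hat g$ of the limit ODE, and the map inequality by the modified sequences $\tilde u_j^\Lambda,\tilde f_j^\Lambda$ combined with strong $L^2$ convergence of $d\tilde u_j^{\Lambda_j}$. Without this whole layer, (ii), (iii), (iv) and (v) all have gaps, and in particular the $L^1$ control of the Hopf differentials you claim does not follow from the convergences you cite.
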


We remark that while $g$ is clearly again an admissible curve, 
we need to prove that its derivative is actually 
given by the projection of the Hopf-differential of the limit. 
As the projection operator is non-local, 
for this part of the proof the key lemma \ref{lemma:key-est} and its Corollary \ref{cor:H^1-strong-estimates} are crucial 
to get strong $H^1$ convergence for the map $u$ and thus strong $L^1$ convergence of the Hopf-differential 
on all of $C_0$, in particular also near the points $P_j^\pm$ 
 where Proposition
\ref{prop:Duzaar-Scheven} does not apply.

Conversely, the analysis of the map component can be carried out 
very similarly to the work of Duzaar and Scheven \cite{D-S} and is indeed 
less involved then the corresponding arguments 
since the metric is well controlled and since our equation for the map is linear.

\begin{proof}[Proof of Proposition \ref{prop:convergence-discret}]
We first infer from the energy inequality \eqref{est:energy-ineq-approx-scheme}
that for any $\Lambda<\infty$ the set of times 
$$A_j^\Lambda:=\{t\in [0,T] \text{ so that } \norm{D_t^{h_j}u_j(t)}_{L^2}\leq \Lambda\}$$
has measure 
$\Leb^1(A_j^\Lambda)\geq T-\frac{CE_0} {\Lambda^2}$, so in particular  
$\Leb^1(A^\Lambda)\geq T-\frac{CE_0} {\Lambda^2}$ also
for 
$$A^\Lambda=\limsup A^\Lambda_j=\bigcap_{n=1}^\infty \bigcup_{j=n}^\infty A^\Lambda_j.$$

Recall that the maps $u_j(t)$ satisfy \eqref{eq:EL-minimiser-f} for $f=D_t^{h_j} u_j(t)$ 
so it is precisely bounds of the form 
$\norm{D_t^{h_j}u_j(t)}_{L^2}\leq \Lambda$ that are required 
in order to be able to apply the results derived in the previous section.

We begin by analysing the metric component. To prove that $g$ indeed solves \eqref{eq:evol-g} we show that it agrees with the solution $\hat g(t)\in C^{0,1}_t\Mneu$ of the initial value problem 
$$\pt  \hat g=P_{\hat g}^\Var(Re((\Phi(u,\hat g)) ,\qquad \hat g(0)=g_0.$$

We will prove this claim based on Corollary \ref{cor:H^1-strong-estimates}. 
So let $K$ be the set of metrics $h_{b,\phi}^*G_\ell$ satisfying \eqref{ass:inj-apriori} as well as \eqref{est:apriori-upper-ell} and let $M$ be a bound on 
the $L^\infty$ norm of the initial map which therefore also serves as 
bound for $\norm{u_j}_{L^\infty}$, compare Lemma \ref{lemma:ex-minimiser}. 

Then given any numbers $\eps>0$ and $\Lambda<\infty$ we let $\delta>0$ be the number
given by Corollary \ref{cor:H^1-strong-estimates} and select 
$j_0$ so that 
$$\sup_{t\in [0,T]}\norm{u_j(t)-u_k(t)}_{L^2}\leq \delta \text{ for }  j,k\geq j_0.$$
This implies that
$$\norm{u_k(t)-u_j(t)}_{H^1(C_0)}\leq \eps \text{ for all } t\in  A_j^\Lambda\cap A_k^\Lambda  \text{ and } j,k\geq j_0$$
which in turn yields the same bound for $\norm{u(t)-u_j(t)}_{H^1(C_0)}$ for $t\in A_j^\Lambda\cap A^\Lambda$.

For times in $A_j^\Lambda\cap A^\Lambda$ the 
difference of the Hopf-differentials is thus controlled by 
\beqa 
\norm{Re_{\hat g}(\Phi(u,\hat g)(t))-Re_{g_j}(\Phi(u_j,g_j)(t))}_{L^1}&\leq 
C\cdot \norm{ (g_j-\hat g)(t)}_{C^0}\cdot E_0\\
&\,+C\cdot E_0^\half\cdot \norm{(u_j-u)(t)}_{H^1(C_0)}\\
&\leq C\cdot \norm{ (g_j-\hat g)(t)}_{C^0}+C\cdot \eps,
\eeqa
c.f. \eqref{eq:Hopf-coord}.

Here and in the following constants $C$ may depend on $E_0$, $K$ and $T$ but not on $\eps$ or $j$ unless indicated otherwise.

Based on Lemma \ref{lemma:Lip-Projection} 
we can thus conclude that 
$$\norm{\pt(\hat g-g_j)(t)}_{C^k}\leq C\cdot \norm{(\hat g-g_j)(t)}_{C^k}+C\eps$$
for any $t\in A_j^\Lambda\cap A^\Lambda$ and for $j\geq j_0(\Lambda, \eps)$. 

On the other hand,  we can always bound the norms of 
$\pt g_j$ and $\pt \hat g$ 
by $C\cdot E_0$ and we shall use these trivial bounds on the set 
$[0,T]\setminus (A_{h_j}^\Lambda\cap A^\Lambda)$ on which we cannot apply any of the results of the previous section since we lack the necessary control on
the inhomogeneity of \eqref{eq:EL-minimiser-f}.

Combining these two cases, we obtain that for almost every $t$
$$\norm{\pt(\hat g-g_j)(t)}_{C^k}\leq C\cdot \norm{(\hat g-g_j)(t)}_{C^k}+h_j^\Lambda(t)$$
where
$$h^\Lambda_j=C\cdot \eps+CE_0\chi_{[0,T]\setminus (A^\Lambda_j\cap A^\Lambda)}.$$

Based on Gronvall's Lemma, we can thus conclude that 
for any $t\in [0,T]$ and $j\geq j_0(\eps,\Lambda)$
$$\norm{(\hat g-g_j)(t)}_{C^k}\leq e^{CT}\cdot \int_0^t\abs{h^\Lambda_j} dt\leq C \eps+\frac{C}{\Lambda^2}.$$

Choosing $\Lambda\to \infty$ and $\eps\to 0$ and corresponding values of $j_0(\eps,\Lambda)\to \infty$ 
yields the claim that 
$g^j\to \hat g$ uniformly and thus that $g=\hat g$ is indeed the solution of \eqref{eq:evol-g}.

We now turn to the analysis of the map component where we follow largely the arguments of \cite{D-S}.

To begin with, we observe that for almost every time $t\in[0,T]$ there exists a number 
$\Lambda<\infty$ such that $t\in A^\Lambda$. Choosing a subsequence along which 
$\norm{D_t^{h_j} u_j(t)}_{L^2(C_0)}\to \liminf_{j\to \infty}\norm{D_t^{h_j} u_j(t)}_{L^2(C_0)}\ \leq \Lambda$
we conclude that
$$u_j(t)\to u(t) $$
converges not only strongly in  $L^2$, but thanks to Corollary \ref{cor:H^1-strong-estimates}
indeed strongly in  $H^1$ on all of $C_0$ and,  thanks to Lemma \ref{lemma:H^2-general-boundary},
also weakly in 
$H^2_{loc}(C^*)$ where $C^*:=C_0\setminus \bigcup \{P_j^\pm\}$. We stress that the choice of this subsequence is allowed to depend on the time $t$ we are considering.

We furthermore remark that combining the uniform $H^2$-estimates for $u_j$ valid on subsets 
$\Omega\subset\subset C^*$ 
with the uniform convergence of the metrics $g_j\to g$ yields that also $\Delta_{g_j(t)} u_j(t)\weakto \Delta_{g(t)}u(t)$ weakly in $L^2(\Omega)$ for each such $\Omega$.
Using the $L^2$ bound on $\Delta_{g_j(t)} u_j(t)=D_t^{h_j} u_j$ valid for $t\in A^\Lambda_j$ we
can thus conclude that 
$$\norm{\Delta_{g(t)} u(t)}_{L^2(\Omega)}\leq \lim_{j\to\infty}\norm{\Delta_{g_j(t)} u_j(t)}_{L^2(\Omega)}\leq \Lambda \text{ for each } 
\Omega\subset\subset C^*.$$

Passing to the limit in both the Euler-Lagrange-equation and the stationarity condition, cf. appendix \ref{sect:Courant}, 
 thus yields that $u(t)$ is a stationary solution of 
\eqref{eq:EL-minimiser-f}
for a function $f(t)=\Delta_{g(t)}u(t)$ whose $L^2(C_0,g)$-norm is again bounded by 
$\Lambda$, or indeed more precisely by $ \liminf_{j\to \infty}\norm{D_t^{h_j} u_j(t)}_{L^2(C_0,g_j)}$.

Repeating this argument for a.e. $t\in [0,T]$ we thus obtain a function $f:[0,T]\times C_0$ 
which must have bounded $L^2(C_0\times [0,T])$-norm since 
$$\norm{f}_{L^2(C_0\times [0,T])}^2 \leq \int_0^T \liminf_{j\to\infty} \norm{D_t^{h_j}u_j(t)}_{L^2(C_0)}^2 dt\leq 
\liminf_{j\to\infty} \norm{D_t^{h_j}u_j}_{L^2(C_0\times [0,T])}^2\leq 2 E_0$$
 where the last inequality follows from \eqref{est:energy-ineq-approx-scheme}.
 
We now wish to show that $f$ agrees with the time derivative of $u$.
To this end, proceeding as in \cite{D-S},
we set 
$$\tilde u_{j}^\Lambda(t)=\begin{cases}
                             u(t)&\text{ if } t  \in  B^\Lambda_{j}:=[0,T]\setminus A^\Lambda_{j}\\
                             u_j(t)& \text{ if }t\in A^\Lambda_{j}
                            \end{cases}
\text{ and } \tilde f_{j}^\Lambda(t)=\begin{cases}
                             f(t)&\text{ if } t\in B^\Lambda_{j}\\
                             D_t^{h_j}u^{j}(t)& \text{ if }t\in A^\Lambda_{j}
                            \end{cases}$$
in order to obtain a new sequence of pairs satisfying \eqref{eq:EL-minimiser-f} for $\tilde g_j(t)$
but for which the estimate  
$\norm{\tilde f_j^\Lambda(t)}_{L^2(C_0)}\leq \Lambda$
is now satisfied for every $j$ and every $t\in [0,T]$. 

We first claim that for any sequence $ \Lambda_j\to \infty$
$$\tilde f^{\Lambda_j}_{j}\weakto \pt u \text{ weakly in } L^2( C_0\times[0,T]) \text{ for }j\to \infty, $$
or, as we know that $D_t^{h_j}u_j\weakto \pt u$, equivalently  
$\tilde f^{\Lambda_j}_j-D_t^{h_j}u_j\weakto 0$. Indeed, given any function $\phi\in L^2(C_0\times[0,T])$ 
we have that
\beqa 
 \int_{[0,T]\times C_0}\phi\cdot \big(D_t^{h_j}u_j-\tilde f^{\Lambda_j}_j\big) dv_g\, dt &
 \leq \norm{\phi}_{L^2(B^{\Lambda_j}_j\times C_0)}\cdot 
 \big(\norm{\tilde f^{\Lambda_j}_j}_{L^2([0,T]\times C_0)}+
 \norm{D_t^{h_j}u_j}_{L^2([0,T]\times C_0)}\big)\\
 & \leq CE_0^\halb \norm{\phi}_{L^2(B^{\Lambda_j}_{j}\times C_0)}
 \eeqa
 which tends to zero as $\Lambda_j\to \infty$ since the measure $\Leb(B_j^{\Lambda_j})\to 0$. 

At the same time we claim that $d\tilde u_j^{\Lambda_j}$ converges not just weakly, which would be evident from an argument just as carried out above, but indeed strongly in $L^2([0,T] \times C_0)$ to $d u$.
Indeed, let us first consider 
$d u-d \tilde u_j^\Lambda$ for a fixed number $\Lambda$. 
Since this difference vanishes on $B_j^\Lambda$ and since we can apply Corollary \ref{cor:H^1-strong-estimates} to $ u(t)$ and $\tilde u_j^\Lambda(t)$ for $j$ large enough (so that these
 maps are $L^2$ close) we obtain that 
 \beq \label{est:converg-nabla-utilde1} \norm{d u-d \tilde u_j^\Lambda}_{L^2(C_0\times[0,T])}\to 0 \text{ for every fixed } \Lambda. \eeq
But the set of times $A_j^{\Lambda_j}\Delta A_j^\Lambda\subset B_j^{\Lambda_j}\cup B_j^\Lambda$, on which $d \tilde u_j^\Lambda$ and $d \tilde u_j^{\Lambda_j}$ do not agree,  has measure no more 
 than $C(\Lambda^{-2}+\Lambda_j^{-2})$. Combined with the uniform bound on the energy this means that \eqref{est:converg-nabla-utilde1} suffices to conclude 
 that indeed  $\norm{d u-d \tilde u_j^{\Lambda_j}}_{L^2(C_0\times[0,T])}\to 0$.
 
Thanks to the strong $H^1$ convergence we can furthermore approximate each test function $w\in L^2([0,T],T_u^+\Hgamma)$ by elements
$w_i\in  L^2([0,T],T_{u_i}^+\Hgamma)$ in the sense that $\norm{dw-dw_i}_{L^2(C_0\times [0,T])}\to 0$, compare appendix \ref{sect:Courant}. We thus conclude that 
$$\int_0^T\int_{C_0}du\cdot dw +\pt u\cdot w dv_g dt\geq 0$$
for all such $w$. As we have already shown that $g$ satisfies \eqref{eq:evol-g}, we thus obtain that $(u,g)$ is indeed a weak solution of Teichm\"uller harmonic map flow.

Knowing that $d u_j^{ \Lambda_j}$ converges strongly and not just weakly furthermore implies convergence of the Hopf-differentials $\Phi(u_j,g_j)$ to $ \Phi(u,g)$ 
in $L^1$, allowing us to pass to the limit in the stationarity condition to conclude that \eqref{eq:stationar} holds true for almost every time $t$.

It remains to show that the energy inequality holds true (for almost every pair of times $t_1$, $t_2$.)

So let  $t_1\in [0,T]$ be any time which is contained in one of the sets $A^\Lambda$, $\Lambda<\infty$, i.e. for which there exists a sequence of $h_j$ so that 
$D_t^{h_j}u_j(t_1)$ is bounded in $L^2$. For this subsequence of $u_j(t_1)$, Corollary \ref{cor:H^1-strong-estimates} implies strong $H^1$ convergence 
 and thus in particular that $E((u_j,g_j)(t_1))\to E((u,g)(t_1))$. 

We then recall that $u_j$ satisfies the energy inequality 
$$E(u_j,g_j)(t_1)-E(u_j,g_j)(t)\geq \int_{t_1}^t \norm{\pt g_j}_{L^2(C_0,g_j)}^2 dt+\halb \int_{t_1}^t\norm{\pt u_j}_{L^2(M,\tilde g_j(t))}^2 dt
$$
for all times $t\in [0,T]$, $t>t_1$ and $u_j(t)$ converges at least weakly in $H^1$ to 
$u(t)$ (a further strongly convergent subsequence could be found for almost every $t$ but is not needed). 
We thus obtain that for every $t\geq t_1$
$$E(u,g)(t_1)-E(u,g)(t)\geq \int_{t_1}^t \norm{\pt g}_{L^2(C_0,g)}^2 dt+\halb \int_{t_1}^t\norm{\pt u}_{L^2(C_0,g)}^2 dt.$$
\end{proof}

\section{Long time existence}\label{sect:long-time}

\subsection{A priori estimates for the metric component}
Before we can analyse admissible curves of metrics in more detail we finally need to decide how to select the family of diffeomorphisms $h_{b,\phi}$ 
which we use to compensate for the lost degrees of freedom of the three-point-condition.

Rather than just writing down a possible family, we shall first describe which properties we require in the present context 
of flowing to minimal surfaces.
We will then later give an example of such a family but do not claim that this choice is in any way unique. 

To begin with, in order to obtain solutions of the flow that exist for all times we need the following $L^2$-completeness property:

\begin{lemma}\label{lemma:diffeo-properties-1}
Let $(h_{b,\phi})$ be the family of diffeomorphisms defined in \eqref{def:diffeos}. Assume that
$g(t)=h_{b(t),\phi(t)}^*G_{\ell(t)}$, $t\in [0,T)$, is such that 
the diffeomorphisms $h_{b,\phi}$ become singular as $t\to T$, i.e. so that (at least) one of the values $\abs{b^\pm}\to 1$ or $\abs{\phi^\pm}\to\infty$ as $t\to T$.
Then  
$$ \int_0^T\norm{\pt g}_{L^2(C_0,g)} dt =\infty.$$
\end{lemma}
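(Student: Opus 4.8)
The plan is to show that the pull-back family $h_{b,\phi}$ distorts the $L^2$-geometry of metrics in such a way that any of the degeneracies $\abs{b^\pm}\to1$ or $\abs{\phi^\pm}\to\infty$ forces the $L^2$-length of the curve of metrics to be infinite; equivalently, we will establish a lower bound showing that finite $L^2$-length confines $(b,\phi)$ to a compact subset of $\Om_h$. The starting point is the orthogonal splitting of Lemma \ref{lemma:splitting} together with \eqref{eq:VS-variations}: writing $g(t)=h_{b(t),\phi(t)}^*G_{\ell(t)}$, the velocity $\pt g$ decomposes as $\pt g = \pt^{hor}g + L_{h_{b,\phi}^*X}g$ where $\pt^{hor}g\in Re(\Hol(g))$ accounts for the change in $\ell$ along the horizontal family, and $X\in\mathcal{X}(b,\phi)$ is the generator corresponding to $\pt(b,\phi)$. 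Since these two pieces are $L^2(C_0,g)$-orthogonal, $\norm{\pt g}_{L^2(C_0,g)}^2 = \norm{\pt^{hor}g}_{L^2}^2 + \norm{L_{h_{b,\phi}^*X}g}_{L^2}^2$, so it suffices to bound the second summand from below.

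First I would pull everything back to a fixed model: by diffeomorphism invariance of the $L^2$ norm of a symmetric $2$-tensor, $\norm{L_{h_{b,\phi}^*X}g}_{L^2(C_0,g)} = \norm{L_X G_\ell}_{L^2([-Y(\ell),Y(\ell)]\times S^1, G_\ell)}$. The key estimate is then a quantitative lower bound on this quantity in terms of $\abs{\pt b^\pm}$ and $\abs{\pt\phi^\pm}$, uniform for $\ell$ in the compact range dictated by \eqref{ass:inj-apriori} and \eqref{est:apriori-upper-ell} — wait, we cannot use those a priori bounds here since $\ell$ is not yet controlled; instead the bound must degenerate correctly as $\ell\to0$ or $\ell\to\infty$. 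The cleanest route is: on the collar $G_\ell$ near a boundary component, the generator of the $b^\pm$-motion is (after the explicit description in Section \ref{sect:diffeos}, which I am assuming) comparable to a conformal/translational vector field whose Lie derivative has $G_\ell$-norm bounded below by $c\cdot\frac{\abs{\pt b^\pm}}{1-\abs{b^\pm}}$, while the $\phi^\pm$-motion (a rotation-type field near the boundary) contributes a term bounded below by $c\cdot\abs{\pt\phi^\pm}$ up to a weight that stays bounded away from zero on the relevant region. Integrating in time, finiteness of $\int_0^T\norm{\pt g}_{L^2}\,dt$ would force both $\int_0^T\frac{\abs{\pt b^\pm}}{1-\abs{b^\pm}}\,dt<\infty$ and $\int_0^T\abs{\pt\phi^\pm}\,dt<\infty$; the first gives $\int_0^T\abs{\ddt\log(1-\abs{b^\pm})}\,dt<\infty$, hence $1-\abs{b^\pm}$ stays bounded away from $0$, and the second gives that $\phi^\pm$ stays bounded — contradicting the assumed degeneracy. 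This is a contrapositive formulation of the lemma.

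The main obstacle I anticipate is proving the lower bound $\norm{L_X G_\ell}_{L^2}\geq c\,\frac{\abs{\pt b^\pm}}{1-\abs{b^\pm}}$ (and the companion bound for $\phi$) \emph{uniformly in $\ell\in(0,\infty)$}: one must check that the conformal factor $\rho_\ell$ of the collar metric, together with the explicit form of $h_{b,\phi}$ from Section \ref{sect:diffeos}, conspire so that the weighted integral $\int \rho_\ell^2\abs{L_X(\text{eucl})}^2$ does not collapse as $Y(\ell)\to\infty$ (i.e.\ $\ell\to0$) or $Y(\ell)\to0$ ($\ell\to\infty$). A convenient simplification is to localize: since $h_{b,\phi}$ agrees with a boundary-supported diffeomorphism near each $\pCpm$ (as used in the proof of Lemma \ref{lemma:key-est}), and since $\rho_\ell$ is uniformly comparable to a fixed positive function on a fixed collar neighborhood of $\pC$ independent of $\ell$, one can compute the lower bound entirely on that fixed neighborhood where the model is essentially the disc/half-disc picture of \cite{CL1, D-S}; there the distortion of $L^2$ by a Möbius-type reparametrization with parameter $b$, $\abs{b}<1$, is classically $\sim(1-\abs{b})^{-1}\abs{\pt b}$. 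If one prefers to avoid the explicit family entirely at this stage, the lemma can instead be stated as a required property of $(h_{b,\phi})$ and verified in Section \ref{sect:diffeos}; but assuming \eqref{def:diffeos} is available, the argument above is self-contained modulo the one weighted elliptic estimate.
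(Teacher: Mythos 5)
Your proposal follows essentially the same route as the paper: split $\pt g$ $L^2$-orthogonally into the horizontal part and the Lie-derivative part generated by $\pt(b,\phi)$, pull back to the model $G_\ell$ by diffeomorphism invariance, and establish quantitative lower bounds $\norm{L_{Y_{\abs{b^\pm}}}G_\ell}_{L^2}\gtrsim (1-\abs{b^\pm})^{-1}$ and $\norm{L_{Y_{\phi^\pm}}G_\ell}_{L^2}\gtrsim 1$, from which finiteness of the $L^2$-length contradicts the assumed degeneracy after integrating $\abs{\ddt\log(1-\abs{b^\pm})}$ and $\abs{\ddt\phi^\pm}$. These are exactly the steps in the paper's proof, with the lower bounds supplied there by Lemma \ref{lemma:variations-parameter}.

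The one genuine gap is your handling of the range of $\ell$. You correctly observe that the a priori bound \eqref{est:apriori-upper-ell} is not available inside this lemma, but your replacement --- that $\rho_\ell$ is uniformly comparable to a fixed positive function near $\pC$ independently of $\ell$ --- fails as $\ell\to\infty$: from Lemma \ref{lemma:metric-cyl-1}, $\rho_\ell(s)\geq\ell/(2\pi)\to\infty$, and the collar shrinks ($Y(\ell)\to 0$), so the quantitative constants in your Lie-derivative lower bounds (as well as the upper bound $C_4$ for the $\phi$-direction needed for the orthogonal decomposition to be efficient) are not uniform in $\ell$. Indeed the paper states Lemma \ref{lemma:variations-parameter} only for $\ell<L_0$. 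The fix is exactly what the paper does first: since $\norm{\pt G}_{L^2(C_0,G)}\leq\norm{\pt g}_{L^2(C_0,g)}$ by the same orthogonality you invoke, and since for $\ell$ large \eqref{est:evol-l-large} gives $\abs{\ddt\log\ell}\leq C\norm{\pt g}_{L^2}$, finiteness of $\int_0^T\norm{\pt g}_{L^2}\,dt$ already forces an upper bound $\ell(t)\leq\bar L$ on $[0,T)$; only then do you invoke the $(b,\phi)$-lower bounds with constants depending on $\bar L$. Inserting this preliminary step closes the argument.
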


A further requirement we want to impose in preparation for the asymptotic analysis carried out later in section \ref{sect:asympt} is

\begin{lemma}\label{lemma:diffeo-properties-2}
Let $(h_{b,\phi})$ be the family of diffeomorphisms defined in \eqref{def:diffeos} and  let
$\chi(b,\phi)$ the space of generating vectorfields of $h_{b,\phi}$. Then  
$$\Gamma(TC_0)=\Gamma(TC_0)_*\oplus h_{b,\phi}^*\chi(b,\phi)$$
for all $(b,\phi)\in \Om_{h}:=D_1(0)^2\times \R^2$.
\end{lemma}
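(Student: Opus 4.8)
The plan is to turn the asserted decomposition into the invertibility of a single linear map between two $6$-dimensional spaces. First I would observe that $\Gamma(TC_0)_*$ is precisely the kernel of the evaluation map
$$\mathrm{ev}\colon\Gamma(TC_0)\longrightarrow\bigoplus_{j,\pm}T_{P_j^\pm}\pCpm\cong\R^6,\qquad X\longmapsto\big(X(P_j^\pm)\big)_{j=0,1,2,\ \pm},$$
and that $\mathrm{ev}$ is onto: given prescribed values one writes down $\sum_{j,\pm}c_j^\pm\beta_j^\pm\,\partial_\theta$ with cut-off functions $\beta_j^\pm$ equal to $1$ near $P_j^\pm$ and supported in pairwise disjoint neighbourhoods, which is a smooth section of $TC_0$ tangent to $\pC$ along $\pC$. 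Hence $\Gamma(TC_0)/\Gamma(TC_0)_*$ is $6$-dimensional, and since $\dim\chi(b,\phi)=6$, the decomposition $\Gamma(TC_0)=\Gamma(TC_0)_*\oplus h_{b,\phi}^*\chi(b,\phi)$ is equivalent to the composite $\Psi:=\mathrm{ev}\circ h_{b,\phi}^*\colon\chi(b,\phi)\to\R^6$ being an isomorphism, and by equality of dimensions it suffices to show that $\Psi$ is injective.

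For injectivity I would exploit the product structure of $\pC$. Since $h_{b,\phi}$ maps each boundary circle $\pCpm$ to itself, $(h_{b,\phi}^*X)(P_j^\pm)$ depends only on $X\vert_{\pCpm}$ and on $h_{b,\phi}\vert_{\pCpm}$; moreover $\chi(b,\phi)=\chi^+\oplus\chi^-$, where $\chi^\pm$ consists of the generators obtained by varying $(b^\pm,\phi^\pm)$, which vanish on the opposite boundary circle. In this splitting $\Psi$ is block diagonal, so it is enough to show that each block $A^\pm\colon\chi^\pm\to\R^3$, $X\mapsto\big((h_{b,\phi}^*X)(P_j^\pm)\big)_{j=0,1,2}$, is invertible. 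By the construction of $h_{b,\phi}$ in \eqref{def:diffeos} (whose restriction to $\pCpm$ runs through a $3$-parameter family of M\"obius reparametrisations of that circle), the map $X\mapsto(h_{b,\phi}^*X)\vert_{\pCpm}$ is a linear isomorphism of $\chi^\pm$ onto a $3$-dimensional space of vector fields on $S^1\cong\pCpm$ that is the image, under a fixed circle diffeomorphism, of the model projective algebra $\mathrm{span}\{\partial_\theta,\cos\theta\,\partial_\theta,\sin\theta\,\partial_\theta\}$. The concrete fact I would then invoke is that a non-zero element $w\,\partial_\theta$ of this model algebra, with $w=a_0+a_1\cos\theta+a_2\sin\theta=a_0+r\cos(\theta-\theta_0)$ and $(a_0,a_1,a_2)\neq0$, has at most two zeros on $S^1$ — equivalently the matrix $\big(1\ \ \cos\theta_j\ \ \sin\theta_j\big)_{j=0,1,2}$ is invertible for distinct $\theta_j$, three distinct points on a circle never being collinear. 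Transporting this through the fixed diffeomorphism, evaluation at the three distinct points $P_0^\pm,P_1^\pm,P_2^\pm$ of $\pCpm$ is injective on the relevant $3$-dimensional space, so $A^\pm$ is injective and hence invertible; therefore $\Psi$ is injective and the lemma follows.

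The hard part will not be this linear algebra but the geometric input about $h_{b,\phi}$: once \eqref{def:diffeos} is written down one must verify that on each boundary curve it realises a M\"obius (projective) reparametrisation family depending on the two sets of data independently, and that its six generating vector fields have linearly independent restrictions to $\pC$. These are really design constraints on the family, and it is exactly the projective nature of the boundary action — a nontrivial projective motion of the circle having at most two fixed points — that makes the three points imposed on each curve sufficient to recover all of $\chi(b,\phi)$; a family lacking this property would in general violate the lemma. Granting those properties, the argument above goes through verbatim. (One may equivalently pull everything back by $h_{b,\phi}^{-1}$ and prove $\Gamma(TC_0)=(h_{b,\phi}^{-1})^*\Gamma(TC_0)_*\oplus\chi(b,\phi)$, which is the same computation with $P_j^\pm$ replaced by $h_{b,\phi}^{-1}(P_j^\pm)$.)
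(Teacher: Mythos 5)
Your argument is correct and follows essentially the same route as the paper's sketch: both reduce the decomposition to the statement that evaluation at three distinct boundary points is injective on the three-dimensional M\"obius Lie algebra on each circle $\pCpm$, which you express via the non-vanishing of the determinant $\det\big(1,\cos\vartheta_j,\sin\vartheta_j\big)_j$ and the paper expresses as the full rank of the derivative of $(b,\psi)\mapsto(f_{b,\psi}(\vartheta_j))_j$ at $(a_0,0)$ after using the group property. Your organisation through the evaluation map $\Gamma(TC_0)\to\R^6$ and the $\pm$ block splitting is a cleaner bookkeeping of the same linear-algebra reduction, and the ``design constraint'' you flag at the end — that $h_{b,\phi}\vert_{\pCpm}$ is exactly the M\"obius reparametrisation $f_{b^\pm,\phi^\pm}$ with six independently varying generators supported near the respective boundary — is indeed built into \eqref{def:diffeos}.
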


Here and in the following $\chi(b,\phi)\subset \Gamma(TC_0)$ is the 6 dimensional vectorspace spanned by 
the vectorfields generating the diffeomorphisms $h_{b,\phi}$, i.e. by $Y_{\phi^\pm}(b,\phi)$
characterised by
\beq \label{eq:generating-vf}
\tfrac{d}{d\phi^\pm}h_{b,\phi}(s,\theta)=Y_{\phi^\pm}(b,\phi)(h_{b,\phi}(s,\theta)),\eeq
together with the vectorfields $Y_{Re(b^{\pm})}(b,\phi)$ and $Y_{Im(b^\pm)}(b,\phi)$ defined by the analogue of \eqref{eq:generating-vf}
or, if $b^\pm\neq 0$, equivalently 
together with the vectorfields $Y_{\abs{b^{\pm}}}(b,\phi)$, $Y_{Arg(b^\pm)}(b,\phi)$ corresponding to variations of the absolute value respectively the argument of $b^\pm$.

The final property we shall ask of the diffeomorphisms $h_{b,\phi}$ is that their support is disjoint from the middle geodesic.
This will have the advantage that the modification by these diffeomorphisms does not interfere with the analysis of a possible collapse of the central geodesic, cf. Lemma \ref{lemma:inj}.
It will furthermore prove to be useful to choose the $h_{b,\phi}$ so that the
support of the induced variations of the metrics with respect to the parameters $\ph^\pm$ on the one hand and $b^\pm$ on the other hand
are
disjoint.

\subsubsection{Choice of diffeomorphisms}\label{sect:diffeos}
A simple way of assuring that our diffeomorphisms satisfy Lemma \ref{lemma:diffeo-properties-2} is to choose them as restrictions of M\"obiustransforms on the boundary of $C_0$. 

Given numbers $ \phi^\pm\in\R$ and $ b^\pm\in\C$ with $\abs{b^\pm}<1$ we consider 
the functions $ f_{b^\pm,\phi^\pm}:\R\to \R$ which are induced by the M\"obiustransforms $M_{b^\pm,\phi^\pm}$, 
i.e. chosen so that $f_{0,0}=id$ and $e^{i f_{b^\pm, \phi^\pm}(\theta)}=M_{b^\pm, \phi^\pm}(e^{i\theta})$, where 
$$M_{b,\phi}(z):=e^{i\phi} \frac{z+b}{1+\bar b\cdot z}, \text{ for } b\in D_1(0)\subset \C, \phi\in \R, \, z\in\overline{D_1(0)}\subset \C.$$
 We then extend the maps induced by $f_{b^\pm,\phi^\pm}$ on $\pC$
to a suitable diffeomorphim $h_{b,\phi}$ on the whole cylinder. Namely, we choose $\lambda_{1,2}$ as smooth cut-off functions such that 
$\lambda_1\equiv 0$ on $[-1,\tfrac34]$ with $\lambda_1\equiv 1$ on $[\tfrac78,1]$ while $\lambda_2\equiv 0$ on $[-1,\tfrac12]$ with $\lambda_2\equiv 1$ on $[\tfrac58,1]$. 
We then define $h_{b,\phi}:C_0\to C_0$, $b=(b^-,b^+), \phi=(\phi^-,\phi^+)$ through 
\beq 
\label{def:diffeos} 
h_{b,\phi}(s,\theta)= \big(s, \lambda_1(s)\cdot f_{b^+,\phi^+}(\theta)
+\big(1-\lambda_1(s)\big)\cdot \big(\theta+\lambda_2(s)\cdot \phi^+)\big)\eeq
if $s \geq 0$ respectively by the analogue formula, replacing $(b^+,\phi^+)$ with $(b^-,\phi^-)$ and $s$ by $-s$, if $s\leq 0$.

Since $f_{b,\phi}(\theta)=f_{b,0}(\theta)+\phi$ this formula reduces to 
$h_{b,\phi}(s,\theta)=(s,\theta+\lambda_1\cdot (f_b(\theta)-\theta)+\lambda_2\cdot\phi_+)$
where we write for short $f_b$ for $f_{b,\phi=0}$.

In order to show that this family of diffeomorphisms satisfies Lemma \ref{lemma:diffeo-properties-1} 
we observe that a change of one of the parameters, say of $\abs{b^+}$, induces
a change of the metric of 
$$\tfrac{d}{d\abs{b^+}} \big(h_{b,\phi}^*G\big)=h_{b,\phi}^*L_{Y_{\abs{b^+}}}G=L_{h_{b,\phi}^*Y_{\abs{b^+}}}g$$
for $g=h_{b,\phi}^*G$
and that the resulting Lie derivatives of the collar metrics $G$ satisfy the following estimates

\begin{lemma}\label{lemma:variations-parameter}
Let $(h_{b,\phi})$ be the family of diffeomorphisms defined in \eqref{def:diffeos}, 
let $Y_{\abs{b^\pm}}$, $Y_{Arg(b^\pm)}$ and $Y_{\phi^\pm}$ be its generating vectorfields
and let $(G_\ell)$ be the family of metrics defined in Lemma \ref{lemma:horizontal-family} for some fixed number $\eta>0$. 
Then to any number $L_0<\infty$ there exist constants $C_{1,2,3,4}\in \R^+$  
(depending only on $L_0$ and $\eta$) such that the following estimates hold true for any metric $G=G_\ell$ with $\ell<L_0$ and any $(b,\phi)$ (where we assume that $b^+\neq 0$ for the first estimate)
 
 \beq \label{est:variations-parameters}
 \norm{L_{Y_{\abs{b^+}}}G}_{L^2(C_0,G)}\geq \frac{C_1}{1-\abs{b^+}}-C_2  \quad 
\text{and}  \quad
C_3\leq \norm{L_{Y_{\phi^+}}G}_{L^2(C_0,G)}\leq C_4
 \eeq

Furthermore 
$$L_{Y_{\abs{b^+}}}G, \, L_{Y_{Arg(b^+)}}G \text{ and } L_{Y_{\phi^+}}G $$
are $L^2(C_0,G)$-orthogonal to each other.

\end{lemma}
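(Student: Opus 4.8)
\emph{Set-up and reductions.}
By the reflection $s\mapsto -s$ it suffices to treat the ``$+$'' parameters, and for the bound on $\norm{L_{Y_{\abs{b^+}}}G}$ and for the orthogonality we take $b^+\neq 0$, so that $\Arg(b^+)$ is defined. For $s\geq 0$ the diffeomorphism \eqref{def:diffeos} acts only in $\theta$, $h_{b,\phi}(s,\theta)=(s,H(s,\theta))$ with $H(s,\theta)=\theta+\lambda_1(s)\big(f_{b^+}(\theta)-\theta\big)+\lambda_2(s)\phi^+$ (writing $f_{b^+}=f_{b^+,0}$), so every generating vector field has the form $Y=v(s,\theta)\partial_\theta$; differentiating $H$ gives $v_{\phi^+}=\lambda_2(s)$, independent of $\theta$, while $v_{\abs{b^+}}$ and $v_{\Arg(b^+)}$ carry the factor $\lambda_1(s)$ and hence vanish identically for $s\leq\tfrac34$. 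In the coordinates $(s,\theta)$ the collar metric is rotationally symmetric, $G_\ell=A_\ell(s)^2\,ds^2+B_\ell(s)^2\,d\theta^2$ (not conformal to $ds^2+d\theta^2$ in general), and the collar analysis of Appendix~\ref{appendix:collar} shows that $\{G_\ell:\ell<L_0\}$ extends across $\ell=0$ and is uniformly comparable to $ds^2+d\theta^2$ on $\{s\geq\tfrac12\}$ with uniform bounds on all derivatives, the constants depending only on $L_0$ and $\eta$. For $Y=v(s,\theta)\partial_\theta$ the tensor $L_Y G_\ell$ has vanishing $ds^2$-component, and
$$\abs{L_Y G_\ell}_{G_\ell}^2=2A_\ell^{-2}B_\ell^{2}(\partial_s v)^2+4(\partial_\theta v)^2,\qquad dv_{G_\ell}=A_\ell B_\ell\,ds\,d\theta,$$
with the pointwise scalar product of two such Lie derivatives equal to $2A_\ell^{-2}B_\ell^{2}\,\partial_s v_1\,\partial_s v_2+4\,\partial_\theta v_1\,\partial_\theta v_2$.

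\emph{The three estimates.}
Both orthogonality relations involving $\phi^+$ are immediate, since $L_{Y_{\phi^+}}G_\ell$ is supported in $\{\tfrac12\leq s\leq\tfrac58\}$ (where $\lambda_2'\neq0$) while $L_{Y_{\abs{b^+}}}G_\ell$ and $L_{Y_{\Arg(b^+)}}G_\ell$ are supported in $\{s\geq\tfrac34\}$, and these sets are disjoint. As $\partial_\theta v_{\phi^+}\equiv0$, the display gives $\norm{L_{Y_{\phi^+}}G_\ell}_{L^2(C_0,G_\ell)}^2=\int_{C_0}2A_\ell^{-1}B_\ell^{3}(\lambda_2')^2\,ds\,d\theta$, whose integrand is bounded above and below by positive constants depending only on $L_0$, $\eta$ and the fixed cut-off $\lambda_2$, giving $C_3$ and $C_4$. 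For the lower bound on $\norm{L_{Y_{\abs{b^+}}}G_\ell}$ we discard all but $\{\tfrac78\leq s\leq1\}$, where $\lambda_1=\lambda_2=1$ so that $H(s,\cdot)=f_{b^+}+\phi^+$ is independent of $s$ and $Y_{\abs{b^+}}=v(\theta)\partial_\theta$ with $v=(\partial_{\abs{b^+}}f_{b^+})\circ f_{b^+}^{-1}$ up to a rigid translation; there $\partial_s v\equiv0$, so only the $(\partial_\theta v)^2$ term survives. From $e^{if_{b^+}(\theta)}=M_{b^+,0}(e^{i\theta})$ one computes $f_{b^+}'(\theta)=(1-\abs{b^+}^2)/\abs{e^{i\theta}+b^+}^2$ and $\partial_{\abs{b^+}}f_{b^+}(\theta)=-2\sin(\theta-\Arg b^+)/\abs{e^{i\theta}+b^+}^2$; substituting $u=f_{b^+}^{-1}(\cdot)$ turns $\norm{\partial_\theta v}_{L^2(S^1)}^2$ into $\int_{S^1}\abs{(\partial_{\abs{b^+}}f_{b^+})'(u)}^2/f_{b^+}'(u)\,du$, and an elementary estimate — using that $\abs{e^{iu}+b^+}^2$ is of size $(1-\abs{b^+})^2$ only on an arc of length $\sim 1-\abs{b^+}$ around the repelling fixed point $e^{iu}=-b^+/\abs{b^+}$ — shows this is $\geq c\,(1-\abs{b^+})^{-2}$. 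Since $A_\ell B_\ell\geq c'>0$ on $\{s\geq\tfrac78\}$ uniformly in $\ell<L_0$, this yields $\norm{L_{Y_{\abs{b^+}}}G_\ell}_{L^2(C_0,G_\ell)}\geq C_1(1-\abs{b^+})^{-1}$, the subtracted $C_2$ (superfluous here) leaving room for lower-order terms. I expect this explicit estimate for the Möbius boundary maps to be the main point of the proof.

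\emph{The last orthogonality.}
It remains to show $L_{Y_{\abs{b^+}}}G_\ell\perp L_{Y_{\Arg(b^+)}}G_\ell$ in $L^2(C_0,G_\ell)$. Write $\psi=\theta-\Arg b^+$. Since $M_{\abs{b^+},0}$ commutes with complex conjugation, $f_{b^+}(\theta)=\Arg b^+ +f_{\abs{b^+}}(\psi)$ with $f_{\abs{b^+}}$ odd in $\psi$; hence $\partial_{\abs{b^+}}f_{b^+}$ is odd in $\psi$, while $\partial_{\Arg(b^+)}f_{b^+}=1-f_{b^+}'$ is even in $\psi$, and on the support $\{s\geq\tfrac34\}$ (where $\lambda_2\equiv1$) one has $H(s,\theta)=\Arg b^+ +\phi^+ +\omega_s(\psi)$ with $\omega_s(\psi)=\psi+\lambda_1(s)\big(f_{\abs{b^+}}(\psi)-\psi\big)$ odd in $\psi$. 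Setting $\sigma=\theta'-\Arg b^+-\phi^+$ and using these parities together with $v_{\abs{b^+}}(s,\cdot)=\lambda_1(s)\,\partial_{\abs{b^+}}f_{b^+}\big(\Arg b^+ +\omega_s^{-1}(\sigma)\big)$ and the analogous formula for $v_{\Arg(b^+)}$, one checks that for each fixed $s$ the function $v_{\abs{b^+}}$ is odd and $v_{\Arg(b^+)}$ even in $\sigma$, so that $\partial_s v_{\abs{b^+}}$ and $\partial_\theta v_{\abs{b^+}}$ are odd, resp. even, in $\sigma$ and conversely for $v_{\Arg(b^+)}$. Consequently $\partial_s v_{\abs{b^+}}\,\partial_s v_{\Arg(b^+)}$ and $\partial_\theta v_{\abs{b^+}}\,\partial_\theta v_{\Arg(b^+)}$ are odd in $\sigma$; since the weights $A_\ell,B_\ell$ depend only on $s$, the $\theta$-integral of the pointwise scalar product vanishes for every $s$, hence so does the full scalar product. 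The remaining pair $\phi^+$ versus $\Arg(b^+)$ is again handled by disjoint supports, and the ``$-$'' parameters by the reflection $s\mapsto-s$.
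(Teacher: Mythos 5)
Your proof is correct and follows essentially the same route as the paper's appendix: disjoint supports give the $\phi^+$ versus $b^+$ orthogonality, the odd/even parity in $\theta-\Arg b^+$ (inherited from $M_{\abs{b^+},0}$ commuting with conjugation) gives the $\abs{b^+}$ versus $\Arg(b^+)$ orthogonality, and the lower bound on $\norm{L_{Y_{\abs{b^+}}}G}$ comes from restricting to $\{x\geq 7/8\}$ and an explicit Möbius-transform computation localized where $\abs{e^{iu}+b^+}\sim 1-\abs{b^+}$. The only place the paper is more explicit is in that last estimate, where it carefully takes $\abs{\theta-\pi}\in[2(1-\abs{b^+}),3(1-\abs{b^+})]$ to avoid the sign change of $\partial_a\partial_\theta f_a$ near the peak, but your ``elementary estimate'' is indeed correct.
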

The claims made above for variations with respect to 
$\phi^+$ and $b^+$ are of course valid also for variations with respect to $b^-$ and $\phi^-$ and from the construction it is evident that variations with respect to 
$(\phi^+,b^+)$ on the one hand and 
$(\phi^-,b^-)$ on the other hand have disjoint support so result in Lie-derivatives that are trivially orthogonal.

With regards to the proof of this lemma, 
we observe that the 
orthogonality of $ L_{Y_{\phi^+}}G$ to the variations with respect to $b^+$ follows since $Y_{\phi^+}$
is given by the  Killing field
$\tfrac{\partial}{\partial\theta }$ on the support of $Y_{\abs{b^+}}$ and $Y_{Arg(b^+)}$. 
 
The orthogonality of $L_{Y_{\abs{b^+}}}G $ and $ L_{Y_{Arg(b^+)}}G $ on the other hand will follows from the different  symmetry properties of these two tensors

The proof of this last part and of the estimates claimed in the lemma is not difficult though a bit technical so we include it in the appendix \ref{appendix:diffeos}.

As a consequence of Lemma \ref{lemma:variations-parameter}
we can now prove  Lemma \ref{lemma:diffeo-properties-1} for this particular choice of diffeomorphism

\begin{proof}[Proof of Lemma \ref{lemma:diffeo-properties-1}]
Let $g(\cdot)=h_{b(\cdot),\phi(\cdot)}^*G_{\ell(\cdot)}$ be an admissible curve of metrics
with $ L_{L^2}g(\cdot)=\int_0^T\norm{\pt g}_{L^2(C_0,g)} dt<\infty.$

We first  recall that $Re(\Hol(g))$ is orthogonal to $\{L_Xg\}$ so that 
both $\norm{\pt G}_{L^2(C_0,G)}\leq \norm{\pt g(\cdot)}_{L^2(C_0,g)}$ and $\norm{\ddeps h_{b(\cdot+\eps),\phi(\cdot+\eps)}G(\cdot)}_{L^2(C_0,g)}\leq \norm{\pt g(\cdot)}_{L^2(C_0,g)}$ 
must have finite integral over $[0,T)$.

On the one hand, this implies that 
$\ell(t)$ is bounded from above by a constant $\bar L$ depending only on the initial metric and $L_{L^2}g(\cdot)$, compare \eqref{est:evol-l-large}.

Using the orthogonality of $L_{Y_{\phi^+}}G$ to the variations generated by a change of any of the other parameters, 
as well as estimate \eqref{est:variations-parameters}, we know furthermore that 
$$\norm{\pt g}_{L^2(C_0,g)}\geq \abs{\ddt \phi^+}\cdot \norm{\tfrac{d}{d\phi^+} h_{b,\phi}^* G}_{L^2(C_0, h_{b,\phi}^* G)}
=\abs{\ddt \phi^+}\cdot \norm{L_{Y_{\phi^+}}G}_{L^2(C_0,G)}\geq C_3\cdot \abs{\ddt\phi^+}\,$$
where $C_3$ depends only on the upper bound on $\ell$ obtained above. 
This implies that $\phi^+$, and by the same argument also $\phi^-$, remains bounded.

So consider instead the behaviour of $b^{\pm}$, say of $b^+$.
The orthogonality relations of Lemma \ref{lemma:variations-parameter}
combined with \eqref{est:variations-parameters} imply

\beqa 
\norm{\pt g}_{L^2(C_0,g)}^2&\geq 
\norm{\ddt{ \abs{b^+}}\cdot L_{Y_{\abs{b^+}}}G}_{L^2(C_0,G)}^2 
\geq \big[\tfrac{C_1}{1-\abs{b^+}}-C_2\big]^2 
\abs{\ddt{ \abs{b^+}}}^2.
\eeqa

In particular, for $\abs{b^+}$ sufficiently close to $1$, an estimate of the form
$$\abs{\ddt \log(1-\abs{b^+})}\leq C \norm {\pt g}_{L^2(M,g)}$$ 
holds true which prevents $b^+$ from reaching 
$\partial D_1(0)$ if the curve $g$ has finite $L^2$ length.  
\end{proof}


We remark that the Teichm\"uller space of the cylinder equipped with the metric that results from representing conformal structures by hyperbolic metrics $f^*G_\ell^\eta$ as described in 
Lemmas \ref{lemma:conf}, \ref{lemma:metric-cyl-1} and \ref{lemma:splitting} is not complete. Indeed, as explained in appendix \ref{appendix:collar}, 
for general curves in $\Mneu$ and $\ell$ small we can only bound
$$\abs{\frac{d\ell}{dt} }\leq C\cdot \norm{\partial_t g}_{L^2}\cdot \ell^{1/2}$$
so that the possibility that $\ell\to 0$ is not excluded for curves of finite length.

Nonetheless, for Teichm\"uller harmonic map flow a degeneration of the metric in finite time is excluded since we can prove

\begin{lemma}\label{lemma:inj}
To any numbers $\ell_1>0$ and $M,T,E_0<\infty$ there exist constant $C<\infty$ and $\eps_0>0$ such that the following holds true. 
Let $(u_0,g_0)\in \Hgamma\times \Mneu$ be any initial data so that 
$E(u_0,g_0)\leq E_0$, $\norm{u_0}_{L^\infty}\leq M$
and $\inj(C_0,g_0)\geq 2\ell_1$ and let $(u,g)$ be the corresponding stationary weak solution of Teichm\"uller
 harmonic map flow whose existence on some interval $[0,T_1)$ is assured by Proposition \ref{prop:convergence-discret}.

Then the weighted energy is bounded by 
\newcommand{\I}{{\mathcal{I}}}
\beq \label{def:I}
I(t):=\int_{C_0} e(u(t),g(t))\rho^{-2}(t) dv_{g(t)} \leq C,\eeq 
and the injectivity radius by 
$$\inj(C_0,g)\geq  \eps_0$$
for every $t\in [0,\min(T,T_1))$.
Here $e(u,g)=\halb \abs{du}_{g}^2$ is the energy density while 
$\rho(t)(x,\theta)=\rho_{\ell(t)}(s_{\ell(t)}(x))$ is the conformal factor of the hyperbolic collar.
\end{lemma}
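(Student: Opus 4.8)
The plan is to control the evolution of the weighted energy $I(t)$ and then use the resulting bound to prevent $\ell(t)\to 0$ in finite time. First I would compute $\ddt I(t)$ using the two flow equations. Differentiating $I(t)=\int_{C_0} e(u,g)\rho^{-2}\,dv_g$ produces three groups of terms: one coming from $\pt u$ (which satisfies the differential inequality $\pt u=\Delta_g u$ in the interior, together with the favourable boundary inequality), one from $\pt g$ (which equals $\tfrac14 P^\Var_g(\Re(\Phi(u,g)))$), and one from the explicit $t$-dependence of the weight $\rho^{-2}$, which in turn is driven by $\tfrac{d\ell}{dt}$. The $\pt u$ term, after integration by parts, gives a good negative contribution $-\int\abs{\pt u}^2\rho^{-2}$ plus a boundary term that the Plateau inequality \eqref{eq:evol-u} (tested against a suitable multiple of $u$-type variations) handles with the correct sign, plus a term $\int \la \na e, \ldots\ra$ involving $\na\rho^{-2}$ that must be absorbed. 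The crucial structural point, exactly as in the closed-surface case of \cite{Rexistence}, is that the Hopf-differential term interacts with the weight through the identity relating $\Re(\Phi)$ to $u^*g_{\R^n}$ and $\abs{du}^2 g$; using that $\rho^{-2}g=G_\ell$-type flat background and that $\Delta_g(\log\rho)$ is constant (hyperbolicity, $K_g\equiv -1$), the bad terms combine into a quantity controlled by $\norm{\pt g}_{L^2}^2$ plus $C\cdot I(t)$.

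The key estimate I would aim for is a differential inequality of Gronwall type,
\beq
\ddt I(t)\le C\,I(t)+C\norm{\pt g(t)}_{L^2(C_0,g)}^2,
\eeq
valid for a.e.\ $t$, where $C$ depends only on $\eta$, $E_0$, the geometric data, and the a priori upper bound $\ell\le\bar L(E_0)$ from \eqref{est:apriori-upper-ell}. Since the energy inequality \eqref{eq:energy-cond} gives $\int_0^T\norm{\pt g}_{L^2}^2\,dt\le E_0$, integrating this inequality yields $I(t)\le (I(0)+CE_0)e^{Ct}\le C(T,E_0,\ell_1)$ on $[0,\min(T,T_1))$; here $I(0)$ is finite because $\inj(C_0,g_0)\ge 2\ell_1$ bounds $\rho^{-2}$ from above in terms of $\ell_1$ and $E(u_0,g_0)\le E_0$. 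This is the bound \eqref{def:I}.

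The second half is to deduce the lower injectivity-radius bound. Here I would use the collar structure: on the thin part of $(C_0,g)$ the conformal factor $\rho$ is comparable to the standard collar profile, and $\rho^{-2}$ blows up precisely where $\ell$ is small, quantitatively $\rho^{-2}\gtrsim \ell^{-2}$ on a definite portion of the collar (in the fixed coordinates, a cylinder $[-\delta,\delta]\times S^1$ whose $g$-width is $O(1)$). The finiteness of $I(t)$ therefore forces a lower bound on how little energy can sit in the collar, and conversely—this is the place where one combines the weighted bound with the control on $\tfrac{d\ell}{dt}$. Recall from the discussion before the lemma that $\abs{\tfrac{d\ell}{dt}}\le C\norm{\pt g}_{L^2}\ell^{1/2}$, so $\abs{\tfrac{d}{dt}\ell^{1/2}}\le C\norm{\pt g}_{L^2}$; this alone gives $\ell^{1/2}(t)\ge \ell_1^{1/2}-C\int_0^t\norm{\pt g}_{L^2}\ge \ell_1^{1/2}-C(E_0 T)^{1/2}$, which is positive for short time but may degenerate. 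To get the clean bound $\inj\ge\eps_0$ on all of $[0,\min(T,T_1))$ one upgrades this using $I(t)\le C$: if $\ell(t)$ were small, the weight $\rho^{-2}\sim\ell^{-2}$ combined with the stationarity condition \eqref{eq:stationar} (which, restricted to the collar, forces the horizontal part of $\Re(\Phi)$—i.e.\ the component along $Re(\Hol)$—to vanish and hence forces a definite amount of energy in the collar) would make $I(t)$ large, a contradiction. Making the interaction between the weighted-energy bound, the collar geometry, and the stationarity identity fully quantitative—so that "small $\ell$'' genuinely implies "large $I$''—is what I expect to be the main obstacle; the Gronwall step itself is routine once the good structure of $\ddt I$ is identified. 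I would also need to double-check that the $h_{b,\phi}$-modification does not interfere: since $h_{b,\phi}$ is supported away from the central geodesic $\{0\}\times S^1$ (a property explicitly required of the diffeomorphisms), the whole collar analysis can be carried out for the model metrics $G_\ell$ and is unaffected by the $(b,\phi)$-pullback.
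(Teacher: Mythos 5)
Your Gronwall strategy for the weighted energy bound is the right shape, but you seriously underestimate the difficulty of the computation of $\ddt I(t)$. Differentiating $I$ produces terms from $\pt u$, $\pt g$, and the time dependence of $\rho^{-2}$; showing that the bad terms close up into $CI+C\norm{\pt g}^2_{L^2}$ (or $C(1+\norm{\pt u}^2_{L^2})(1+I)$, the form used in the paper) is not routine and requires the angular energy estimates of Proposition 3.6 of \cite{RT2}, the Bochner formula for the energy density (Lemma 5.2 of \cite{RT2}), and the evolution equation for the conformal factor (Lemma 5.4 of \cite{RT2}). The paper makes no attempt to redo this; it reduces to the closed-surface results by the observation you do correctly make, namely that the $h_{b,\phi}$ are supported away from the central geodesic, so the collar analysis carries over unchanged. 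You would need to either cite these intermediate lemmas or reprove them — "the bad terms combine" is a gap, not an argument.

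The second half of your proposal is actually wrong. You want to argue that if $\ell(t)$ is small then the stationarity condition forces a definite amount of energy into the collar, where $\rho^{-2}\sim\ell^{-2}$ is large, so that $I(t)$ would blow up — a contradiction. But the implication "small $\ell$ implies large $I$" is false: in the degenerate asymptotic regime (Theorem \ref{thm2}, case II) $\ell(t_i)\to 0$ along a sequence of times while $I$ remains bounded, precisely because the energy concentrates away from the thin part of the collar as the surface splits into two discs. The stationarity condition \eqref{eq:stationar} does not force energy into the thin region. The correct and much shorter argument is direct, not by contradiction: from \eqref{eq:ptG}, \eqref{eq:ddl}, \eqref{eq:dz-small-l}, and \eqref{eq:Linfty-dz} one obtains
\begin{equation*}
\Big|\ddt\log\ell + \tfrac{1}{16\pi^3}\,\ell\int_{\Col(\ell)}(\abs{u_s}^2-\abs{u_\theta}^2)\rho^{-2}\,ds\,d\theta\Big|\le C\,\ell\,\norm{\Phi(u,g)}_{L^1},
\end{equation*}
so that $\abs{\ddt\log\ell}\le C\ell + \ell I(t)$. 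Once $I(t)\le C$ is established, the rate of change of $\log\ell$ is bounded on $[0,\min(T,T_1))$, which immediately gives $\ell(t)\ge \ell_1 e^{-C'T}=:\eps_0$. Your preliminary computation $\abs{\tfrac{d}{dt}\ell^{1/2}}\le C\norm{\pt g}_{L^2}$ only yields a lower bound for small times; the lemma is claimed for every finite $T$, and without the logarithmic improvement coming from the weighted-energy bound the argument degenerates once $C(E_0T)^{1/2}\ge\ell_1^{1/2}$.
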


This result is essentially a consequence of results proven in \cite{RT2} for Teichm\"uller harmonic map flow from closed surfaces into non-positively curved targets because the action of the diffeomorphisms $h_{b,\phi}$ does not affect the region near the central geodesic. 
We also recall that while the 
metric component $g$ is in general not smooth, it is Lipschitz continuous in time with respect to any metric in space. So while 
$u$ might not be smooth in the interior of $C_0$ it also satisfies such $C^{0,1}_tC^k_x$ bounds, at least away from time $t=0$ which is enough to apply the arguments of \cite{RT2} 
on almost every time-slice.

\begin{proof}
 We first explain why a bound on the weighted energy $I$ results in a bound on the injectivity radius. 
 We recall that the evolution of $g(t)=h_{(b,\phi)(t)}^*G(t)$, $G(t)=G_{\ell(t)}$,  
splits $L^2$-orthogonally 
into the projection of the Hopf-differential onto the subspace $\{L_{h_{b,\phi}^*X}g, \, X\in \chi(b,\phi)\}$ and into the projection onto $Re(\Hol(g))$ and thus that 
$\pt G=Re(P^{\Hol(G)}(\Phi))$.
For the cylinder the space $\Hol(G)$ consists only of tensors that can be written as 
$$a_0\cdot dz^2 , a_0\in\R$$
with respect to collar coordinates $z=s+i\theta, (s,\theta)\in[-Y(\ell),Y(\ell)]\times S^1$, so 

\beq\label{eq:ptG} \pt G=Re\bigg( \langle \Phi, \frac{dz^2}{\norm{dz^2}_{L^2}^2}\rangle_{L^2} dz^2\bigg).\eeq

We furthermore recall that if $\partial_tg =a_0dz^2$, then the length of the central geodesic evolves according to $\frac{d\ell}{dt}=-\frac{2\pi^2}{\ell} a_0$, 
compare \eqref{eq:ddl}.

For small values of $\ell$, say $\ell\in (0,\ell_0)$, the norm  
$\norm{dz^2}_{L^2}^2$ is given by \eqref{eq:dz-small-l}
which, once combined with \eqref{eq:ptG} and \eqref{eq:ddl} 
and \eqref{eq:Linfty-dz}, implies that  
\beqs
\abs{\frac{d}{dt}\log\ell +\frac{1}{16\pi^3}\cdot \ell \int_{\Col(\ell)} (\abs{u_s}^2-\abs{u_\theta}^2)\rho^{-2} dsd\theta}\leq C\ell\norm{\Phi(u,g)}_{L^1}, \eeqs
$\Col(\ell)=[-Y(\ell),Y(\ell)]\times S^1$, compare also section 5 of \cite{RT2}.

In particular $\abs{\ddt \log(\ell)}\leq C\ell+\ell I(t)$ is bounded if $I(t)$ is bounded, resulting in the desired lower bound 
on $\ell=2\inj(C_0,G_\ell)$.

For the proof of \eqref{def:I} we can use results derived in sections 3 and 5 of \cite{RT2}. Namely, 
the results of \cite[section 3]{RT2}, in particular Proposition 3.6, give angular energy estimates for maps from hyperbolic collars
into compact non-positively curved targets. Since we know that $\norm{u}_{L^\infty}\leq M$ these results apply without change also to the present situation.

As in section 5 of \cite{RT2} we consider a cut-off version of the weighted energy given by 

\newcommand{\I}{{\mathcal{I}}}
\beq \label{I-zeppelin} 
\I(t):=\int_{\Col(\ell(t))} e(u(t),g(t))\rho^{-2}(t)\vph(\rho(t))dv_{g(t)},\eeq
where $\vph\in C_0^\infty([0,2\de),[0,1])$ is a cut-off function with $\vph\equiv 1$ on $[0,\de]$, and where $\delta>0$ can be chosen to be any fixed number.

We remark that we can choose $\delta$ sufficiently small, so that 
the diffeomorphism
$h_{b,\phi}$ agree with the identity on the support of $\vph\circ \rho$, compare \eqref{eq:Xde} and the subsequent comments.

For such a choice of $\delta$ we conclude that the evolution of the metric reduces to 
$\pt g=Re(c(t)dz^2)$ on the relevant region, i.e. on the support of $\varphi\circ \rho$. 

Consequently the Bochner formula for the energy density given in Lemma 5.2 of \cite{RT2} and the evolution equation for the conformal factor described in Lemma 5.4 of \cite{RT2}
apply without change and could indeed be further simplified 
as $\pt g$ evolves not just by any holomorphic quadratic differential but by $c_0dz^2$.

Then arguing precisely as in the proof of Lemma 5.1 in \cite{RT2} we obtain that 
 \beq \label{est:I-zeppelin}
 \bigg|\ddt \log(1+\I)\bigg| \leq C\left(1+\norm{\Delta_g u}_{L^2(C_0,g)}^2\right)\leq  C\left(1+\norm{\partial_t u}_{L^2(C_0,g)}^2\right)\eeq
with $C$ depending only on $M$, the initial energy and the choice of $\delta$.
Thus $\I$ and consequently also $I\leq \I+C_\delta E_0$ is bounded uniformly on every compact time interval as claimed in the lemma. 
\end{proof}

From Lemma \ref{lemma:inj} we thus conclude that for arbitrary initial data $(u_0,g_0)\in H^1_\Gamma\times \M$ solutions to Teichm\"uller harmonic map flow from the cylinder indeed extist for all times as claimed in Theorem
\ref{thm1}.

\section{Asymptotics of global solutions}\label{sect:asympt}
We now turn to the proof of the second main result of the paper, the asymptotic convergence for the global weak solutions whose existence we have just proven. 
In the present work we analyse the asymptotics in case that the three-point-condition does not 
degenerate as $t\to \infty$, i.e. for solutions for which the parameters $b^\pm$ remain bounded away from $\partial D_1$ (at least for a subsequence $t_j\to \infty$). The
remaining 
case of the asymptotics will be analysed in future work. 

So let $(u,g)$ be a global stationary weak solution of Teichm\"uller harmonic map flow which satisfies the energy inequality. 
We then choose  $t_i\to \infty$ such that the stationarity condition is satisfied for the times $t_i$ and so that 
\beq 
\label{eq:asympt-subseq-1}
\norm{\Delta_{g(t_i)} u(t_i)}_{L^2(C_0,g(t_i))}\to 0 ,
\eeq
\beq \label{eq:asympt-subseq-2}
\norm{P_g^{\Var}(Re(\Phi(u,g)(t_i))}_{L^2(C_0,g(t_i))} \to 0 
\eeq
and $\abs{b^{\pm}(t_i)}\nto 1$ 
as $i\to \infty$.

We can thus pass to a subsequence to achieve that
 \beq \label{est:conv-parameters}
 b_i^\pm=b^\pm(t_i)\to b_\infty^\pm\in D_1(0)\subset \C \text{ and }
 \tilde \phi_i^\pm:= \phi_i^\pm-n_i^\pm \cdot 2\pi\to \phi_\infty^\pm,\eeq
as $i\to\infty$ where $n_i=\lfloor\tfrac{\phi(t_i)^\pm}{2\pi}\rfloor$.

 We then pull-back the map and metric by the diffeomorphisms  
 $f_i:=h_{0,2\pi n_i}$. Remark that since $f_i= h_{b_i,\phi_i}^{-1}\circ h_{b_i,\tilde \phi_i}$ the resulting metrics 
 $g_i:=f_i^*g(t_i)$ are simply given by 
 $g_i=h_{b_i,\tilde \phi_i}^*G_{\ell(t_i)}$.
 
 We furthermore recall that $f_i$ agrees with the identity in a neighbourhood of the boundary so that the pulled-back maps $u_i=
 u(t_i)\circ f_i$ still satisfy the three-point-condition, i.e. are again elements of $H^1_{\Gamma,*}(C_0)$.
 
Remark that \eqref{eq:asympt-subseq-1} and \eqref{eq:asympt-subseq-2} 
are satisfied also for $(u_i,g_i)$ and both the differential inequality
 \beq \label{eq:diff-ineq-for-asympt}
 \int \langle du_i, dw\rangle\, dv_{g_i}+\int \Delta_{g_i}u_i\cdot w \, dv_{g_i}\geq 0 \text{ for all } w\in T^+_{u_i}H^1_{\Gamma,*}\eeq
 and the stationarity equation
 \beq \label{eq:stationarity-for-asympt}
 \int Re(\Phi(u_i,g_i))\cdot L_Xg_i +\Delta_{g_i}u_i\cdot d u_i(X) \,dv_g=0 \text{ for all } X\in \Gamma_*(TC_0)\eeq
 hold true. 
 
To prove convergence of $(u_i,g_i)$ to a critical point of area as described in Theorem \ref{thm2} we now distinguish between the 
non-degenerate case, $\ell(t_i)\nto 0$, in which we will obtain a (branched) minimal immersion parametrised over a cylinder, and the degenerate case $\ell(t_i)\to 0$ in which the surface splits into 
two minimal discs. 

We begin with 

\begin{proof}[Proof of Theorem \ref{thm2} part (i): The non-degenerate case]
 After possibly passing to a further subsequence we can assume that 
 $\ell_i=\ell(t_i)\to \ell_\infty>0$ which implies that the metrics converge $g_i\to g_\infty=h_{b_\infty,\phi_\infty}^*
 G_{\ell_\infty}$ smoothly on $C_0$. 
 
Furthermore, as $u_i$ is a solution of \eqref{eq:diff-ineq-for-asympt} for which $\norm{\Delta_{g_i} u_i}_{L^2}$ is bounded, 
we can 
apply the $H^2$-estimates of Lemma \ref{lemma:H^2-general-boundary} away from $P_j^\pm$ as well as the $H^1$ estimates of 
Lemma \ref{lemma:key-est} and Corollary \ref{cor:H^1-strong-estimates} on the whole of $C_0$. We conclude that 
a subsequence of the $u_i$ converges to a limit $u_\infty\in H^2_{loc}(C^*)\cap H^1(C_0)$ where the obtained convergence is 
weak $H^2\loc$ and strong $W^{1,p}\loc$ convergence on $C^*:=C_0\setminus \bigcup P_j^\pm$ as well as strong $H^1$ convergence on all of $C_0$. Furthermore, Corollary \ref{lemma:cont} implies that the maps $u_i$ are equicontinuous 
near the boundary, and thus by the $H^2$ estimates on all of $C_0$, so that the $u_i$ converge uniformly on $C_0$. In particular, $u_\infty\in C^0(C_0)$.

The above convergence implies not only that 
\beq \label{eq:limit-harmonic}
\Delta_{g_\infty} u_\infty\equiv 0 \text{ on } C_0
\eeq 
and consequently that the Hopf-differential of the limit is holomorphic, but furthermore that 
the Hopf-differentials $\Phi(u_i,g_i)\to \Phi(u_\infty,g_\infty)$ converge in $L^1$ on the whole cylinder $C_0$.

From Lemma \ref{lemma:Lip-Projection} we thus obtain that  
\beq \label{eq:proj-Hopf-diff-limit}
P_{g_\infty}^\Var(\Phi(u_\infty,g_\infty))=\lim_{i\to  \infty } P_{g_i}^\Var(\Phi(u_i,g_i))=0.\eeq

On the one hand, this implies that 
\beq \label{eq:Phi-infty}
\int Re(\Phi(u_\infty,g_\infty))\cdot L_Y g_\infty dv_{g_\infty} = 0\eeq
holds true for the vectorfields 
$Y\in h_{b_\infty,\tilde\phi_\infty}^*\chi(b_\infty, \tilde \phi_\infty)$ generating  the diffeomorphisms $h_{b,\phi}$.

On the other hand, the convergence of the Hopf-differential allows us to pass to the limit in the stationarity condition to conclude that 
\eqref{eq:Phi-infty}
 holds true also for all vectorfields $X\in \Gamma(TC_0)_*$. Thus, by Lemma \ref{lemma:diffeo-properties-2}, we find that 
 \eqref{eq:Phi-infty} is indeed true for any smooth vectorfield on $C_0$ which is tangential to $\partial C_0$ on $\partial C_0$.

We now show that this forces $\Phi_\infty=\Phi(u_\infty, g_\infty)$ to be of the form $cdz^2$ for some $c\in\R$.

Remark that if $\Phi_\infty$ were smooth (or even just $W^{1,1}$) upto the boundary, 
we could directly combine \eqref{eq:Phi-infty} with Stokes theorem to conclude that
$\Phi_\infty$ is real on the boundary and then to conclude that $\Phi_\infty=cdz^2, c\in \R$.

However, while $\Phi_\infty$ is holomorphic 
and thus smooth in the interior as well as in $W^{1,p}$, $p<2$ in a neighbourhood of general boundary points, near the points $P_j^\pm$ we know a priori only that $\Phi_\infty$ is in $L^1$.
Thus $\Phi_\infty$ could have a pole at such a point
and we need to proceed with more care.

Given any fixed $X\in \Gamma(TC_0)$ 
we use that \eqref{eq:Phi-infty}
implies that 
\beq \label{est:int-X-infty}
\abs{\int_{[-1+\eps,1-\eps]\times S^1} L_Xg_\infty \cdot Re(\Phi_\infty) \,dv_{g_\infty}}\to 0 \text{ as }\eps\to 0\eeq
and we initially work on such subcylinders where $\Phi_\infty$ is smooth.

Recall that $L_Xg$ can be identified with $-\delta_g^*X$, where $\delta_g^*$ is the $L^2$-adjoint of the divergence operator and that the real part
of a holomorphic quadratic differential is divergence free.

So, switching to collar coordinates $(s,\theta)\in [-Y_\infty,Y_\infty]\times S^1$, $Y_\infty=Y(\ell_\infty)$,  
and applying  Stokes theorem to
\eqref{est:int-X-infty} yields
\beq \label{est:real-on-boundary-eps}
\abs{\int_{\{Y_\infty-\tilde\eps\}\times  
 S^1} Re(\Phi_\infty)(\tfrac{\partial}{\partial s},X) \rho^{-2}d\theta
-\int_{\{-Y_\infty+\tilde\eps\}\times S^1} Re(\Phi_\infty)(\tfrac{\partial}{\partial s},X) \rho^{-2}d\theta}\to 0 \text{ as } \tilde \eps\to 0\eeq
where $\rho=\rho_{\ell_\infty}(s)$.

Away from the boundary of $ [-Y,Y]\times S^1$ 
we now represent $\Phi$ by its Fourier expansion $\Phi_\infty=\sum_{n\in\Z} (a_n+ib_n) e^{ns}e^{ni\theta}$, $a_n,b_n\in \R$
and apply \eqref{est:real-on-boundary-eps} for vectorfields of the form 
$X=\lambda^{\pm}(s) \cos(m\theta)\cdot\frac{\partial}{\partial \theta}$ and $X=\lambda^{\pm}(s) \sin(m\theta)\cdot\frac{\partial}{\partial \theta}$, $m\in\N$, 
where 
$\lambda^\pm$ are cut-off functions that are identically one
 in a neighbourhood of $\pm 1$ and that vanish say on
  $\{ \pm s\leq \halb\}$.

Passing to the limit $\tilde\eps\to 0$ in \eqref{est:real-on-boundary-eps}
yields  
$$b_me^{m Y_\infty}=b_{-m}e^{- mY_\infty} \text{ and } 
b_me^{-m Y_\infty}=b_{m}e^{mY_\infty} $$
as well as 
$$a_me^{ m Y_\infty}=-a_{-m}e^{-m\cdot Y_\infty} \text{ and }
 a_me^{ -m Y_\infty}=-a_{-m}e^{m\cdot Y_\infty}. $$
so that all Fourier coefficients except for $c_0=a_0+ib_0$ need to be zero. Of course, testing with $X=\lambda^\pm\cdot \frac{\partial}{\partial \theta}$ furthermore gives that $b_0=0$ and thus 
that $ \Phi_\infty=a_0dz^2$ is indeed an element of $\Hol(C_0)$.

But \eqref{eq:proj-Hopf-diff-limit} also implies that 
the projection of $\Phi_\infty$ 
onto $\Hol(g_\infty)=\{cdz^2, c\in\R\}$ 
vanishes so $\Phi_\infty$ must vanish meaning that $u_\infty$ must be (weakly) conformal. 
Thus $u_\infty$ is a weakly conformal and harmonic map which spans $\Gamma$ and can thus in particular not be constant so 
must be a (possibly branched) minimal immersion \cite{GOR}. 
\end{proof}

\begin{proof}[Proof of Theorem \ref{thm2} part (ii): The degenerate case:]

Let $(u_i,g_i)$ be as above and assume now that $\ell_i\to 0$.
We let  $C^+=(0,1]\times S^1$ and $C^-=[-1,0)\times S^1$
and observe that the subcylinders $(C^\pm, g_i)$ are isometric to 
  $$\big([0,Y_i)\times S^1, \rho_\ell^2(Y_i-s)\cdot (ds^2+d\theta^2)\big),\quad  Y_i=Y(\ell_i)
  $$
  with an isometry given by
  $\tilde f_{\ell_i}^\pm:(x,\theta)\mapsto 
  (Y_i\mp s_{\ell_i}(x),\theta).$
We remark that 
  $\rho_\ell(Y(\ell)-s)\to \frac{1}{2\pi s+ \eta}$  as 
  $ \ell\to 0$ locally smoothly on $[0,\infty)\times S^1.$ 
  At the same time 
  $\tilde f_\ell^ \pm$
  converges locally to the diffeomorphism 
    $\tilde f_\infty^\pm:C^\pm\to [0,\infty)\times S^1$ given by
   $f_\infty^\pm(x,\theta)=
   (\frac{2\pi}{\ell_0}\cdot \tan\big(\frac\pi2\mp \frac{\ell_0 x}{2\pi}\big)-2\pi\eta,\theta)$.
  
 Thus the metrics $g_i$ converge smoothly locally to a metric $g_\infty$ that is isometric to 
 the hyperbolic cusp 
 $$([0,\infty)\times S^1, \rho_0^2(s)\cdot (ds^2+d\theta^2))$$
 described in the theorem.
  
 At the same time, we get subconvergence for the maps $u_i=u(t_i)\circ h_{0,2\pi n_i}\to u_\infty$
 as described in the theorem since the bounds on $\abs{b_i^\pm}$ allow us to apply the $H^2$-estimates of Lemma \ref{lemma:interior-minimiser} and Lemma \ref{lemma:H^2-general-boundary} as well as the $H^1$ estimate of Lemma \ref{lemma:key-est} and the equicontinuity result of Corollary \ref{lemma:cont} on every compact subset of $C^\pm$, 
see also Remarks \ref{rem:no-lower-e-bound} and \ref{rem:no-lower-e-bound-2}.

 The above convergence of the maps and metrics implies in particular that 
 $\Delta u_\infty=0$ and thus that 
 $\Phi_\infty=\Phi(u_\infty, g_\infty)$ is holomorphic on $C^\pm$. 
 We then observe that 
the local convergence of the map and metric on $C^\pm$ 
allows us to pass to the limit in the stationarity condition to conclude that 
\beq \label{eq:Phi-infty-deg}
\int_{C^\pm}L_Xg_\infty \cdot Re(\Phi_\infty)dv_{g_\infty}=0
\eeq
provided we only consider vectorfields $X\in \Gamma(TC_0)_*$ whose support is contained in one of the subcylinders $C^\pm$.

We recall that 
also the support of the 
 vectorfields $Y_{\phi^\pm}$, $Y_{Re(b^\pm)}$ and $Y_{Im(b^\pm)}$ 
 generating the diffeomorphisms $h_{b,\phi}$ is contained in 
 $C^\pm$ and that the corresponding projection of $\Phi$ tends to zero, compare \eqref{eq:asympt-subseq-2}. So local strong convergence of $u_i$ in $H^1$ and consequently of $\Phi_i$ in $L^1$ implies that 
  \eqref{eq:Phi-infty-deg} holds true also for these particular vectorfields, and thus, 
  by Lemma \ref{lemma:diffeo-properties-2}, indeed for 
 arbitrary vectorfields $X\in \Gamma(TC_0)$ whose support
  is contained in either $C^+$ or $C^-$.

As the Fourier expansion of $\Phi(u_\infty,g_\infty)$ on $(C^\pm,g_\infty)\simeq [0,\infty)\times S^1$ cannot have any exponentially growing terms (since $\norm{\Phi_\infty}_{L^1}<CE_0<\infty$)
 we can then argue as in the previous proof to conclude that in collar coordinates 
  $(s,\theta)\in [0,\infty)\times S^1$
  $$\Phi_\infty=c^\pm (ds+id\theta)^2 \quad \text{ for some } c^\pm\in \R.$$
  
  Pulling $\Phi_\infty$ back to the punctured disc $D^*=D_1(0)\setminus\{0\}$
  through a conformal diffeomorphism 
  $f:(D^*,g_{eucl})\to ([0,\infty)\times S^1, ds^2+d\theta^2)$
  we thus find that  the Hopf-differential of $u_\infty$ is represented by 
  $c^\pm z^{-2} dz^2$, $c^\pm\in \R$ for $z\in D^*$. 
  
 But the limiting map $u_\infty$ can be seen as a harmonic map from the punctured disc $(D^*,g_{eucl})$    
 whose energy is finite (since the energy is conformally invariant) which implies that $u_\infty$ can   be continued smoothly across the puncture, compare \cite{SU}. Thus 
 $\Phi_\infty$ must be smooth on all of $D_1(0)$ 
 and must thus vanish identically. 
 
 This proves that the maps $u_\infty^\pm=u_\infty\vert_{C^\pm}
 $ extend to
weakly conformal harmonic maps from the disc and thus give two (possibly branched) minimal immersions with  
each of them spanning one of the boundary curves $\Gamma^\pm$.
 
\end{proof}

\appendix
\section{Appendix}
\subsection{Courant-Lebesgue Lemma and properties of $\Hgamma$}\label{sect:Courant}
Throughout the paper we made use of the Courant-Lebesgue Lemma of which we use the following version, see e.g. \cite[Lemma 3.1.1]{Jost} or \cite[Lemma 4.4]{Struwe-book}

\begin{lemma}  \label{lemma:Courant-Lebesgue}
Let $D_r(0)^+=\{x\in\R^2: \abs{x}\leq r, x_1\geq 0\}$ and let 
$u\in H^1(D_r(0)^+,\R^n)$ be any map that has energy $E(u,g_{\text{eucl}})\leq E_0$, $E_0$ any fixed number. 
Then for any $\delta\in (0,\min(r,\halb))$ there exists $\rho\in (\delta,\sqrt{\delta})$ so that 
$u|_{\partial D_\rho(0)^+}$ is absolutely continuous and so that the estimate 
$$\abs{u(x)-u(y)}\leq C\cdot \abs{\log(\delta)}^{-1/2},\quad \text{ for all } x,y \in\partial D_\rho^+:= \{ y: \abs{y}=\rho,\quad y_1\geq 0\}$$
holds true with a constant $C$ that depends only on $E_0$. 
\end{lemma}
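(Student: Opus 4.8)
The plan is to use the standard argument estimating the energy of $u$ on semicircular arcs and applying a mean-value (pigeonhole) argument in the radial variable. First I would write the energy of $u$ on the half-disc $D_{\sqrt\delta}(0)^+\setminus D_\delta(0)^+$ in polar coordinates $(r,\varphi)$, so that
$$E_0\geq \halb\int_\delta^{\sqrt\delta}\int_{\varphi: (r\cos\varphi,r\sin\varphi)\in D_r^+}\big(\abs{\partial_r u}^2+\tfrac1{r^2}\abs{\partial_\varphi u}^2\big) r\, d\varphi\, dr\geq \halb\int_\delta^{\sqrt\delta}\Big(\int \abs{\partial_\varphi u}^2 d\varphi\Big)\frac{dr}{r}.$$
Since $\int_\delta^{\sqrt\delta}\frac{dr}{r}=\halb\abs{\log\delta}$, the mean-value theorem for integrals yields some $\rho\in(\delta,\sqrt\delta)$ with $\int\abs{\partial_\varphi u(\rho,\cdot)}^2 d\varphi\leq \frac{4E_0}{\abs{\log\delta}}$; moreover, by Fubini the restriction $u|_{\partial D_\rho^+}$ is in $H^1$ of the arc, hence absolutely continuous, for almost every $\rho$, so $\rho$ can be chosen to have both properties simultaneously.

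Next I would convert this $H^1$ bound on the arc into a pointwise oscillation bound. For $x,y$ on the semicircular arc $\partial D_\rho^+$, parametrised by arclength $s\mapsto \gamma(s)$ with $s$ ranging over an interval of length $\pi\rho\leq \pi$, the fundamental theorem of calculus and Cauchy-Schwarz give
$$\abs{u(x)-u(y)}\leq \int \abs{\tfrac{d}{ds}u(\gamma(s))}\,ds\leq (\pi\rho)^{1/2}\Big(\int\abs{\tfrac{d}{ds}u(\gamma(s))}^2 ds\Big)^{1/2}\leq (\pi)^{1/2}\Big(\int\abs{\partial_\varphi u(\rho,\cdot)}^2 d\varphi\Big)^{1/2},$$
where in the last step I changed variables from arclength $s$ to angle $\varphi$ (noting $ds=\rho\, d\varphi$, which exactly cancels the factor $\rho$). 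Combining with the bound on the angular integral from the first step gives $\abs{u(x)-u(y)}\leq C\abs{\log\delta}^{-1/2}$ with $C$ depending only on $E_0$, as claimed.

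There is no serious obstacle here: this is the classical Courant-Lebesgue argument, and the only mild point of care is bookkeeping the half-disc geometry (so that the arcs $\partial D_\rho^+$ are genuine arcs in the domain and the angular variable ranges over a set of length at most $\pi$) and ensuring that the good radius $\rho$ may be chosen to simultaneously satisfy the energy bound and the absolute continuity of the trace, which follows since both exceptional sets of radii have measure zero (resp. small measure) relative to $(\delta,\sqrt\delta)$. I would simply cite \cite[Lemma 3.1.1]{Jost} or \cite[Lemma 4.4]{Struwe-book} for the details if a fully self-contained write-up is not desired.
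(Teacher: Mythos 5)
Your argument is the standard Courant--Lebesgue pigeonhole-in-$\log r$ proof, and it is correct (including the Fubini remark on absolute continuity and the bookkeeping of the half-disc angular range of length $\pi$). The paper itself does not prove Lemma \ref{lemma:Courant-Lebesgue} but simply cites \cite[Lemma 3.1.1]{Jost} and \cite[Lemma 4.4]{Struwe-book}, where exactly this argument appears, so there is nothing further to compare.
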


We use in particular the following consequence for maps satisfying the three-point-condition
\begin{cor}\label{lemma:courant-lebesgue-consequence}
Let $u_i\in H^1_{\Gamma,*}(C_0)$ be a sequence of maps that have 
uniformly bounded energy $E(u_i,g_i)\leq E_0<\infty$ with respect to metrics $g_i=h_{b_i,\phi_i}^*G_{\ell_i}$ 
for which $\sup \abs{b_i^\pm}<1$. Then 
the traces $u_i \vert_{\pC}$ are equicontinuous.
\end{cor}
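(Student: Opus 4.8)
The plan is to run the classical Courant--Lebesgue argument (in the form of Lemma~\ref{lemma:Courant-Lebesgue}) together with the three-point-condition, exactly as in the standard equicontinuity proofs for the Plateau problem (see e.g.\ \cite{Jost,Struwe-book}). It is enough to treat one boundary component, say $\pCp$; the argument on $\partial C_-$ is identical. The whole of the argument is routine once two uniformity statements are in place, and these are where the structure of the present setting is used: (a) the parameters $\ell_i$ stay bounded above, and (b) on a fixed collar neighbourhood of $\pCp$ the metrics $g_i$ are, up to a conformal factor (which is irrelevant for the Dirichlet energy in two dimensions), uniformly bi-Lipschitz to a fixed flat metric in the fixed chart, uniformly in $i$, despite $\ell_i$ possibly tending to $0$ and $\phi_i^\pm$ possibly tending to $\infty$.

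For (a) I would invoke the energy lower bound established earlier in the excerpt: any $w\in H^1_\Gamma(C_0)$ satisfies $E(w,h_{b,\phi}^*G_\ell)\geq \tilde c\,\delta_\Gamma^2/Y(\ell)$ with $\delta_\Gamma=\dist(\Gamma_+,\Gamma_-)>0$. Applied to $u_i$ this gives $Y(\ell_i)\geq \tilde c\,\delta_\Gamma^2/E_0$, and since $Y$ is a decreasing bijection of $(0,\infty)$ we obtain $\ell_i\leq \bar L(E_0)<\infty$. For (b) I would note that on $\{x\geq \tfrac78\}$ the diffeomorphism $h_{b_i,\phi_i}$ acts only on the angular variable, $(x,\theta)\mapsto(x,f_{b_i^+,\phi_i^+}(\theta))$; since in the fixed chart $G_{\ell_i}$ is a conformal multiple of a metric whose $dx^2$- and $d\theta^2$-coefficients are bounded above and below on $[\tfrac78,1]\times S^1$ for $\ell_i\leq\bar L$ (the collars $G_\ell$ degenerate as $\ell\to0$ only near the central geodesic, not near $\pC$), and since a change of $\phi$ merely translates $f_{b,\phi}$, so that $f'_{b_i^+,\phi_i^+}$ stays bounded above and below in terms of $b_0$ alone, the pulled-back metric $g_i$ is on this region a conformal multiple of a metric bi-Lipschitz to the flat $dx^2+d\theta^2$ with a constant $\Lambda=\Lambda(b_0,\bar L)$. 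By conformal invariance of the energy in dimension two, the energy of $u_i$ on $\{x\geq\tfrac78\}\times S^1$ with respect to $dx^2+d\theta^2$ is then bounded by $\Lambda E_0$, uniformly in $i$.

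With (a) and (b) in hand I would argue as follows. Fix $\varepsilon>0$. The three points $\alpha_+(\theta_0),\alpha_+(\theta_1),\alpha_+(\theta_2)$ are distinct, so there is $d_\Gamma>0$ such that every subarc of $\Gamma_+$ containing two of them has $\operatorname{diam}\geq d_\Gamma$; and since $\alpha_+$ is a homeomorphism there is $\varepsilon'\in(0,\varepsilon)$ so that any two points of $\Gamma_+$ at distance $<\varepsilon'$ are joined by a subarc of diameter $<\min(\varepsilon,d_\Gamma)$. Let $C_{CL}$ be the Courant--Lebesgue constant associated with the energy bound $\Lambda E_0$ and choose $\delta_0\in(0,\tfrac18)$ with $C_{CL}\lvert\log\delta_0\rvert^{-1/2}<\varepsilon'$. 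Given $x=(1,\theta_x)$, $y=(1,\theta_y)\in\pCp$ with $\lvert\theta_x-\theta_y\rvert=:\delta''<\delta_0$, Lemma~\ref{lemma:Courant-Lebesgue} applied to $u_i$ on the half-disc $D^+_{1/8}(x)$ (with the flat metric $dx^2+d\theta^2$) yields $\rho_i\in(\delta'',\sqrt{\delta''})$ with $\osc_{\partial D^+_{\rho_i}(x)}u_i\leq C_{CL}\lvert\log\delta''\rvert^{-1/2}<\varepsilon'$. The half-circle $\partial D^+_{\rho_i}(x)$ meets $\pCp$ in $a_i=(1,\theta_x-\rho_i)$ and $b_i=(1,\theta_x+\rho_i)$, so $\lvert u_i(a_i)-u_i(b_i)\rvert<\varepsilon'$, and both $x$ and $y$ lie on the boundary arc $I_i:=\{1\}\times[\theta_x-\rho_i,\theta_x+\rho_i]$ since $\delta''<\rho_i$. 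As $I_i$ has length $2\rho_i<2\sqrt{\delta_0}<\tfrac{2\pi}{3}$, it contains at most one of the $\theta_k$, so the weakly monotone $\varphi_+^i$ with $\varphi_+^i(\theta_k)=\theta_k$ maps $\pCp\setminus I_i$ onto an arc meeting at least two of the $\theta_k$; hence the complementary arc $\gamma_i':=\Gamma_+\setminus u_i(I_i)$ contains two of the points $\alpha_+(\theta_k)$ and therefore has $\operatorname{diam}\geq d_\Gamma$. Since $\gamma_i'$ is one of the two subarcs of $\Gamma_+$ with endpoints $u_i(a_i),u_i(b_i)$ and $\lvert u_i(a_i)-u_i(b_i)\rvert<\varepsilon'$, it must be the longer one, so $\gamma_i:=u_i(I_i)$ has $\operatorname{diam}<\varepsilon$; as $u_i(x),u_i(y)\in\gamma_i$ we conclude $\lvert u_i(x)-u_i(y)\rvert<\varepsilon$, with $\delta_0$ independent of $i$. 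This is the asserted equicontinuity.

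The only genuinely non-routine point is the preliminary step (b) — that the conformal representatives of the $g_i$ near $\pC$ stay uniformly bi-Lipschitz to a fixed metric. This relies on the upper bound on $\ell_i$ from (a) and on the fact, built into the construction in Section~\ref{sect:diffeos}, that the $h_{b,\phi}$ are purely angular near the boundary with $\theta$-derivative independent of $\phi$ and controlled for $\lvert b^\pm\rvert\leq b_0$. Once this is granted, the Courant--Lebesgue/three-point bookkeeping — in particular choosing $\delta_0$ small enough that $I_i$ cannot contain two of the prescribed points — is entirely standard.
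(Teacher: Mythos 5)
Your proof is correct and follows essentially the same route as the paper: Courant--Lebesgue in a fixed-size boundary chart, the upper bound on $\ell$ coming from the energy lower bound, and control of the three-point spacing via the bound on $\abs{b^\pm}$. The one cosmetic difference is that the paper first pulls back by $h_{b_i,\phi_i}^{-1}$, reducing the metric to $G_{\ell_i}$ at the cost of shifting the three prescribed points to new, uniformly spaced $\tilde\theta_k$, whereas you stay in the original chart and absorb the angular distortion $f'_{b^+,\phi^+}$ into a uniform bi-Lipschitz bound on the conformal representative of the metric near $\pC$ --- dual packagings of exactly the same geometric content.
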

\begin{proof}[Proof of Corollary \ref{lemma:courant-lebesgue-consequence}]
Pulling back the maps and metrics with the diffeomorphism $h_{b_i,\phi_i}^{-1}$ one can reduce this Corollary to the corresponding claim for the metrics $G_{\ell_i}$ and for maps $\tilde u_i$
so that 
the functions 
$\varphi_i^\pm$ that describe the traces
$\tilde u_i\vert _{\pC^\pm}=\alpha^\pm\circ \varphi_i^\pm$
are such that there are points $\tilde \theta_k$, $k=0,1,2$ so that
\beq \label{est:mod-3-point} \varphi(\tilde \theta_k)=\frac{2\pi}{3}k \text{ and } \abs{\tilde \theta_{k+1}-\tilde \theta_k} \geq c, \quad k=0,1,2 \eeq
where $c>0$ depends only on $1-\sup\abs{b^\pm}$ and where $\tilde \theta_3:=\tilde \theta_0+2\pi$.

We then remark that the upper bound on $\ell$ given by \eqref{est:apriori-upper-ell} implies that the metrics induced on the boundary of $(C_0,G_\ell)$ are all equivalent and that the numbers $Y(\ell)$ are bounded away from zero.
Given any point $p=(\pm1, \bar \theta)$ we can thus apply Lemma \ref{lemma:Courant-Lebesgue} on a neighbourhood that is described by 
$\{(\pm 1,\bar \theta)\}\mp D_r^(0)+$ in collar coordinates  for a radius $r$ that depends only on the upper bounds on $\ell$ and $\abs{b^\pm}$, 
namely is chosen so that $r< c/2$, the constant of \eqref{est:mod-3-point}. The proof then follows by a standard argument: 
Given that the parametrisations are weakly monotone and that \eqref{est:mod-3-point} 
does not permit that more than one of the three points $\alpha_\pm(\theta_k)$ is contained in the image of the small arc $(s,\theta)\in \{\pm 1\}\times[\bar \theta-r,\bar \theta+r]$, we then obtain the desired bound on the modulus of continuity from the Courant-Lebesgue Lemma.
\end{proof}

The above lemma implies in particular that any map $u$ that is obtained as weak $H^1$ limit of a sequence of maps $u_i\in \Hgamma$ is again an element of $\Hgamma$.

In order to pass to the limit in the differential inequality \eqref{eq:E-L-eq-minimiser} we use at several points in the paper that the tangent cones $T_u^+\Hgamma$ depend continuously on $u$ namely that

\begin{lemma}
Let $u\in \Hgamma$ and let $u^i\in\Hgamma$ be any sequence that converges strongly in $H^1(C_0)$ to $u$. Then any element
$v\in T^+_u\Hgamma$ can be approximated by elements in $v^i\in T^+_{u^i}\Hgamma$  in the sense that
 $$v^i\to v \text{ strongly in } H^1(C_0).$$
\end{lemma}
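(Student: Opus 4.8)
The plan is to produce the approximating sequence $v^i$ by a direct construction on the boundary, using the structure of elements of $T^+_u\Hgamma$, and then to extend into the interior by harmonic extension (or by any bounded linear extension operator) so that the strong $H^1$ convergence of the traces upgrades to strong $H^1$ convergence on all of $C_0$. Recall that a $v \in T^+_u\Hgamma$ is characterised by $v|_{\pCpm} = \lambda_\pm\,\alpha'_\pm(\varphi_\pm)\cdot(\psi_\pm - \varphi_\pm)$ for some $\lambda_\pm > 0$ and weakly monotone $\psi_\pm \in C^0(S^1,S^1)$ with $\psi_\pm(\theta_k) = \theta_k$, where $u|_{\pCpm} = \alpha_\pm\circ\varphi_\pm$. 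Likewise write $u^i|_{\pCpm} = \alpha_\pm\circ\varphi_\pm^i$. The first step is to observe that, since $u^i \to u$ strongly in $H^1(C_0)$, the traces converge strongly in $H^{1/2}(\pCpm)$, hence (after passing to the weakly monotone reparametrisations, using that the $\alpha_\pm$ are proper $C^3$ parametrisations bounded away from degeneracy) $\varphi_\pm^i \to \varphi_\pm$ uniformly on $S^1$ and strongly in the relevant fractional Sobolev sense. In particular $\varphi_\pm^i(\theta_k) = \theta_k = \varphi_\pm(\theta_k)$ for all $i$, since all the $u^i$ and $u$ satisfy the three-point-condition.

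The key step is then to define the boundary values of $v^i$ by transplanting the same ``target'' reparametrisation $\psi_\pm$ onto $\varphi_\pm^i$: set
\[
v^i|_{\pCpm} := \lambda_\pm\,\alpha'_\pm(\varphi_\pm^i)\cdot(\psi_\pm - \varphi_\pm^i).
\]
By construction $\psi_\pm$ is weakly monotone with $\psi_\pm(\theta_k) = \theta_k$, so this is a legitimate boundary datum for an element of $T^+_{u^i}\Hgamma$: indeed $\varphi_\pm^i + \eps(\psi_\pm - \varphi_\pm^i)$ is weakly monotone for small $\eps \geq 0$ and respects the three-point-condition, so $v^i \in T^+_{u^i}\Hgamma$ once we choose any extension of this boundary datum into the interior that lies in $H^1(C_0)$; the bounded harmonic extension (with respect to a fixed metric in $\Mneu$, all of which are uniformly equivalent) is the convenient choice. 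One must check that the ``target'' functions $\psi_\pm$ are fixed and independent of $i$, which they are — only the ``base point'' $\varphi_\pm^i$ moves.

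It then remains to verify $v^i \to v$ strongly in $H^1(C_0)$. Since the harmonic extension operator is bounded $H^{1/2}(\partial C_0) \to H^1(C_0)$, it suffices to show the boundary values converge strongly in $H^{1/2}(\pCpm)$. Writing the difference $v^i|_{\pCpm} - v|_{\pCpm} = \lambda_\pm[\alpha'_\pm(\varphi_\pm^i) - \alpha'_\pm(\varphi_\pm)](\psi_\pm - \varphi_\pm^i) + \lambda_\pm\,\alpha'_\pm(\varphi_\pm)(\varphi_\pm - \varphi_\pm^i)$, the first term is controlled using that $\alpha_\pm \in C^3$ (so $\alpha'_\pm$ is Lipschitz, even $C^2$) together with the strong $H^{1/2}$-convergence $\varphi_\pm^i \to \varphi_\pm$ and the uniform boundedness of $\psi_\pm - \varphi_\pm^i$; the second term goes to zero directly from $\varphi_\pm^i \to \varphi_\pm$ strongly in $H^{1/2}$. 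The main obstacle I anticipate is the careful handling of the $H^{1/2}$ (rather than merely $L^2$ or uniform) convergence of the reparametrisations and the fact that products of $H^{1/2}(S^1)$ functions with a bounded, weakly monotone, hence $BV$, function need not stay bounded in $H^{1/2}$ in general — one has to exploit that $\psi_\pm$ is \emph{fixed} and that composition with the fixed smooth $\alpha'_\pm$ tames the low-regularity factor, or alternatively argue the whole approximation at the level of the one-dimensional parametrising functions $\varphi_\pm$ and appeal to the fact that $v$ itself is assumed to be in $H^1(C_0)$, which already forces enough regularity of $\psi_\pm$ near the points $\theta_k$ to make the estimates close.
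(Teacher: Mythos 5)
Your proposal is correct and follows essentially the same route as the paper: define $v^i|_{\pCpm}=\lambda_\pm\,\alpha'_\pm(\varphi^i_\pm)(\psi_\pm-\varphi^i_\pm)$ with the same fixed $\psi_\pm,\lambda_\pm$, note that strong $H^1$ convergence of the $u^i$ gives strong $H^{1/2}$ convergence of the traces and hence of these boundary data, and then take harmonic extensions (as in \cite[Lemma 2.1]{D-S}). You simply spell out more carefully the decomposition of $v^i|_{\pCpm}-v|_{\pCpm}$ and flag the product-in-$H^{1/2}$ subtlety that the paper leaves implicit.
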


Indeed, writing $u\vert_\pCpm=\alpha_\pm\circ \varphi_\pm$ respectively $u^i\vert_\pCpm=\alpha_\pm\circ \varphi^i_\pm$ and 
$v\vert_\pCpm=\lambda_{\pm}\cdot \alpha_\pm'(\varphi_\pm)\cdot (\psi_\pm-\varphi_\pm)$  we can use that strong $H^1$ convergence of the maps implies strong $H^{\halb}$ convergence of the traces 
and thus also of $v^i\vert_\pCpm:=\lambda_{\pm}\cdot \alpha_\pm'(\varphi_\pm^i)\cdot (\psi_\pm-\varphi^i_\pm)$ to $v\vert_\pCpm$. The desired 
elements of $T^+_{u_i}\Hgamma$ are then obtained as harmonic extensions of these traces similarly to the proof of Lemma 2.1 in \cite{D-S}.

As a consequence we obtain 
\begin{cor}
Let $(u_i,g_i,f_i)\in \Hgamma\times \Mneu\times L^2(C_0) $ be such that \eqref{eq:E-L-eq-minimiser} is satisfied and assume that $g_i\to g$, $u_i\to u_\infty$ strongly in $H^1(C_0)$ and $f_i\weakto f$ weakly in $L^2$. Then \eqref{eq:E-L-eq-minimiser} is satisfied also for the limit $(u,g,f)$.
\end{cor}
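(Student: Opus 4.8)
The plan is to test the inequality satisfied by $(u_i,g_i,f_i)$ with suitably chosen admissible variations and then pass to the limit term by term. So I would first fix an arbitrary $v\in T^+_{u_\infty}\Hgamma$ and invoke the preceding lemma on the continuous dependence of the tangent cones $T^+_{(\cdot)}\Hgamma$ on the map to obtain a sequence $v_i\in T^+_{u_i}\Hgamma$ with $v_i\to v$ strongly in $H^1(C_0)$. Since each such $v_i$ is admissible in \eqref{eq:E-L-eq-minimiser} for the triple $(u_i,g_i,f_i)$, one has
$$
\int_{C_0}\langle du_i,dv_i\rangle_{g_i}\,dv_{g_i}+\int_{C_0}f_i\cdot v_i\,dv_{g_i}\geq 0
$$
for every $i$, and it then suffices to show that each of the two integrals converges to the corresponding integral for $(u_\infty,g,f)$.

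For the gradient term I would use that the convergence $g_i\to g$ inside the finite-parameter family $\Mneu$ implies, in a fixed coordinate chart on the compact cylinder $C_0$, uniform convergence together with uniform bounds for the volume densities and for the coefficients defining the inner product on $1$-forms; combined with the strong $L^2$ convergences $du_i\to du_\infty$ and $dv_i\to dv$ this gives $\int_{C_0}\langle du_i,dv_i\rangle_{g_i}\,dv_{g_i}\to\int_{C_0}\langle du_\infty,dv\rangle_g\,dv_g$. For the inhomogeneous term I would write $dv_{g_i}=\rho_i\,dx$ and $dv_g=\rho\,dx$ with $\rho_i\to\rho$ uniformly and $\sup_i\norm{\rho_i}_{L^\infty}<\infty$, so that $v_i\rho_i\to v\rho$ strongly in $L^2(C_0)$ because $v_i\to v$ in $L^2$ and the $\rho_i$ are uniformly bounded and converge uniformly; the weak--strong pairing with $f_i\weakto f$ in $L^2$ then yields $\int_{C_0}f_i\cdot v_i\,dv_{g_i}=\int_{C_0}f_i\cdot(v_i\rho_i)\,dx\to\int_{C_0}f\cdot(v\rho)\,dx=\int_{C_0}f\cdot v\,dv_g$.

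Letting $i\to\infty$ in the displayed inequality then gives $\int_{C_0}\langle du_\infty,dv\rangle_g\,dv_g+\int_{C_0}f\cdot v\,dv_g\geq 0$, and since $v\in T^+_{u_\infty}\Hgamma$ was arbitrary this is exactly \eqref{eq:E-L-eq-minimiser} for the limit $(u_\infty,g,f)$. I do not expect a serious obstacle: the only genuinely nontrivial ingredient is the construction of the approximating test functions $v_i$ that still respect the weak monotonicity and the three-point condition, and this is precisely the content of the preceding lemma (obtained via harmonic extensions of the perturbed boundary data, as in \cite[Lemma 2.1]{D-S}); everything else is a routine weak--strong convergence argument on the compact domain $C_0$, using only that $g_i\to g$ gives uniform control of the metric coefficients and volume forms.
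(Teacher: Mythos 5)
Your argument is correct and is precisely the (implicit) argument the paper intends: it states the preceding approximation lemma for the tangent cones $T^+_u\Hgamma$ exactly so that one can produce admissible test functions $v_i\in T^+_{u_i}\Hgamma$ converging to a given $v\in T^+_{u_\infty}\Hgamma$, after which the passage to the limit in \eqref{eq:E-L-eq-minimiser} is the routine weak--strong argument you carry out, using strong $H^1$ convergence of $u_i$ and $v_i$, smooth convergence of the metrics within the finite-dimensional family $\Mneu$, and the weak $L^2$ convergence of $f_i$. The paper simply writes "as a consequence we obtain" without spelling out these steps, so your proof supplies exactly the details that are being taken for granted.
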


We finally outline how Proposition \ref{prop:Duzaar-Scheven} can be derived from the corresponding estimates for maps from the disc proven
by Duzaar and Scheven in \cite[Theorem 8.3]{D-S}
\begin{proof}[Sketch of the proof of Proposition \ref{prop:Duzaar-Scheven}]
Because of the interior estimates of Lemma \ref{lemma:interior-minimiser} it is sufficient to consider points $p$ that are contained in a neighbourhood of the boundary curves $\pCpm$.
We then pull back the maps and metrics by a conformal diffeomorphism (obtained by composing $h_{b,\phi}^{-1}$ with a fixed map) that maps 
a neighbourhood of $\partial D_1\subset (\overline{D_1(0)},g_{eucl}) $
to a neighbourhood $\pCpm\subset C_0$. As the new map $\tilde u$ might no longer satisfy the three-point-condition we then modify
this new triple by pulling-back with the M\"obius transform $M_{b^\pm,\phi^\pm}$ to obtain a new triple $(\tilde u,\tilde g,\tilde f)=\psi^*(u,g,f)$ for which 
equation \eqref{eq:E-L-eq-minimiser} 
is satisfied now for variations supported in a neighbourhood of the corresponding point of the disc.
We remark that the conformal factor of 
$\tilde g=\lambda g_{eucl}$ is bounded uniformly since we have assumed that $1-\abs{b^{ \pm}}$ is bounded away from zero
and since we only consider a neighbourhood of the boundary where the conformal factor $\rho_\ell$ is controlled even if 
$\ell\to 0$.
Given that \eqref{eq:E-L-eq-minimiser} holds true also for 
$(\tilde u, g_{eucl},\tilde f\lambda^2)$ we can then apply Theorem 8.3 of \cite{D-S} to obtain the claimed estimates on balls contained in the Euclidean disc.
Since the uniform control on the metric $G_\ell$ away from the central geodesic 
allows us not only to control the conformal factor (which appears with a different power on the left-hand side of \eqref{est:D-S} than on the right-hand side)
but furthermore means that we can cover each geodesic ball $B_r^g$ by a fixed number of sets $\psi(D_{\tilde r}(p_i))$ for which also the 
image $\psi(D^+_{2\tilde r}(p_i))$ of the corresponding subset of $D_1$ with twice the radius is contained in $B_{2r}^g$, this implies the claim for the original maps.

\end{proof}

\subsection{Properties of hyperbolic collars and the horizontal family of metrics $G_\ell$}\label{appendix:collar}
In this part of the appendix we collect some properties of hyperbolic collars, where we refer to the appendix of \cite{RTZ} and the references therein for more information, 
as well as properties of the hyperbolic cylinders $(C_0,G_\ell)$ that are used throughout the paper. We furthermore give the proof that the family of metrics described in Lemma \ref{lemma:horizontal-family} is horizontal, i.e. that $\tfrac{d}{d\ell} G_\ell\in Re(\Hol(G_\ell)).$

We first recall that for $\de\leq \text{arsinh}(1)$ the $\delta\thin$ part of the hyperbolic cylinder is described in collar coordinates $(s,\theta)\in (-Y(\ell),Y(\ell))$ by 
\beq
\label{subcyl}
(-\min(X_\de(\ell),Y(\ell)),\min(X_\de(\ell),Y(\ell))) \times S^1 ,    
\eeq
where 
\beq \label{eq:Xde}   
X_\de(\ell)=  \frac{2\pi}{\ell}\left(\frac{\pi}{2}-\arcsin \left(\frac{\sinh(\frac{\ell}{2})}{\sinh \delta}\right) \right)\eeq
 for $\de\geq \ell/2$, respectively zero for smaller values of $\delta$.

For the metrics $G_\ell=f_\ell^*(\rho_\ell(ds^2+d\theta^2))$ this means that for each $\delta>0$ there exists a number 
$c_0(\delta)>0$ with $c_0(\delta)\to 0$ for $\delta\to 0$
so that $\delta\thin (C_0,G_\ell)$ is contained in the fixed small cylinder $(-c_0(\delta),c_0(\delta))\times S^1$
with respect to the fixed coordinates $(x,\theta)$ of $C_0$; or, said differently, for every 
$c_1>0$ there exists a number $\delta(c_1)>0$ so that 
$$\inj_{G_\ell}(x,\theta)\geq \delta(c_1)\text{ for all } \abs{x}\geq c_1.$$ In particular, 
the conformal factor $\rho\circ s_\ell$ is bounded away from zero uniformly in $\ell$ for 
$\abs{x}\geq c_1$.

We also use that the norms of $dz^2$ on $([-Y(\ell),Y(\ell)]\times S^1, \rho^2_\ell(ds^2+d\theta^2))$ are given by
\beq \label{eq:Linfty-dz}
\norm{dz^2}_{L^\infty}=\frac{8\pi^2}{\ell^{2}}\eeq
and 
\beq \label{eq:normL2}
\norm{dz^2}_{L^2}^2=\frac{64\pi^4}{\ell^3}\cdot \big[\sin\big(\atan(\eta \ell)\big)\cdot \cos\big(\atan(\eta\ell)\big)+(\tfrac\pi2-\atan(\eta\ell)\big)].
\eeq
For $\ell$ small we thus have that
\beq \label{eq:dz-small-l}
\norm{dz^2}_{L^2}^2=\frac{32\pi^5}{\ell^3}+O(1),\eeq
while for $\ell$ large
\beq \label{eq:dz-large-l}
\norm{dz^2}_{L^2}^2=\frac{1}{\eta^2\ell^4}+O(\ell^{-5})
\eeq

We also recall the well known fact that if a metric $g$ evolves by $\pt g=Re(\Psi)$ for a holomorphic quadratic differential $\Psi$ then 
the length of the central geodesic changes by 
\beq \label{eq:ddl} \frac{d\ell}{dt}=-\frac{2\pi^2}{\ell} Re(c_0),\eeq
where $c_0dz^2$ is the  principal part in the Fourier expansion of $\Psi$, or in our case simply the coefficient in $\Psi=a_0dz^2, a_0\in\R$.

For large values of $\ell$, say $\ell\geq L_0$ we can thus bound the evolution of $\ell$ along a horizontal curve by 
\beq \label{est:evol-l-large}
\abs{\frac{d\ell}{dt}}\leq \frac{2\pi^2}{\ell}\frac{\norm{\partial_t g}_{L^2}}{\norm{dz^2}_{L^2}}\leq C\cdot \ell \norm{\partial_t g}_{L^2}
\eeq
while for small values of $\ell$ we only obtain that 
\beq \label{est:evol-l-small}
\abs{\frac{d\ell}{dt}}\leq C\cdot \ell^{1/2} \norm{\partial_t g}_{L^2}
\eeq
which allows for a degeneration of the metric along a curve of finite length.

Finally we explain how the formula for the horizontal families of metrics in $\M$ claimed in Lemma \ref{lemma:horizontal-family} can be derived. 
\begin{proof}[Proof of Lemma \ref{lemma:horizontal-family}]

Let $t \mapsto g(t)$ be a curve of metrics in $\M$ 
which moves in horizontal direction i.e. so that 
$\frac{d}{dt} g(t)\in \text{Re}(\Hol(C_0,g(t)))$ and so that $g$ is given as 
pull-back of a collar 
$\bigg((-Y(\ell),Y(\ell))\times S^1,\rho_{\ell}(s)^2(ds^2+d\theta^2)\bigg)$ by a suitable diffeomorphism
$f_{\ell}:C_0\to(-Y(\ell),Y(\ell))\times S^1$ where both $Y(\ell)$ and $f_{\ell}$ need to be determined.

To begin with, we derive a differential equation for $Y(\ell)$ by computing the evolution of the \textit{width} 
$$w(\ell(t)):=\dist_{g(t)}(\{-1\}\times S^1, \{1\}\times S^1)$$ 
of the cylinder $(C_0,g(t))$.

Let $t$ be any fixed time and let $(s,\theta)\in [-Y(\ell(t)),Y(\ell(t))]\times S^1$ be the corresponding collar coordinates. 
Then in these fixed coordinates, the evolution of $g$ at time $t$ is given by 
$a_0 (ds^2-d\theta^2)$ where $a_0$ is related to the evolution of the length $\ell$ of the central geodesic by 
\eqref{eq:ddl}.

Thus the width of the collar, which at time $t$ is simply given by the length of the geodesics $s\mapsto (s,\theta_0)$,
evolves according to 
\beqa
\frac{d}{dt}w(\ell(t))&=\frac{d}{dt} \int_{-Y(\ell)}^{Y(\ell)}(g_{ss}(t))^{1/2} ds
=\int_0^{Y(\ell)}(g_{ss}(t))^{-1/2}\cdot  \partial_t g_{ss}(t) ds\\
&=a_0\int_0^{Y(\ell)}\rho_{\ell}^{-1}(s) ds 
=\frac{a_0\cdot 2\pi}{\ell}\int_0^{Y(\ell)}\cos(\frac{\ell}{2\pi}\cdot s)ds\\
&=\big(\frac{2\pi}{\ell}\big)^2 a_0\sin(\frac{\ell}{2\pi}Y(\ell))=-\frac{2}{\ell}\sin(\frac{\ell}{2\pi}Y(\ell))\frac{d\ell}{dt}.
\eeqa
 
For $V(\ell)$ chosen so that $Y(\ell)=\frac{2\pi}{\ell}(\frac\pi2-V(\ell))$ the 
above formula reduces to 
$$\frac{dw}{d\ell}=-\frac{2}{\ell}\cos(V(\ell)).$$

On the other hand, we can directly compute $w(\ell(t))$ by working in collar coordinates of $g(t)$ as
\beqa 
w(\ell)&=2\int_0^{Y(\ell)}\rho_\ell(s)ds=2\int_0^{Y(\ell)}\frac{\ell}{2\pi\cos(\frac{\ell}{2\pi}s)} ds\\
&=2h(\frac{\ell}{2\pi}Y(\ell))=2h(\frac\pi2-V(\ell))
\eeqa
where $h(x):=\log(\tan(\frac{x}2+\frac\pi4))$ is so that $h'(x)=\frac{1}{\cos(x)}$.

Thus 
$$\frac{d w}{d\ell}=-2\frac{1}{\sin(V(\ell))}\cdot \frac{d}{d\ell}(V(\ell))$$ 
meaning that $V$ satisfies
$$\frac{1}{\ell}\cos(V(\ell))=\frac{1}{\sin(V(\ell))}\cdot \frac{d}{d\ell}(V(\ell))$$
or equivalently 
$$\frac{2V'}{\sin(2V)}=\ell^{-1}.$$
Thus $V(\ell)=\atan(c_0\cdot \ell)$ and therefore
$$Y(\ell)=\frac{2\pi}{\ell}\big(\frac{\pi}2-\atan(c_0\ell))$$
for some constant $c_0>0$.

We can argue similarly to derive the formula for the diffeomorphism $f_\ell(x,\theta)=(s_\ell(x),\theta)$.
Namely, we use that $\partial_tg=f_\ell^*(a_0(ds^2-d\theta^2))$
needs to agree with 
$$\partial_tg=\tfrac{d\ell}{dt}\cdot  \tfrac{d}{d\ell} (f_\ell^*(\rho_\ell^2\cdot (ds^2+d\theta^2))=-\tfrac{2\pi^2}{\ell} a_0
\tfrac{d}{d\ell} \big[ \rho_\ell^2(s_\ell(x))\cdot (\big(\tfrac{ \partial s_\ell}{\partial x}\big)^2\cdot dx^2+d\theta^2)\big].$$

Comparing the two expressions for $(\partial_t g)_{\theta\theta}$ immediately yields the condition that 
$$-1=-\frac{2\pi^2}\ell \frac{d}{d\ell}(\rho^2_\ell\circ s_\ell)$$
and thus that for each $x$ there exists a constant $c(x)$
so that $$\tan(\frac{\ell}{2\pi}s_\ell(x))=\frac{c(x)}{\ell}.$$
Finally one can determine $c(x)$ so that $s_{\ell_0}(x)=x$ for the number $\ell_0>0$ for which $Y(\ell_0)=1$ and check that for the resulting map $f_\ell$  also
the two expressions for $(\partial_t g)_{xx}$ agree.
\end{proof}

We furthermore remark that away from the central geodesic $\{0\}\times S^1$ the metrics $G_\ell$ converge locally smoothly 
as $\ell\to 0$ towards a limiting metric $G_0$ which is given by 
\beq \label{eq:G0}
G_0\vert_{C_\pm}=f_{\pm}^*(\rho_0(s)^2(ds^2+d\theta^2)),\eeq 
with 
$f_\pm:C_\pm\to [0,\infty)\times S^1$ given by 
$$f_\pm(x,\theta)=(\lim_{\ell\to 0}Y(\ell)\mp s_\ell(x),\theta)=
   (\frac{2\pi}{\ell_0}\cdot \tan\big(\frac\pi2\mp \frac{\ell_0 x}{2\pi}\big)-2\pi\eta,\theta)$$
and $([0,\infty),\rho_0^2(ds^2+d\theta)^2)$ the hyperbolic cusp described in Theorem \ref{thm2}, case II.

\subsection{Properties of the diffeomorphisms $h_{b,\phi}$}\label{appendix:diffeos}

Here we provide (a sketch of) the proof of the properties of the diffeomorphism $h_{b,\phi}$ introduced in \eqref{def:diffeos}.

\begin{proof}[Proof of Lemma \ref{lemma:variations-parameter}]
We recall that for $x>0$  we can write
$h_{b,\phi}(x,\theta)=(x,\lambda_1(x)f_{b^+}+(1-\lambda_1(x))\theta+\lambda_2(x)\phi^+)$
so that  
$Y_{\phi^+} =\lambda_2(x)\cdot \tfrac{\partial}{\partial \theta}$ is a Killing field on 
$\supp(\lambda_1)$ which implies that 
$$L_{Y_{\abs{b^+}}}G\perp L_{Y_{\phi^+}}G \text{ as well as }  L_{Y_{\Arg{b^+}}}G\perp L_{Y_{\phi^+}}G.$$

To prove the other orthogonality relation we recall that since a different choice of $\phi$ only results in a constant rotation on $\supp(Y_{Arg b})=\supp(Y_{\abs{b}})$ it is enough to consider the case 
$\phi=0$ and that all vectorfields have the form $Y=Y^\theta\cdot \frac{\partial}{\partial \theta}$. We then claim that, writing for short $\psi=Arg(b)$ and $f_b=f_{b,0}$,
$$Y_{Arg(b)}^\theta(h_b(x,\psi+\theta))=Y_{Arg(b)}^\theta(h_b(x,\psi-\theta)) \text{ while } Y_{\abs{b}}^\theta(h_b(x,\psi+\theta))=-Y_{\abs{b}}^\theta(h_b(x,\psi-\theta)).$$
Given that the conformal factor of the collar metric is independent of $\theta$ this then immediately results in the claimed orthogonality of $L_{Y_{Arg(b)}}G$ and $L_{Y_\abs{b}}G$.

To prove the symmetry relation for $Y_{Arg(b)}$ or equivalently for $\tfrac{d}{d Arg(b)} f_b$ we first observe that for 
  $b=a\cdot e^{i\psi}, a\in\R$ we have
$M_{b}(e^{i(\psi+\theta)})=e^{i\psi}M_{a}(e^{i\theta})$
and thus 
\beq 
\label{eq:f-reell-b}
f_{b}(\theta)=\psi+f_{a}(\theta-\psi).\eeq
In particular
$$\frac{d}{d\Arg(b)} f_b(\theta)=1-\partial_\theta f_a(\theta-\psi)$$
where we can compute the derivative on the right hand side by differentiating the relation
$M_a(e^{i\theta})=e^{if_a(\theta)}$ as 
\beqa
\partial_\theta f_a(\theta)=& (iM_a(e^{i\theta}))^{-1}\cdot (\frac{d}{dz}M_a)
(e^{i\theta})\cdot i\cdot e^{i\theta}\\
=&\frac{1-a^2}{(a\cdot cos\theta+1)^2+a^2sin^2(\theta)}.
\eeqa

Thus indeed 
$$
\frac{d}{d\Arg(b)}f_b(\theta+\psi)=1-\partial_\theta f_a(\theta)=1-\partial_\theta f_a(-\theta)=\frac{d}{d\Arg(b)}f_b(\psi-\theta)
$$
which implies the claimed symmetry of $Y_{Arg(b)}$.

On the other hand, again for $a\in\R$
$$\frac{d}{da}f_a(\theta)
=(iM_a(e^{i\theta}))^{-1}\cdot (\frac{d}{da}M_a(e^{i\theta}))=-\frac{2\cdot \sin(\theta)}{(1+a\cos\theta)^2+a^2\sin^2(\theta)}$$
so that for $b=a e^{i\psi}$
$$\frac{d}{d\abs{b}}f_b(\psi+\theta)=\frac{d}{da}f_a(\theta)=-\frac{d}{da}f_a(-\theta)
=-\frac{d}{d\abs{b}}f_b(\psi-\theta)$$
as claimed.

We finally need to prove the estimates for $L_{Y_\phi}G$ and $L_{Y_{\abs{b}}}G$ where we begin with the former for which we can use that $Y_\phi$ has the simple form $Y_\phi=\lambda_2(x)\tfrac{\partial}{\partial \theta}$.

Thus 
$$L_{Y_{\phi^+}}G_{\ell}=\lambda_2'(x)\rho^2(s_\ell(x))\cdot (dx\otimes d\theta+d\theta\otimes dx)$$ and as 
$ \rho_\ell\circ s_\ell$ is bounded uniformly both from above and below on any fixed cylinder $[\delta,1]\times S^1\subset C_0$
for $\ell\in (0,L_0]$ we easily obtain the claimed estimate for $L_{Y_\phi}G$. 

To analyse $L_{Y_\abs{b}}G$ 
we first remark that 
\beqa 
h_{b,\phi}^*G_\ell
&=\rho_\ell^2(s_\ell(x))\cdot \big[\big(\tfrac{\partial s_\ell}{\partial x}\big)^2+\big(\tfrac{\partial h_{b,\phi}^\theta}{\partial x}\big)^2\big]dx^2+\big(\tfrac{\partial h_{b,\phi}^\theta}{\partial x}\big)
\cdot \big(\tfrac{\partial h_{b,\phi}^\theta}{\partial \theta}\big)\cdot (dx\otimes d\theta+d\theta\otimes dx)\\
&\qquad \qquad+\big(\tfrac{\partial h_{b,\phi}^\theta}{\partial \theta}\big)^2d\theta^2
\big]
\eeqa
and that in view of \eqref{eq:f-reell-b}
we only need to consider the case that $b=a\in \R$.

As we only wish to prove a lower bound on 
$\norm{L_{Y_{\abs{b}}}G}_{L^2}$
it is enough to consider the subcylinder
$[\tfrac78,1]\times S^1$ on which the above expression reduces to 
$$h_{a}^*G_\ell=\rho^2(s_\ell(x))\cdot \big[\big(\tfrac{\partial s_\ell}{\partial x}\big)^2dx^2+ \big(\tfrac{\partial f_{a}}{\partial \theta}\big)^2 d\theta^2]$$
so that 
$$\frac{d}{da}(h_a^*G)=2\rho^2(s_\ell(x))\cdot \tfrac{\partial f_a}{\partial  \theta}\cdot \tfrac{d}{da}\tfrac{\partial f_a}{\partial \theta}d\theta^2.$$ 
On this part of the cylinder we furthermore have that $g^{\theta \theta}=\rho^{-2}\circ s_\ell\cdot (\tfrac{\partial f_a}{\partial \theta})^{-2}$
where we recall that $\rho$ is again bounded uniformly from below so that
\beqa \label{est:LY-lower-proof}
\norm{L_{Y_{\abs{b}}}G}_{L^2(C_0,G)}^2=\norm{\tfrac{d}{da}(h_a^*G)}_{L^2(C_0,h_a^*G)} \geq& 
4\int_{[\frac78,1]\times S^1}
\big(\tfrac{\partial f_{a}^\theta}{\partial \theta}\big)^{-2}\abs{\tfrac{d}{da}\tfrac{\partial f_a}{\partial \theta}}^2 dv_g\\
\geq& c\cdot \int_{S^1}\big(\tfrac{\partial f_{a}^\theta}{\partial \theta}\big)^{-1}\abs{\tfrac{d}{da}\tfrac{\partial f_a}{\partial \theta}}^2 d\theta\eeqa
for some fixed constant $c>0$.
Now as $\partial_\theta f_a=\frac{1-a^2}{(1+a\cos\theta)^2+a^2\sin^2\theta}$
we can compute
\beqa
\partial_a\partial_\theta f_a&=-\abs{a e^{i\theta}+1}^{-4}\cdot 
\bigg[2a(1+a\cos\theta)^2+2a^3\sin^2\theta\\
&\qquad\qquad\qquad+
(1-a^2)\big[2\cos(\theta)(1+a\cos\theta)+2a\sin^2\theta\big]\bigg]\\
&=-\abs{ae^{i\theta}+1}^{-4}\cdot 
\big[2a \sin^2\theta+2(1+a\cos\theta)\cdot (a+\cos\theta)\big]\eeqa

We set $\eps=1-a$ and remark that for $\eps$ small and for $\theta$ 
given by $\theta=\pi+\lambda\cdot \eps$
$$[2a \sin^2\theta+2(1+a\cos\theta)\cdot (a+\cos\theta)\big]\geq 2\cdot \big[ \lambda^2\eps^2-\eps^2+O(\eps^3)]$$
so that it is in particular bounded away from $0$ by $2\eps^2$ for angles 
$2\eps\leq \abs{\theta+\pi}\leq 3\eps$.

Combined with \eqref{est:LY-lower-proof} we thus find that 
\beqa 
\norm{L_{Y_{\abs{b}}}G}_{L^2(C_0,G)}^2\geq& 
c\cdot\eps^4 \int_{\pi+2\eps}^{\pi+3\eps} \big(\frac{1-a^2}{\abs{ae^{i\theta}+1}^2}\big)^{-1}\cdot \abs{ae^{i\theta }+1}^{-8} d\theta \geq c\eps^{-2}=\frac{c}{(1-a)^2}
\eeqa
for $1-a$ sufficiently small. This implies the claim of Lemma  \ref{lemma:variations-parameter}.

\end{proof}

\begin{proof}[Sketch of Proof of Lemma \ref{lemma:diffeo-properties-2}]
The property asked for in Lemma \ref{lemma:diffeo-properties-2} is esstentially a consequence of us choosing the diffeomorphisms as restrictions of M\"obius transforms
 onto $S^1$ and the fact that given any two triples $(w_1,w_2,w_3)$ and $(z_1,z_2,z_3)$ of points on $S^1$ there is a unique M\"obiustransform mapping $z_i$ to $w_i$. 
 To be more precise, using the group property of the 
 M\"obius transforms one can reduce the claim of Lemma \ref{lemma:diffeo-properties-2}
 to proving that for any distinct $\vartheta_{1,2,3}\in [0,2\pi)$ and any $a_0\in  [0,1)$ the derivative of the map 
 $(b,\psi)\mapsto (f_{b,\psi}(\vartheta_1), f_{b,\psi}(\vartheta_2), f_{b,\psi}(\vartheta_2))$
 has full rank in the point $(b,\psi)=(a_0,0)\in  \C \times \R$. A short calculation then verifies this claim. 
 \end{proof}

{\sc Mathematisches Institut, Universit\"at Leipzig, Augustusplatz 10, 04109 Leipzig, Germany}

\end{document}